\documentclass{amsart}

\newcommand{\dosomething}[1]{\textbf{[#1]}}
\newcommand{\donothing}[1]{}
\newcommand{\Comment}{\donothing}
\newcommand{\Commentson}{\renewcommand{\Comment}{\dosomething}}

\Commentson

\usepackage[letterpaper,hmargin=1.25in,vmargin=1.22in]{geometry}

\usepackage{amssymb}

\usepackage[all]{xy}
\SilentMatrices %
\SelectTips{cm}{}
\newdir{ >}{{}*!/-5pt/@{>}}
\newdir{ (}{{}*!/-5pt/@^{(}}
\usepackage{tabularx}

\newcommand{\dfn}[1]{\textbf{\boldmath{#1}}}

\newtheorem{de}{Definition}[section]

\newtheorem{prop}[de]{Proposition}
\newtheorem{cor}[de]{Corollary}
\newtheorem{thm}[de]{Theorem}

\newtheorem*{claim*}{Claim}

\theoremstyle{remark}
\newtheorem{aside}[de]{Aside}
\newtheorem{rem}[de]{Remark}
\newtheorem{ex}[de]{Example}

\DeclareMathOperator*{\colim}{colim}
\newcommand{\ra}{\to}
\newcommand{\lra}{\longrightarrow}
\newcommand{\llra}[1]{\stackrel{#1}{\lra}}  %
\DeclareMathOperator{\obj}{Obj}
\DeclareMathOperator{\im}{Im}
\DeclareMathOperator{\Vect}{Vect}
\DeclareMathOperator{\dom}{Dom}

\newcommand{\Diff}{{\mathfrak{D}\mathrm{iff}}}
\newcommand{\Top}{{\mathfrak{T}\mathrm{op}}}

\newcommand{\DS}{{\mathcal{DS}}}
\newcommand{\cC}{{\mathcal{C}}}
\newcommand{\cD}{{\mathcal{D}}}
\newcommand{\cG}{{\mathcal{G}}}
\newcommand{\ddt}{{\textstyle\frac{d}{dt}}}
\newcommand{\VSD}{{\mathcal{VSD}}}
\newcommand{\DVS}{{\mathcal{DVS}}}
\newcommand{\pr}{{\mathrm{pr}}}
\newcommand{\id}{{\mathrm{id}}}

\def \N{\mathbb{N}}
\def \Z{\mathbb{Z}}
\def \Q{\mathbb{Q}}
\def \C{\mathbb{C}}
\def \R{\mathbb{R}}

\usepackage{ifpdf}
\ifpdf  \usepackage[pdftex,bookmarks=false]{hyperref}
\else   \usepackage[hypertex]{hyperref}
\fi

\title{Tangent spaces and tangent bundles \break for diffeological spaces}
\author{J. Daniel Christensen}
\email{jdc@uwo.ca}
\address{Department of Mathematics, University of Western Ontario, London, Ontario, Canada}
\author{Enxin Wu}
\email{enxin.wu@univie.ac.at}
\address{Faculty of Mathematics, University of Vienna, Vienna, Austria}
\date{November 14, 2015}

\begin{document}

\subjclass[2010]{57P99, 58A05.}

\keywords{diffeological space, tangent space, tangent bundle}

\begin{abstract}
We study how the notion of tangent space can be extended from smooth
manifolds to diffeological spaces, which are generalizations of smooth
manifolds that include singular spaces and infinite-dimensional spaces.
We focus on two definitions.
The internal tangent space of a diffeological space is defined using
smooth curves into the space,
and the external tangent space is defined using smooth derivations on germs of smooth functions.
We prove fundamental results about these tangent spaces,
compute them in many examples, and observe
that while they agree for smooth manifolds and many of the examples, they
do not agree in general.

After this, we recall Hector's definition of the tangent bundle of a diffeological space,
and show that both scalar multiplication and addition can fail to be smooth,
revealing errors in several references.
We then give an improved definition of the tangent bundle, using what
we call the dvs diffeology, which ensures that scalar multiplication
and addition are smooth.
We establish basic facts about these tangent bundles, compute
them in many examples, and study the question of whether the fibres
of tangent bundles are fine diffeological vector spaces.

Our examples include singular spaces,
spaces whose natural topology is non-Hausdorff (e.g., irrational tori),
infinite-dimensional vector spaces and diffeological groups,
and spaces of smooth maps between smooth manifolds (including diffeomorphism groups).
\end{abstract}

\maketitle

\tableofcontents

\section{Introduction}

The notion of smooth manifold has been generalized in many ways, some
of which are summarized and compared in~\cite{St}.
Diffeological spaces were introduced by Souriau~\cite{So1,So2}
and are one such generalization, which includes as special cases
manifolds with corners, infinite-dimensional manifolds,
and a wide variety of spaces with complicated local behaviour.
In fact, the collection of diffeological spaces is closed under taking
subsets, quotients and function spaces, and thus gives rise to
a very well-behaved category.
Moreover, the definition of diffeological space is extremely simple,
and we encourage the reader not familiar with the definition to
read Definition~\ref{de:diffeological-space} now.
The standard textbook for diffeological spaces is~\cite{I3},
and we briefly summarize the basic theory in Section~\ref{s:basics}.

Tangent spaces and tangent bundles are important tools for studying
smooth manifolds.
There are many equivalent ways to define the tangent space of a smooth manifold
at a point.
These approaches have been generalized to diffeological spaces by many
authors, including the following.
In the papers that introduced diffeological groups and spaces,
Souriau~\cite{So1,So2} defined tangent spaces for diffeological groups
by identifying smooth curves using certain states.
Hector~\cite{He} defined tangent spaces and tangent bundles for all diffeological
spaces using smooth curves and a more intrinsic identification,
and these were developed further in~\cite{HM,La}.
(We point out some errors in all of~\cite{He,HM,La}, and give
corrected proofs when possible or counterexamples in other cases.)
Vincent~\cite{V} defined tangent spaces for diffeological spaces
by looking at smooth derivations represented by smooth curves,
and built associated tangent bundles using the same construction we use
in this paper.
Iglesias-Zemmour~\cite[6.53]{I3} defined the tangent space to a
diffeological space at a point as a subspace of the dual of the
space of $1$-forms at that point, and used these to define tangent bundles.

In this paper, we begin by studying two approaches to defining
the tangent space of a general diffeological space at a point.
The first is the approach introduced by Hector, which uses smooth curves,
and which we call the internal tangent space (see Subsection~\ref{ss:internal}).
The second is a new approach that uses smooth derivations on germs of smooth real-valued functions,
which we call the external tangent space (see Subsection~\ref{ss:external}).
In these subsections, we prove basic facts about these tangent spaces, such as locality,
and give tools that allow for the computations we do later.
We also show that the internal tangent space respects finite products,
and prove the non-trivial result that the internal tangent space
depends only on the plots of dimension at most $2$,
while the external tangent space depends only on the plots of dimension at most $1$.

In Subsections~\ref{ss:examples}, \ref{ss:dvs} and~\ref{ss:function}, we compute
these two tangent spaces for a diverse set of examples.
We summarize some of the computations in the following table:
\medskip
\begin{center}
\setlength\extrarowheight{1pt}
\renewcommand{\arraystretch}{1.1}
\begin{tabular}{|l|c|c|c|}
\hline
\bf Diffeological space & \bf Base point & \bf Internal & \bf External \\
\hline
discrete diffeological space & any point & $\R^0$ & $\R^0$ \\
\hline
indiscrete diffeological space & any point & $\R^0$ & $\R^0$ \\
\hline
topological space with continuous diffeology & any point & $\R^0$ & $\R^0$ \\
\hline
smooth manifold of dimension $n$ & any point & $\R^n$ & $\R^n$ \\
\hline
axes in $\R^2$ with the pushout diffeology & 0 & $\R^2$ & $\R^2$ \\
\hline
axes in $\R^2$ with the sub-diffeology & 0 & $\R^2$ & $\R^2$ \\
\hline
\parbox[c][2.5em]{2in}{\raggedright three lines intersecting at $0$ in $\R^2$\\[1pt]
with the sub-diffeology} & 0 & $\R^3$ & $\R^3$ \\
\hline
$\R^n$ with wire diffeology ($n \geq 2$) & any point
& \parbox[c][2.5em]{.75in}{\raggedright uncountable\\[-1pt] dimension} & $\R^n$ \\
\hline
$1$-dimensional irrational torus & any point & $\R^{\phantom{1}}$ & $\R^0$ \\
\hline
quotient space $\R^n/O(n)$ & $[0]$ & $\R^0$ & $\R^{\phantom{1}}$ \\
\hline
$[0,\infty)$ with the sub-diffeology of $\R$ & $0$ & $\R^0$ & $\R^{\phantom{1}}$ \\
\hline
vector space $V$ with fine diffeology & any point & $V^{\phantom{1}}$ & \\
\hline
\parbox[c][2.5em]{1.8in}{\raggedright diffeomorphism group of a\\[-1pt]
compact smooth manifold $M$}
& $1_M$ & \parbox[c][2.5em]{.75in}{\raggedright $C^\infty$ vector\\[-1pt] fields on $M$} & \\
\hline
\end{tabular}
\end{center}
\medskip
We see that these two tangent spaces coincide in many cases, including of
course for smooth manifolds, but that they are different in general.
In Subsection~\ref{ss:other-approaches}, we briefly describe some variants
of our definitions that one could also consider.

In Section~\ref{s:bundle}, we study tangent bundles.
Since the internal tangent space has better formal properties, and
we are able to compute it in more examples,
we define our tangent bundle (as a set) to be the disjoint union
of the internal tangent spaces.
Subsection~\ref{ss:dvs} begins by describing the diffeology that Hector
put on this internal tangent bundle~\cite{He},
and then shows that it is not well-behaved in general.
For example, we show in Example~\ref{ex:bundleofcross} that the fibrewise addition and scalar
multiplication maps are not smooth in general, revealing errors in~\cite{He,HM,La}.
We then introduce a refinement of Hector's diffeology, which we call the dvs diffeology,
that avoids these problems.
We also reprove the fact that the internal tangent bundle of a diffeological group
with Hector's diffeology is trivial, since the original proof was partially based on a
false result, and as a result we conclude that Hector's diffeology and the dvs diffeology coincide
in this case.
In Subsection~\ref{ss:conceptual}, we give a conceptual explanation of the
relationship between Hector's diffeology and the dvs diffeology:  they are
colimits, taken in different categories, of the same diagram.

The two diffeologies on the tangent bundle give rise to
diffeologies on each internal tangent space.
In Subsection~\ref{ss:fine}, we study the question of when internal tangent spaces,
equipped with either of these diffeologies,
are fine diffeological vector spaces.  Here the \dfn{fine diffeology} on a vector
space is the smallest diffeology making the addition and scalar multiplication maps smooth.
We show that for many infinite-dimensional spaces,
both Hector's diffeology and the dvs diffeology on the internal tangent spaces are not fine.
On the other hand, we also show that the internal tangent space of any
fine diffeological vector space $V$ at any point
is isomorphic to $V$ as a diffeological vector space.
As a by-product, we show that the inverse function theorem does not hold for general diffeological spaces.

Finally, in Subsection~\ref{ss:function}, we study the internal tangent bundles of function spaces,
and generalize a result in~\cite{He,HM} that says that the internal tangent space of
the diffeomorphism group of a compact smooth manifold at the identity is isomorphic
to the vector space of all smooth vector fields on the manifold.
Again we find that in these cases, Hector's diffeology coincides with the dvs diffeology.

The paper~\cite{CW2} is a sequel to the present paper.  It proves that a
diffeological bundle gives rise to an exact sequence of tangent spaces,
and gives conditions under which Hector's diffeology and the dvs diffeology
on the tangent bundle agree.
\medskip

All smooth manifolds in this paper are assumed to be finite-dimensional,
Hausdorff, second countable and without boundary,
all vector spaces are assumed to be over the field $\R$,
and all linear maps are assumed to be $\R$-linear.

\section{Background on diffeological spaces}\label{s:basics}

We provide a brief overview of diffeological spaces.
All the material in this section can be found
in the standard textbook~\cite{I3}.
For a concise introduction to diffeological spaces, we recommend~\cite{CSW},
particularly Section~2 and the introduction to Section~3.

\begin{de}[\cite{So2}]\label{de:diffeological-space}
A \dfn{diffeological space} is a set $X$
together with a specified set $\cD_X$ of functions $U \ra X$ (called \dfn{plots})
for each open set $U$ in $\R^n$ and for each $n \in \N$,
such that for all open subsets $U \subseteq \R^n$ and $V \subseteq \R^m$:
\begin{enumerate}
\item (Covering) Every constant function $U \ra X$ is a plot;
\item (Smooth Compatibility) If $U \ra X$ is a plot and $V \ra U$ is smooth,
then the composite $V \ra U \ra X$ is also a plot;
\item (Sheaf Condition) If $U=\cup_i U_i$ is an open cover
and $U \ra X$ is a function such that each restriction $U_i \ra X$ is a plot,
then $U \ra X$ is a plot.
\end{enumerate}
We usually use the underlying set $X$ to denote the diffeological space $(X,\cD_X)$.

A function $f:X \rightarrow Y$ between diffeological spaces is
\dfn{smooth} if for every plot $p:U \ra X$ of $X$,
the composite $f \circ p$ is a plot of $Y$.
\end{de}

Write $\Diff$ for the category of diffeological spaces and smooth maps.
Given two diffeological spaces $X$ and $Y$,
we write $C^\infty(X,Y)$ for the set of all smooth maps from $X$ to $Y$.
An isomorphism in $\Diff$ will be called a \dfn{diffeomorphism}.

Every smooth manifold $M$ is canonically a diffeological space with the same
underlying set and plots taken to be all smooth maps $U \ra M$ in the usual sense.
We call this the \dfn{standard diffeology} on $M$, and, unless we say
otherwise, we always equip a smooth manifold with this diffeology.
It is easy to see that smooth maps in the usual sense between
smooth manifolds coincide with smooth maps between them with the standard diffeology.

The set $\cD$ of diffeologies on a fixed set $X$ is ordered by inclusion,
and is a complete lattice.
The largest element in $\cD$ is called the \dfn{indiscrete diffeology} on $X$,
and consists of all functions $U \ra X$.
The smallest element in $\cD$ is called the \dfn{discrete diffeology} on $X$,
and consists of all locally constant functions $U \ra X$.

The smallest diffeology on $X$ containing a set of functions
$\mathcal{A}=\{U_i \ra X\}_{i \in I}$ is called the diffeology \dfn{generated} by $\mathcal{A}$.
It consists of all functions $f:U \ra X$ that locally either factor through
the given functions via smooth maps, or are constant.
The standard diffeology on a smooth manifold is generated by any
smooth atlas on the manifold,
and for every diffeological space $X$, $\cD_X$ is generated by
$\cup_{n \in \N}\, C^\infty(\R^n,X)$.

For a diffeological space $X$ with an equivalence relation~$\sim$,
the smallest diffeology on $X/{\sim}$ making the quotient map $X \twoheadrightarrow X/{\sim}$ smooth
is called the \dfn{quotient diffeology}.
It consists of all functions $U \ra X/{\sim}$ that locally factor through the quotient map.
Using this, we call a smooth map $f:X \to Y$ a \dfn{subduction} if it induces a diffeomorphism
$X/{\sim} \to Y$, where $x \sim x'$ if and only if $f(x)=f(x')$,
and $X/{\sim}$ has the quotient diffeology.

For a diffeological space $Y$ and a subset $A$ of $Y$,
the largest diffeology on $A$ making the inclusion map $A \hookrightarrow Y$ smooth
is called the \dfn{sub-diffeology}.
It consists of all functions $U \ra A$ such that $U \ra A \hookrightarrow Y$ is a plot of $Y$.
Using this, we call a smooth map $f:X \ra Y$ an \dfn{induction} if it induces
a diffeomorphism $X \to \im(f)$, where $\im(f)$ has the sub-diffeology of $Y$.

The category of diffeological spaces is very well-behaved:

\begin{thm}
The category $\Diff$ is complete, cocomplete and cartesian closed.
\end{thm}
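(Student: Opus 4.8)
The plan is to treat completeness and cocompleteness together by exhibiting the forgetful functor $U \colon \Diff \to \Set$ as a topological functor, i.e.\ one admitting initial lifts for all structured sources and, dually, final lifts for all structured sinks. Concretely, given a set $X$ and a family of functions $f_i \colon X \to U(Y_i)$ into diffeological spaces, I would equip $X$ with the \emph{initial diffeology}, whose plots are exactly those $p \colon U \to X$ for which every composite $f_i \circ p$ is a plot of $Y_i$; the three axioms of Definition~\ref{de:diffeological-space} are inherited pointwise from the $Y_i$, and by construction this is the coarsest diffeology making all $f_i$ smooth. Dually, for functions $g_i \colon U(X_i) \to X$ I would take the \emph{final diffeology} generated by all composites $g_i \circ p$ with $p$ a plot of $X_i$. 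Once these lifts are available, every limit in $\Diff$ is computed by forming the limit in $\Set$ and equipping it with the initial diffeology relative to the legs of the limit cone, and every colimit by forming the colimit in $\Set$ and using the final diffeology relative to the legs of the colimit cocone. Since $\Set$ is complete and cocomplete, this yields completeness and cocompleteness, with $U$ creating both.

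These constructions specialize to the familiar ones from Section~\ref{s:basics}: products carry the product diffeology (plots whose projections are all plots), equalizers and general limits carry the sub-diffeology of a product, coproducts carry the disjoint-union diffeology, and coequalizers and general colimits carry the quotient diffeology. Verifying each universal property is then routine, since a cone into the limit is the same as a compatible family of smooth maps, and smoothness into the initial diffeology is precisely the condition that each composite with a leg be smooth; the colimit case is dual.

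For cartesian closedness, completeness already supplies the terminal object (the one-point space) and binary products (the product diffeology), so it remains to produce an internal hom right adjoint to $- \times X$ for each $X$. I would put the \emph{functional diffeology} on the set $C^\infty(X,Y)$, declaring $p \colon U \to C^\infty(X,Y)$ to be a plot precisely when the adjoint map $U \times X \to Y$, $(u,x) \mapsto p(u)(x)$, is smooth; the diffeology axioms follow from the sheaf condition on $Y$ together with the coordinatewise description of product plots. The crux is the exponential law $C^\infty(W \times X, Y) \cong C^\infty(W, C^\infty(X,Y))$, natural in $W$ and $Y$: I would send a smooth $f \colon W \times X \to Y$ to its transpose $\tilde f(w)(x) = f(w,x)$, checking first that each $\tilde f(w)$ is smooth so that $\tilde f$ lands in $C^\infty(X,Y)$, and then that $\tilde f$ is itself smooth by unwinding the functional diffeology. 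The inverse sends $g$ to $(w,x) \mapsto g(w)(x)$, which factors as $g \times \id_X$ followed by the evaluation map $C^\infty(X,Y) \times X \to Y$; the latter is smooth directly from the definition of the functional diffeology.

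I expect the main obstacle to be the exponential law, specifically the smoothness of the transpose $\tilde f$: one must show that for each plot $q \colon U \to W$ the composite $\tilde f \circ q$ is a plot of $C^\infty(X,Y)$, which by definition of the functional diffeology means showing $(u,x) \mapsto f(q(u),x)$ is smooth $U \times X \to Y$, and this relies on the fact that a map out of a product is smooth as soon as its composites with plots of the two factors are. The bookkeeping here, translating repeatedly between plots of function spaces, smoothness of adjoint maps, and the coordinatewise description of product plots, is the delicate part, whereas the initial- and final-lift constructions behind completeness and cocompleteness are essentially formal.
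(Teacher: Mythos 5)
Your proposal is correct, and it follows the standard route: the paper itself states this theorem without proof, deferring to the concrete descriptions of limits, colimits and function spaces in [CSW, Section~2] and the textbook [I3], and those sources establish the result exactly as you do, via initial/final diffeologies (i.e., the forgetful functor to $\Set$ being topological) together with the functional diffeology and the exponential law. The one sentence to phrase carefully is the claim that ``a map out of a product is smooth as soon as its composites with plots of the two factors are'': this is true in the form you need (smoothness of $g : A \times B \to Y$ follows from smoothness of all $g \circ (p \times q)$, since any plot $(p,q) : W \to A \times B$ factors as $(p \times q) \circ \Delta$), but it should not be confused with the false claim that separate smoothness in each variable suffices.
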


The descriptions of limits, colimits and function spaces are quite
simple, and are concisely described in~\cite[Section~2]{CSW}.
We will make use of these concrete descriptions.
Unless we say otherwise, every function space is equipped with the functional diffeology.

\medskip

We can associate to every diffeological space the following interesting topology:

\begin{de}[\cite{I1}]
Given a diffeological space $X$, the final topology induced by its plots,
where each domain is equipped with the standard topology,
is called the \dfn{$D$-topology} on $X$.
\end{de}

In more detail, if $(X, \cD)$ is a diffeological space,
then a subset $A$ of $X$ is open in the $D$-topology of $X$
if and only if $p^{-1}(A)$ is open for each $p \in \cD$.
We call such subsets \dfn{$D$-open}.

A smooth map $X \ra X'$ is continuous when $X$ and $X'$ are equiped with the
$D$-topology, and so this defines a functor $D: \Diff \ra \Top$ to the
category of topological spaces.

\begin{ex}
(1) The $D$-topology on a smooth manifold coincides with the usual topology.

(2) The $D$-topology on a discrete diffeological space is discrete,
and the $D$-topology on an indiscrete diffeological space is indiscrete.
\end{ex}

For more discussion of the $D$-topology of a diffeological space, see~\cite{CSW}.
\medskip

We will make use of the concept of diffeological group at
several points, so we present it here.

\begin{de}\label{def:diff-group}
A \dfn{diffeological group} is a group object in $\Diff$.
That is, a diffeological group is both a diffeological space and a group
such that the group operations are smooth maps.
\end{de}

A smooth manifold of dimension $n$ is formed by gluing together
open subsets of $\R^n$ via diffeomorphisms.
A diffeological space is also formed by gluing together open subsets
of $\R^n$ via smooth maps,
possibly for all $n \in \N$.
To make this precise, let $\DS$ be the category with
objects all open subsets of $\R^n$ for all $n \in \N$
and morphisms smooth maps between them.
Given a diffeological space $X$, we define $\DS/X$ to be the category with
objects all plots of $X$ and morphisms the commutative triangles
\[
\xymatrix@C5pt{U \ar[dr]_p \ar[rr]^f & & V \ar[dl]^q \\ & X, }
\]
with $p,q$ plots of $X$ and $f$ a smooth map.
We call $\DS/X$ the \dfn{category of plots of $X$}. Then we have:

\begin{prop}[{\cite[Proposition~2.7]{CSW}}]\label{colim}
The colimit of the functor $F:\DS/X \ra \Diff$
sending the above commutative triangle to $f:U \ra V$ is $X$.
\end{prop}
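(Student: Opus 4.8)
The plan is to verify the universal property of the colimit directly, rather than computing the underlying set and the diffeology in two separate steps. First I would exhibit the tautological cocone under $F$ with vertex $X$: to each object $p \colon U \ra X$ assign the map $p$ itself, which is smooth as a map of diffeological spaces, since for any plot $\beta$ of $U$ the composite $p \circ \beta$ is a plot of $X$ by Smooth Compatibility. Naturality of this family is precisely the commutativity $q \circ f = p$ defining a morphism $f$ of $\DS/X$, so $\{p\}$ is a cocone. It then suffices to show that every cocone $\{\psi_p \colon U \ra Y\}$ over $F$ factors through it by a unique smooth map $h \colon X \ra Y$.

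The key device is the family of one-point plots. For $x \in X$ let $c_x \colon \R^0 \ra X$ be the constant plot at $x$, which is a plot by the Covering axiom and hence an object of $\DS/X$. Any $h$ satisfying $h \circ p = \psi_p$ for all $p$ must satisfy, on specializing to $p = c_x$, the equation $h(x) = \psi_{c_x}(\ast)$; this forces $h$ on points, giving uniqueness immediately, and I would adopt it as the definition of $h$.

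The substantive point is then to check that $h \circ p = \psi_p$ for every plot $p$, equivalently that $\psi_p(u)$ depends only on the value $p(u)$. Here the morphisms of $\DS/X$ carry the argument: for $u \in U$ with $p(u) = x$, the point-inclusion $\iota_u \colon \R^0 \ra U$ sending the point to $u$ is smooth and satisfies $p \circ \iota_u = c_x$, so it is a morphism $c_x \ra p$ of $\DS/X$, and cocone compatibility gives $\psi_p(u) = \psi_{c_x}(\ast) = h(x) = h(p(u))$. Smoothness of $h$ is then automatic: for any plot $p$ one has $h \circ p = \psi_p$, which is a plot of $Y$ because $\psi_p$ is a smooth map (apply its smoothness to the plot $\id_U$).

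I expect the only genuine subtlety to be this well-definedness step, namely threading two factorizations $p(u) = q(v)$ of a single point through the shared constant plot $c_{p(u)}$; everything else is formal. As an alternative, one could observe that the forgetful functor $\Diff \ra \Set$ is a left adjoint (to the indiscrete-diffeology functor) and hence preserves colimits, compute the colimit set using the same point-inclusions, and then identify the colimit (final) diffeology with $\cD_X$ by noting that it is generated by the composites $p \circ \beta$, each lying in $\cD_X$ by Smooth Compatibility, while conversely $\cD_X$ already makes every plot $p$ smooth. The direct verification above simply bundles these two observations into one.
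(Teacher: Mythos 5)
The paper itself does not prove this proposition: it is imported verbatim from \cite[Proposition~2.7]{CSW}, so there is no in-text argument to compare yours against. Your proof is correct and complete as a direct verification of the universal property. The tautological cocone is indeed a cocone because a morphism of $\DS/X$ is by definition a factorization $q\circ f = p$; the constant plots $c_x:\R^0\ra X$ (legitimate objects of $\DS/X$, since the Covering axiom makes constants plots and the paper's $\N$ includes $0$, as its repeated use of $\R^0$ confirms) force the value $h(x)=\psi_{c_x}(\ast)$ and hence uniqueness; the point-inclusions $\iota_u:\R^0\ra U$, viewed as morphisms $c_x\ra p$ in $\DS/X$, give exactly the well-definedness identity $\psi_p(u)=h(p(u))$; and smoothness of $h$ is immediate from $h\circ p=\psi_p$ together with the fact that $\psi_p$, being smooth out of $U$ with its standard diffeology, is itself a plot of the target. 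Your sketched alternative --- using that the forgetful functor $\Diff\ra\Set$ is a left adjoint to the indiscrete-diffeology functor to compute the underlying set, then matching the final diffeology with $\cD_X$ via Smooth Compatibility and the Sheaf Condition --- is the more common textbook route and makes visible exactly which axioms of Definition~\ref{de:diffeological-space} are used; your unified argument trades that transparency for brevity. Either is acceptable.
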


Given a diffeological space $X$, the category $\DS/X$ can be used to define
geometric structures on $X$.
For example, see~\cite{I3} for a discussion of differential forms and the
de Rham cohomology of a diffeological space.
In this paper, we will use a pointed version of $\DS/X$ to
define the internal tangent space of $X$.

\section{Tangent spaces}

We discuss two approaches to defining the tangent space of a diffeological space at a point:
the internal tangent space introduced by Hector using plots,
and the external tangent space defined using smooth derivations of the
algebra of germs of smooth functions.
We prove basic facts about these tangent spaces in Subsections~\ref{ss:internal}
and~\ref{ss:external}, and then compute them for a wide variety of examples
in Subsection~\ref{ss:examples}.
Although they are isomorphic for smooth manifolds,
we find that the two approaches are different for a general diffeological space;
see Examples~\ref{ex:wirediffeology}, \ref{irrtorus},
\ref{halfline:quotient} and~\ref{halfline:sub}.
In Subsection~\ref{ss:other-approaches}, we mention some other
approaches to defining tangent spaces.

\subsection{Internal tangent spaces}\label{ss:internal}

The internal tangent space of a pointed diffeological space is defined using plots.
It was first introduced in~\cite{He},
and is closely related to the kinematic tangent space of~\cite{KM}
(see Subsection~\ref{ss:other-approaches}).

\medskip

To start, we will define a pointed analog of the category $\DS/X$ of plots
of $X$, introduced just before Proposition~\ref{colim}.
Let $\DS_0$ be the category with
objects all connected open neighbourhoods of $0$ in $\R^n$ for all $n \in \N$
and morphisms the smooth maps between them sending $0$ to $0$.
Given a pointed diffeological space $(X,x)$,
we define $\DS_0/(X,x)$ to be the category
with objects the plots $p:U \ra X$ such that $U$ is connected, $0 \in U$ and $p(0)=x$,
and morphisms the commutative triangles
\[
\xymatrix@C5pt{U \ar[dr]_p \ar[rr]^f & & V \ar[dl]^q \\ & X , }
\]
where $p,q \in \obj(\DS_0/(X,x))$ and $f$ is a smooth map with $f(0)=0$.
We call $\DS_0/(X,x)$ the \dfn{category of plots of $X$ centered at $x$}.
It is the comma category of the natural functor from $\DS_0$ to
$\Diff_{*}$, the category of pointed diffeological spaces.

\begin{de}[\cite{He}]
Let $(X,x)$ be a pointed diffeological space.
The \dfn{internal tangent space} $T_x(X)$ of $X$ at $x$ is
the colimit of the composite of functors
$\DS_0/(X,x) \ra \DS_0 \ra \Vect$,
where $\Vect$ denotes the category of vector spaces and linear maps,
the first functor is the forgetful functor
and the second functor is given by $(f:U \ra V) \mapsto (f_*:T_0(U) \ra T_0(V))$.
Given a plot $p : U \to X$ sending $0$ to $x$ and an element $u \in T_0(U)$,
we write $p_*(u)$ for the element these represent in the colimit.
\end{de}

Let $f:(X,x) \ra (Y,y)$ be a smooth map between pointed diffeological spaces.
Then $f$ induces a functor $\DS_0/(X,x) \ra \DS_0/(Y,y)$.
Therefore, we have a functor $T:\Diff_* \ra \Vect$.
Indeed, the functor $T$ is the left Kan extension of the functor
$\DS_0 \ra \Vect$ sending $f:U \ra V$ to $f_*:T_0(U) \ra T_0(V)$
along the inclusion functor $\DS_0 \ra \Diff_*$.

Note that the category of plots of a diffeological space centered at a point
is usually complicated.
In order to calculate the internal tangent space at a point efficiently,
we need a simpler indexing category.

Let $(X,x)$ be a pointed diffeological space.
We define a category $\cG(X,x)$ whose objects are the objects in $\DS_0/(X,x)$,
and whose morphisms are germs at 0 of morphisms in $\DS_0/(X,x)$.
In more detail, morphisms from $p:(U,0) \ra (X,x)$ to $q:(V,0) \ra (X,x)$ in $\cG(X,x)$
consist of equivalence classes of smooth maps $f : W \to V$,
where $W$ is an open neighborhood of $0$ in $U$ and $p|_W = q \circ f$.
Two such maps are equivalent if they agree on an open neighborhood of $0$ in $U$.
Then there is a functor $\cG(X,x) \ra \Vect$ sending the morphism
\[
\xymatrix@C5pt{U \ar[rr]^{[f]} \ar[dr] && V \ar[dl] \\
                                      & X}
\]
in $\cG(X,x)$ to $f_*:T_0(U) \ra T_0(V)$, and its colimit is $T_x(X)$.
A \dfn{local generating set of $X$ at $x$} is a subset $G$ of $\obj(\cG(X,x))$,
such that for each object $p:(U,0) \ra (X,x)$ in $\cG(X,x)$,
there exist an element $q:(W,0) \ra (X,x)$ in $G$
and a morphism $p \ra q$ in $\cG(X,x)$.
A \dfn{local generating category of $X$ at $x$} is a subcategory $\cG$ of $\cG(X,x)$
such that $\obj(\cG)$ is a local generating set of $X$ at $x$.

\begin{prop}\label{lgcat}
If $\cG$ is a local generating category of $X$ at $x$, then
there is a natural epimorphism from
$\colim (\cG \hookrightarrow \cG(X,x) \ra \Vect)$ to $T_x(X)$.
Moreover, if $\cG$ is a final subcategory of $\cG(X,x)$, then this is an isomorphism.
\end{prop}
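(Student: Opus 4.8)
The plan is to obtain the map from the universal property of the colimit, establish surjectivity using only the local generating hypothesis, and derive the isomorphism statement from the classical fact that final functors preserve colimits.

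First I would fix notation. Write $F:\cG(X,x)\ra\Vect$ for the functor whose colimit is $T_x(X)$, so that $F(p)=T_0(U)$ for a plot $p:(U,0)\ra(X,x)$ and $F([f])=f_*$, and write $\iota:\cG\hookrightarrow\cG(X,x)$ for the inclusion. The colimit cocone $\{\lambda_p:T_0(U)\ra T_x(X)\}$ restricts along $\iota$ to a cocone on $F\circ\iota$, and the universal property of $\colim(F\circ\iota)$ then produces a unique linear map $\Phi:\colim(F\circ\iota)\ra T_x(X)$ satisfying $\Phi\circ\mu_q=\lambda_{\iota(q)}$ for every object $q$ of $\cG$, where the $\mu_q$ are the structure maps of $\colim(F\circ\iota)$. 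This $\Phi$ is the asserted natural map, naturality being built into its defining compatibility with the structure maps.

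To prove $\Phi$ is an epimorphism, I would use that a colimit in $\Vect$ is spanned by the images of its structure maps, so that $T_x(X)$ is spanned by elements $p_*(u)=\lambda_p(u)$ with $p$ an object of $\cG(X,x)$ and $u\in T_0(U)$. For such a generator, the local generating hypothesis yields an object $q:(V,0)\ra(X,x)$ of $\cG$ together with a morphism $[f]:p\ra q$ in $\cG(X,x)$. Functoriality of the colimit cocone gives $\lambda_p=\lambda_q\circ f_*$, whence $p_*(u)=q_*(f_*(u))=\Phi(\mu_q(f_*(u)))$ lies in the image of $\Phi$. Since $\Phi$ is linear and its image contains a spanning set of $T_x(X)$, it is surjective.

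For the isomorphism statement, the point to observe is that the local generating condition says precisely that each comma category $(p\downarrow\iota)$ is nonempty, which is exactly the part of finality responsible for surjectivity, while full finality demands in addition that these comma categories be connected. I would therefore invoke the standard theorem that a final functor induces an isomorphism on colimits (for instance Mac Lane, Categories for the Working Mathematician, IX.3): when $\cG$ is a final subcategory, the canonical map $\Phi$ is an isomorphism. I expect the only real care needed here to be the bookkeeping check that the comparison map built above agrees with the isomorphism supplied by that theorem, rather than any substantive difficulty. As an alternative to citing it, one can verify injectivity by hand, using the connectedness of each $(p\downarrow\iota)$ to show that any relation holding in $T_x(X)$ among elements arising from $\cG$ already holds in $\colim(F\circ\iota)$; this is the harder direction, and amounts to reproving the cited result in the present special case.
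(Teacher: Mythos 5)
Your proposal is correct and follows essentially the same route as the paper: the paper's proof simply states that surjectivity follows from the definition of a local generating category (i.e., the nonemptiness of each comma category $(p\downarrow\iota)$, which you spell out via the spanning generators $p_*(u)$) and that the isomorphism statement follows from Mac Lane's Theorem~IX.3.1 on final functors, exactly the result you cite. Your write-up just makes explicit the universal-property construction of the comparison map and the bookkeeping that the paper leaves implicit.
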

\begin{proof}
The first statement follows from the definition of a local generating category of $X$ at $x$,
and the second statement follows from~\cite[Theorem~IX.3.1]{Mac}.
\end{proof}

A \dfn{local generating set of curves of $X$ at $x$} is
a subset $C$ of $\obj(\cG(X,x)) \cap C^\infty(\R,X)$
such that for each object $p:\R \ra X$ in $\cG(X,x)$,
there exist an element $q:\R \ra X$ in $C$
and a morphism $p \ra q$ in $\cG(X,x)$.
A \dfn{local generating category of curves of $X$ at $x$}
is a subcategory $\cC$ of $\cG(X,x)$
such that $\obj(\cC)$ is a local generating set of curves of $X$ at $x$.

\begin{prop}\label{lgcurve}
If $\cC$ is a local generating category of curves of $X$ at $x$, then
the natural map
$\colim (\cC \hookrightarrow \cG(X,x) \ra \Vect) \ra T_x(X)$
is an epimorphism.
\end{prop}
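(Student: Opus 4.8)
The plan is to show that the natural map $\phi\colon \colim(\cC\hookrightarrow\cG(X,x)\ra\Vect)\ra T_x(X)$ is surjective. Since $\phi$ is linear, its image is a subspace $W\subseteq T_x(X)$, and unwinding the construction of the colimit over the subcategory $\cC$ shows that $W$ is precisely the span of the vectors $q_*(v)$, where $q$ ranges over the objects of $\cC$ (all of which are curves $\R\ra X$ through $x$) and $v$ ranges over $T_0(\R)$. So it suffices to prove that these vectors span $T_x(X)$.

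First I would record that $T_x(X)$ is spanned by the elements $p_*(u)$ over all objects $p\colon(U,0)\ra(X,x)$ of $\cG(X,x)$ and all $u\in T_0(U)$; this is immediate, because a colimit in $\Vect$ is a quotient of the direct sum of the spaces $T_0(U)$. The heart of the argument is then to show that each such generator is already of the form $\gamma_*(\partial/\partial t|_0)$ for a curve $\gamma\colon\R\ra X$ with $\gamma(0)=x$. Given $p$ and $u$, identify $T_0(U)$ with $\R^n$ (where $U\subseteq\R^n$) and choose a smooth curve $c\colon\R\ra U$ with $c(0)=0$ and $c'(0)=u$; one must take a little care to keep the image of $c$ inside the open set $U$ while $c$ is defined on all of $\R$, but a bounded reparametrization of the linear curve $t\mapsto tu$ does the job (the case $u=0$ is trivial, since $p_*(0)=0\in W$). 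Then $\gamma:=p\circ c$ is a curve through $x$, and functoriality gives $\gamma_*(\partial/\partial t|_0)=p_*\bigl(c_*(\partial/\partial t|_0)\bigr)=p_*(u)$. Hence $T_x(X)$ is spanned by the vectors $\gamma_*(\partial/\partial t|_0)$ arising from curves $\gamma\colon\R\ra X$.

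To finish, I would feed each such curve $\gamma$ into the local generating condition for curves: it produces an object $q\colon\R\ra X$ of $\cC$ together with a morphism $[f]\colon\gamma\ra q$ in $\cG(X,x)$. Applying the functor $\cG(X,x)\ra\Vect$ and using that morphisms become identifications in the colimit $T_x(X)$, we obtain $\gamma_*(\partial/\partial t|_0)=q_*\bigl(f_*(\partial/\partial t|_0)\bigr)$, which lies in $W$ because $q$ is an object of $\cC$. Thus $W$ contains a spanning set of $T_x(X)$, so $W=T_x(X)$ and $\phi$ is an epimorphism.

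The main obstacle is the realization step in the second paragraph: promoting an abstract tangent vector $u\in T_0(U)$ to an honest curve $\R\ra X$ through $x$. Nothing deep is required, but one must respect that the generating condition in the statement applies only to curves whose domain is all of $\R$, so the auxiliary curve $c$ has to be globally defined and yet have image contained in the possibly small open neighbourhood $U$. Once curves are known to generate $T_x(X)$, the local generating condition does the rest; and because we are not assuming $\cC$ is final in $\cG(X,x)$, we can only conclude surjectivity and not injectivity, which is exactly why the statement asserts an epimorphism rather than an isomorphism.
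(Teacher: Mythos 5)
Your proof is correct and follows essentially the same route as the paper's: the paper's (very terse) proof consists precisely of the observation that every $u\in T_0(U)$ is realized as $f_*(\frac{d}{dt})$ for a pointed smooth curve $f:(\R,0)\ra(U,0)$, with the reduction to generators and the application of the local generating condition for curves left implicit. You have simply written out those implicit steps, including the (correct) care needed to keep the auxiliary curve globally defined on $\R$ with image in $U$.
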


In particular, every internal tangent vector in $T_x(X)$ is
a linear combination of internal tangent vectors of the form
$p_*(\frac{d}{dt})$, where $p : \R \to X$ is a smooth curve with $p(0) = x$,
and $\frac{d}{dt}$ is the standard unit vector in $T_0(\R)$.

\begin{proof}
This is because for each $u \in T_0(U)$
there exists a pointed smooth map $f:(\R,0) \ra (U,0)$ such that $f_*(\frac{d}{dt})=u$.
\end{proof}

Moreover, the relations between internal tangent vectors are determined by the
two-dimensional plots:

\begin{prop}\label{prop:2plots}
Let $(X,x)$ be a pointed diffeological space.
Let $X'$ be the diffeological space with the same underlying set as $X$,
with diffeology generated by all plots $\R^2 \ra X$. %
Then the identity map $X' \ra X$ is smooth and induces an isomorphism
$T_x(X') \ra T_x(X)$.
\end{prop}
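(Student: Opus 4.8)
The plan is to show that the smooth map $\id\colon X'\ra X$ induces an isomorphism on tangent spaces by constructing an explicit inverse to the induced map $T_x(\id)\colon T_x(X')\ra T_x(X)$. Smoothness of $\id\colon X'\ra X$ is immediate, since the diffeology of $X'$ is generated by plots of $X$ and hence $\cD_{X'}\subseteq\cD_X$. The two facts I would extract first are that every curve $\R\ra X$ is also a plot of $X'$ (compose the projection $\R^2\ra\R$ with the curve to obtain a generating $2$-plot of $X$, then restrict to a coordinate line), and that every plot $\R^2\ra X$ is by definition a plot of $X'$. Combined with Proposition~\ref{lgcurve}, the first fact already shows that $T_x(\id)$ is surjective, since all the curve-generators of $T_x(X)$ lift to $T_x(X')$; the real content is injectivity, which I would get by exhibiting a one-sided (in fact two-sided) inverse.

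To produce the inverse, I would define a cocone $\lambda_p\colon T_0(U)\ra T_x(X')$ on the diagram $\cG(X,x)\ra\Vect$ whose colimit is $T_x(X)$. For a plot $p\colon(U,0)\ra(X,x)$ and $u\in T_0(U)$, set $\lambda_p(u)=(p\circ\gamma_u)_*(\ddt)$, where $\gamma_u(t)=tu$; this makes sense because $p\circ\gamma_u$ is a curve, hence a plot of $X'$. The scaling relation $\lambda_p(au)=a\,\lambda_p(u)$ follows by reparametrising via $t\mapsto at$ (a $1$-dimensional relation), and additivity $\lambda_p(u+w)=\lambda_p(u)+\lambda_p(w)$ follows by feeding $u$, $w$ and $u+w$ through the single $2$-plot $P=p\circ G$ with $G(s,t)=su+tw$ and invoking linearity of $P_*\colon T_0(\R^2)\ra T_x(X')$, which is legitimate precisely because $P$ is a plot of $X'$. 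Thus each $\lambda_p$ is linear.

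The remaining point is the cocone identity $\lambda_q\circ f_*=\lambda_p$ for each morphism $[f]\colon p\ra q$ of $\cG(X,x)$, and I expect this to be the main obstacle. Unwinding the definitions reduces it to the statement: if $\alpha,\beta\colon(\R,0)\ra(V,0)$ are curves into an open $V\subseteq\R^m$ with $\alpha'(0)=\beta'(0)$ and $q\colon V\ra X$ is a plot, then $(q\alpha)_*(\ddt)=(q\beta)_*(\ddt)$ in $T_x(X')$. I would prove this using only plots of dimension $\le 2$, in two moves. First, applying $(q\circ W)_*$ with $W(a,b)=\alpha(a)+\beta(b)$ to $e_1-e_2$ reduces the claim to showing that the velocity-zero curve $\delta(t)=\alpha(t)+\beta(-t)$ satisfies $(q\delta)_*(\ddt)=0$. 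Second, writing $\delta(t)=t^2\eta(t)$ by Hadamard's lemma and factoring $q\circ\delta=(q\circ R)\circ\kappa$ with $R(a,b)=b\,\eta(a)$ and $\kappa(t)=(t,t^2)$, one computes $(q\delta)_*(\ddt)=(qR)_*(e_1)=0$, because $a\mapsto qR(a,0)$ is the constant curve at $x$. Both $q\circ W$ and $q\circ R$ are $2$-plots, hence plots of $X'$, so the entire argument stays inside $X'$, which is exactly why dimension $\le 2$ suffices.

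With the cocone verified, the universal property of the colimit yields a linear map $\Lambda\colon T_x(X)\ra T_x(X')$ with $\Lambda(p_*(u))=\lambda_p(u)$. It then remains to check that $\Lambda$ is a two-sided inverse of $T_x(\id)$: evaluating on curve-generators (which generate both spaces) gives $\Lambda\circ T_x(\id)=\id$ directly, while $T_x(\id)\circ\Lambda=\id$ follows from the relation $(p\circ\gamma_u)_*(\ddt)=p_*(u)$, which already holds in $T_x(X)$ via the morphism $\gamma_u\colon p\circ\gamma_u\ra p$ in $\cG(X,x)$. Hence $T_x(\id)$ is an isomorphism.
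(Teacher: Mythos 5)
Your proof is correct, but it reaches the hard direction (injectivity) by a genuinely different route from the paper's. The paper presents $T_x(X)$ explicitly as a quotient $F/R$ of $\oplus_p T_0(U_p)$ by basic relations $v - g_*(v)$, reduces all relations to ones with domain $\R$, rewrites a general element of $R$ so that each plot of dimension greater than $2$ occurs in at most one term (so that no cancellation can conceal a bad summand), and only then eliminates those terms. You instead build an explicit inverse $\Lambda : T_x(X) \to T_x(X')$ by exhibiting a linear cocone $\lambda_p(u) = (p\circ\gamma_u)_*(\ddt)$ on the diagram $\cG(X,x) \to \Vect$ and invoking the universal property of the colimit, so injectivity comes for free and the multiplicity bookkeeping disappears. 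The geometric inputs are the same in both arguments: additivity of $\lambda_p$ (resp.\ the paper's reduction of relations to one-dimensional domains) comes from pushing two curves through a single two-dimensional plot $(s,t)\mapsto p(su+tw)$, and the vanishing of $(q\circ\delta)_*(\ddt)$ for a velocity-zero curve $\delta(t)=t^2\eta(t)$ comes from factoring through $t\mapsto(t,t^2)$ followed by $(a,b)\mapsto q(b\,\eta(a))$, which is exactly the paper's final factorization $g=f\circ i$. Your version buys conceptual economy and a canonical inverse; the paper's version produces a normal form for elements of $R$ that it refers back to later (e.g.\ when discussing injectivity of the comparison map in Remark~\ref{rem:infinite-product}). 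Two minor points you should make explicit in a final write-up: each auxiliary map ($W$, $G$, $R$) is defined only on a neighbourhood of the origin where it lands in the relevant open set, which is harmless since morphisms in $\cG(X,x)$ are germs; and the cocone maps $\lambda_p$ must be checked to depend only on the germ of the curve $p\circ\gamma_u$ at $0$, which follows from the restriction morphisms in $\cG(X',x)$.
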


\begin{proof}
Since $\R$ is a retract of $\R^2$,
every plot $\R \to X$ factors through a plot $\R^2 \to X$,
and so $X'$ contains the same 1-dimensional plots as $X$.
Therefore, by Proposition~\ref{lgcurve}, the linear map $T_x(X') \ra T_x(X)$ is surjective.

To prove injectivity, we need a more concrete description of the internal tangent
spaces. From the description of $T_x(X)$ as a colimit indexed by the
category $\cG(X,x)$, we can describe $T_x(X)$ as a quotient vector space $F/R$.
Here $F = \oplus_p T_0(U_p)$, where the sum is indexed over plots $p : U_p \to X$
sending $0$ to $x$,
and $R$ is the span of the vectors of the form $(p,v) - (q, g_*(v))$,
where $p : U_p \to X$ and $q : U_q \to X$ are plots sending $0$ to $x$,
$g : (U_p,0) \to (U_q,0)$ is a germ of smooth maps with $p = q \circ g$ as germs at $0$,
$v$ is in $T_0(U_p)$,
$(p,v)$ denotes $v$ in the summand of $F$ indexed by $p$,
and $(q, g_*(v))$ denotes $g_*(v)$ in the summand of $F$ indexed by $q$.
Unless needed for clarity, we will write such a formal difference as simply $v - g_*(v)$,
and not repeat the conditions on $p$, $q$, $g$ and $v$.
We call $v - g_*(v)$ a \dfn{basic} relation, and call $U_p$ the \dfn{domain} of the relation.

Similarly, $T_x(X') = F'/R'$, where $F'$ and $R'$ are defined as above,
but restricting to plots that locally factor through plots of dimension $2$.
In this notation, the natural map $T_x(X') \to T_x(X)$ is induced
by the inclusion $F' \subseteq F$, and the surjectivity of this map
says that every element of $F$ is equal modulo $R$ to an element of $F'$.

As a start to proving injectivity, we first show that the basic relations
$v - g_*(v)$ are generated by those that have 1-dimensional domain $U_p$.
Choose a germ $f : (\R,0) \to (U_p,0)$ so that $f_*(\ddt) = v$.
Then
\[
\begin{aligned}
  & \left[ (p \circ f, -\ddt) - (p, f_*(-\ddt)) \right] + \left[ (q \circ g \circ f, \ddt) - (q, (g \circ f)_*(\ddt)) \right ] \\
 =& \left[ (p \circ f, -\ddt) + (p, v)          \right] + \left[ (p \circ f, \ddt)  - (q, g_*(v))        \right ] \\
 =&\ (p, v) - (q, g_*(v)) ,
\end{aligned}
\qquad\qquad
\vcenter{
\xymatrix@C-14pt@R-6pt{        &  \R \ar[ld]_f \ar[rd]^{g \circ f} & \\
           U_p \ar[dr]_p \ar[rr]^g     &&  U_q \ar[dl]^q  \\
                         &   X & }
}
\]
which shows that our given relation is a sum of relations with
domain $\R$. %
(This argument is similar to that of Proposition~\ref{lgcurve}, and shows
how that argument could be made more formal.)

We now know that a general element of $R$ is of the form
$r = \sum_i v_i - (g_i)_*(v_i)$, where $g_i : \R \to U_{q_i}$.
Next we show that $r$ can be written as a sum of basic
relations such that any plot $q : U_q \to X$ (sending $0$ to $x$) with
$\dim(U_q) > 2$ appears in at most one term of the sum.
Suppose $q$ is such a plot that appears in more than one term of $r$.
Without loss of generality, suppose $g_1$ and $g_2$ are the germs $(\R,0) \to (U_q,0)$
and $v_1$ and $v_2$ are the vectors in $T_0(\R)$ which give two such terms in $r$.
Let $i_1$ and $i_2$ be the inclusions of $\R$ into $\R^2$ as the $x$- and $y$-axes,
and define a germ $g : (\R^2,0) \to (U_q,0)$ by $g(x,y) = g_1(x) + g_2(y)$.
Then we have a commutative diagram
\[
  \xymatrix@R-6pt{ \R \ar[r]^{i_1} \ar[dr]_{g_1} & \R^2 \ar[d]^-g & \R \ar[l]_{i_2} \ar[dl]^{g_2} \\
                                                 &  U_q \ar[d]^q \\
                                                 &  X }
\]
from which it follows that
\[
  [ v_1 - (i_1)_*(v_1) ] + [ v_2 - (i_2)_*(v_2) ] + [ v - g_*(v) ] = [ v_1 - (g_1)_*(v_1) ] + [ v_2 - (g_2)_*(v_2) ] ,
\]
where $v := (i_1)_*(v_1) + (i_2)_*(v_2) \in T_0(\R^2)$.
The first two basic relations on the left-hand-side involve maps $\R \to \R^2$,
while the third involves a map $\R^2 \to U_q$.
Next, as in the previous paragraph, we replace this third basic relation
by two, one using a map $\R \to \R^2$ and the other a map $\R \to U_q$.
The result is that our new set of terms still consists of basic relations
with domain $\R$, but the number of occurrances of the plot $q$ has
been reduced by $1$.
Proceeding in this way, one can ensure that each such $q$ appears at
most once.

Finally, to prove injectivity of the map $F'/R' \to F/R$,
we need to show that every element $r$ of $F' \cap R$ is in $R'$.
By the above, we can write $r$ as a sum $r = \sum_i v_i - (g_i)_*(v_i)$,
where $g_i : \R \to U_{q_i}$ and each $q_i$ with domain of dimension
bigger than $2$ appears in at most one term.
Since $r$ is in $F'$, its component in any summand $T_0(U_q)$ of $F$
must be zero if $q$ does not locally factor through a plot of dimension $2$.
Since such a plot can appear in at most one term $v - g_*(v)$, no cancellation can
occur, so we must have that $g_*(v) = 0$.
So it suffices to show that we can eliminate such terms.
If $v = 0$, then $v - g_*(v) = 0$, so this term can be dropped.
Otherwise, we must have $g(0) = g'(0) = 0$, and so we can write
$g(x) = x^2 h(x)$ for a smooth map $h : \R \to U_q$.
Thus we can factor $g$ as
\[
  \xymatrix@C-10pt@R-6pt{ \R \ar[rr]^g \ar[dr]_-i && U_q \\ & \R^2 \ar[ur]_-f ,\!\!}
\]
where $i(x) = (x, x^2)$ and $f(s,t) = t \cdot h(s)$.
Then $v - g_*(v) = [v - i_*(v)] + [w - f_*(w)]$, where $w = i_*(v) = c \frac{d}{dx}$.
The map $i$ has codomain $\R^2$, so the first basic relation is of the
required form, but we must still deal with the second relation.
Consider the map $i_1 : \R \to \R^2$ sending $x$ to $(x,0)$.
Then $(i_1)_*(c \ddt) = w$, so
$w - f_*(w) = [-c\ddt - (i_1)_*(-c\ddt)] + [c\ddt - (f \circ i_1)_*(c \ddt)]$.
Again, the first basic relation is of the required form.
For the second, note that the map $f \circ i_1$ is the zero map, and so
$c \ddt - (f \circ i_1)_*(c \ddt) = c \ddt = c \ddt - k_*(c \ddt)$,
where $k : \R \ra \R^0$ is the zero map in the category $\cG(X,x)$.

This completes the proof.
\end{proof}

\begin{rem}
While the internal tangent space depends only on the two-dimensional plots, the
diffeology on the internal tangent bundle $TX$ that we define in Section~\ref{s:bundle}
contains all of the information about the diffeology on $X$, since $X$ is a retract of $TX$.
\end{rem}

The internal tangent space is local, in the following sense:

\begin{prop}\label{prop:internallocal}
Let $(X,x)$ be a pointed diffeological space,
and let $A$ be a $D$-open neighborhood of $x$ in $X$.
Equip $A$ with the sub-diffeology of $X$.
Then the natural inclusion map induces an isomorphism $T_x(A) \cong T_x(X)$.
\end{prop}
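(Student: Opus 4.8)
The plan is to work entirely with the colimit description of the internal tangent space over the category $\cG(X,x)$. The inclusion $\iota\colon A \hookrightarrow X$ is smooth for the sub-diffeology, so it induces a functor $G\colon \cG(A,x) \ra \cG(X,x)$ sending a plot $p\colon(U,0)\ra(A,x)$ to $\iota\circ p$, and sending a germ morphism represented by $f$ to the germ morphism represented by the same $f$. Since $G$ leaves the domain $U$ unchanged, the functor $\cG(X,x)\ra\Vect$ whose colimit is $T_x(X)$ restricts along $G$ to the functor $\cG(A,x)\ra\Vect$ whose colimit is $T_x(A)$, and on morphisms both send a germ $[f]$ to the same $f_*$ on $T_0$'s. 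Thus it suffices to show that $G$ induces an isomorphism on colimits, which I would do by proving that $G$ is an equivalence of categories, and in particular a final functor, so that \cite[Theorem~IX.3.1]{Mac} applies exactly as in the proof of Proposition~\ref{lgcat}.

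The essential point, and the only place the hypothesis is used, is essential surjectivity of $G$. Given a plot $p\colon(U,0)\ra(X,x)$, the $D$-openness of $A$ guarantees that $p^{-1}(A)$ is open in $U$; since $p(0)=x\in A$ it contains $0$, so I may choose a connected open neighborhood $W$ of $0$ with $W\subseteq p^{-1}(A)$. Then $p|_W$ factors through $A$ and defines an object $q\colon(W,0)\ra(A,x)$ of $\cG(A,x)$, with $G(q)=p|_W$. I would then record the general fact that restriction to an open neighborhood of $0$ is an isomorphism in $\cG(X,x)$: the germ of $\id_W$ gives a morphism $p\ra p|_W$, the germ of the inclusion $W\hookrightarrow U$ gives a morphism $p|_W\ra p$, and both composites are the identity germ on a sufficiently small neighborhood of $0$. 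Hence $G(q)=p|_W\cong p$, so every object of $\cG(X,x)$ is isomorphic to one in the image of $G$.

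For full faithfulness, I would observe that a morphism $p\ra p'$ in $\cG(A,x)$ is a germ of a smooth map $f\colon W\ra V'$ (with $W\subseteq U$ an open neighborhood of $0$) satisfying $p|_W=p'\circ f$ as maps into $A$, whereas a morphism $\iota p\ra\iota p'$ in $\cG(X,x)$ is a germ of such an $f$ satisfying $(\iota p)|_W=(\iota p')\circ f$ as maps into $X$. Because $\iota$ is injective, these two conditions are literally equivalent, and $p'\circ f$ automatically lands in $A$; so $G$ is a bijection on hom-sets. Combining this with essential surjectivity shows $G$ is an equivalence, hence final, and therefore the natural map $\colim(\cG(A,x)\ra\Vect)\ra\colim(\cG(X,x)\ra\Vect)$ is an isomorphism. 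Since this colimit map is precisely the one induced by $\iota$, it is the map $T_x(A)\ra T_x(X)$ in the statement.

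The main obstacle is the essential surjectivity step, and it is exactly here that $D$-openness is indispensable: without it, $p^{-1}(A)$ need not be open, a plot of $X$ through $x$ need not restrict to any plot of $A$ near $0$, and $G$ would have no candidate preimage. The remaining ingredients — that restriction to a neighborhood of $0$ is a $\cG$-isomorphism, that the hom-sets coincide, and that the equivalence is compatible with the two functors to $\Vect$ so that the induced colimit isomorphism is the map coming from $\iota$ — are routine germ-level verifications that I would carry out but not belabor.
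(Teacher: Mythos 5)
Your proof is correct: the paper's own proof of this proposition is just ``This is clear,'' and your argument is precisely the intended one, made explicit via the germ category $\cG(X,x)$ that the paper introduces for exactly this purpose. The key points --- that $D$-openness makes $p^{-1}(A)$ open so every plot of $X$ through $x$ restricts near $0$ to a plot of $A$, that restriction to a connected open neighbourhood of $0$ is an isomorphism in $\cG(X,x)$, and that injectivity of the inclusion gives full faithfulness, so the functor $\cG(A,x)\ra\cG(X,x)$ is an equivalence and hence final --- are all verified correctly.
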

\begin{proof}
This is clear.
\end{proof}

We now investigate the tangent space of a product of diffeological spaces,
beginning with binary products.

\begin{prop}\label{product}
Let $(X_1,x_1)$ and $(X_2,x_2)$ be two pointed diffeological spaces.
Then there is a natural isomorphism of vector spaces
\[
T_{(x_1,x_2)}(X_1 \times X_2) \cong T_{x_1}(X_1) \times T_{x_2}(X_2).
\]
\end{prop}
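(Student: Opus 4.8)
The plan is to exhibit the isomorphism as a pair of mutually inverse maps and to verify compatibility, leveraging the functoriality of $T$ and the fact that $T$ is a left Kan extension. First I would produce the forward map: the two projections $\pr_i : X_1 \times X_2 \to X_i$ are smooth and pointed, so by functoriality of $T$ they induce linear maps $(\pr_i)_* : T_{(x_1,x_2)}(X_1 \times X_2) \to T_{x_i}(X_i)$, and these assemble into a single linear map
\[
  \Phi := \big( (\pr_1)_*, (\pr_2)_* \big) : T_{(x_1,x_2)}(X_1 \times X_2) \lra T_{x_1}(X_1) \times T_{x_2}(X_2).
\]
For the reverse direction I would use the inclusions $j_1 : X_1 \to X_1 \times X_2$, $x \mapsto (x, x_2)$, and $j_2 : X_2 \to X_1 \times X_2$, $x \mapsto (x_1, x)$, which are again smooth and pointed. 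These give $(j_i)_* : T_{x_i}(X_i) \to T_{(x_1,x_2)}(X_1 \times X_2)$, and I would set
\[
  \Psi(w_1, w_2) := (j_1)_*(w_1) + (j_2)_*(w_2).
\]

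Next I would check that $\Phi \circ \Psi = \id$. Since $\pr_1 \circ j_1 = \id_{X_1}$ and $\pr_1 \circ j_2$ is the constant map at $x_1$ (whose induced map on tangent spaces is zero, as it factors through $\R^0$), and symmetrically for $\pr_2$, functoriality gives $(\pr_i)_* (j_k)_* = \delta_{ik}\, \id$, so $\Phi \circ \Psi$ is the identity. The main work is the opposite composite $\Psi \circ \Phi = \id$, which is where I expect the real obstacle to lie. Using Proposition~\ref{lgcurve}, every tangent vector in the product is a linear combination of vectors $p_*(\ddt)$ for smooth curves $p = (p_1, p_2) : \R \to X_1 \times X_2$ with $p(0) = (x_1, x_2)$, so by linearity it suffices to verify the identity on such generators. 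The key diffeological input is that a plot into $X_1 \times X_2$ is precisely a pair of plots into the factors, so $p = (p_1, p_2)$ with each $p_i(0) = x_i$, and $(\pr_i)_*(p_*(\ddt)) = (p_i)_*(\ddt)$.

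The crux is then the relation
\[
  p_*(\ddt) = (j_1)_* (p_1)_*(\ddt) + (j_2)_* (p_2)_*(\ddt)
\]
inside $T_{(x_1,x_2)}(X_1 \times X_2)$. I would establish this by a two-dimensional plot argument in the spirit of Proposition~\ref{prop:2plots}: define $P : \R^2 \to X_1 \times X_2$ by $P(s,t) = (p_1(s), p_2(t))$, which is a plot since each coordinate is a plot of the respective factor. Let $\Delta : \R \to \R^2$ be the diagonal $s \mapsto (s,s)$ and $i_1, i_2 : \R \to \R^2$ the two axis inclusions; then $P \circ \Delta = p$, while $P \circ i_1 = j_1 \circ p_1$ and $P \circ i_2 = j_2 \circ p_2$. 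Since $\Delta_*(\ddt) = (i_1)_*(\ddt) + (i_2)_*(\ddt)$ in $T_0(\R^2)$, pushing forward along $P$ and using the compatibility relations built into the colimit (i.e.\ $P_* \Delta_* = p_*$, etc.) yields exactly the desired identity. This confirms $\Psi \circ \Phi = \id$ on generators, hence everywhere by linearity. Naturality in $(X_1, x_1)$ and $(X_2, x_2)$ follows formally, since $\Phi$ was built from the functorial maps $(\pr_i)_*$ and commutes with the induced maps on each factor; the only point requiring care throughout is keeping track of which maps induce zero on tangent spaces (the constant-map components), which is handled once by observing that a pointed constant map factors through $\R^0$.
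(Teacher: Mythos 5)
Your proposal is correct and follows essentially the same route as the paper's proof: the same projection/inclusion maps $\alpha$ and $\beta$, the same verification that the constant components induce zero, the same reduction to curve-generators via Proposition~\ref{lgcurve}, and the same key identity $p_*(\ddt) = (i_1)_*(p_1)_*(\ddt) + (i_2)_*(p_2)_*(\ddt)$ established through the plot $(s,t) \mapsto (p_1(s),p_2(t))$ and the decomposition $\Delta_*(\ddt) = (i_x)_*(\ddt) + (i_y)_*(\ddt)$ in $T_0(\R^2)$. No gaps.
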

\begin{proof}
The projections $\pr_j : X_1 \times X_2 \to X_j$, $j = 1, 2$, induce a natural map
\[ \alpha = ((\pr_1)_*, (\pr_2)_*) : T_{(x_1,x_2)}(X_1 \times X_2) \ra T_{x_1}(X_1) \times T_{x_2}(X_2). \]
Define inclusion maps $i_j : X_j \to X_1 \times X_2$ for $j = 1, 2$ by $i_1(y_1) = (y_1, x_2)$ and $i_2(y_2)=(x_1,y_2)$,
and consider the map
\[ \beta = (i_1)_* + (i_2)_* : T_{x_1}(X_1) \times T_{x_2}(X_2) \to T_{(x_1,x_2)}(X_1 \times X_2) \]
sending $(v_1, v_2)$ to $(i_1)_*(v_1) + (i_2)_*(v_2)$.
We claim that these maps are inverse to each other.

To check one of the composites, we first compute
\[
  (\pr_1)_*((i_1)_*(v_1) + (i_2)_*(v_2)) = (\pr_1)_* (i_1)_*(v_1) + (\pr_1)_* (i_2)_*(v_2)
= (\id_{X_1})_* (v_1) + (c_{x_1})_* (v_2) = v_1,
\]
where $c_{x_1} : X_2 \to X_1$ is the constant map at $x_1$ and hence $(c_{x_1})_* (v_2) = 0$.
Similarly, $(\pr_2)_*((i_1)_*(v_1) + (i_2)_*(v_2)) = v_2$, and so
$\alpha \circ \beta$ is the identity.

By Proposition~\ref{lgcurve}, it is enough to check the other composite
on internal tangent vectors of the form $p_*(\ddt)$, where $p = (p_1, p_2) : \R \to X_1 \times X_2$
and $p(0) = (x_1, x_2)$.
We have
\[
  \beta(\alpha(p_*(\ddt))) = (i_1)_* (\pr_1)_* p_*(\ddt) + (i_2)_* (\pr_2)_* p_* (\ddt)
= (i_1)_* (p_1)_* (\ddt) + (i_2)_* (p_2)_* (\ddt) = p_*(\ddt),
\]
where the last equality follows from the diagram
\[
  \xymatrix{
                                  & \R \ar[d]^-{\Delta = (1,1)}   & \\
    \R \ar[r]^-{i_x} \ar[d]_{p_1} & \R^2 \ar[d]^{p_1 \times p_2} & \R \ar[l]_-{i_y} \ar[d]^{p_2} \\
    X_1 \ar[r]_-{i_1}             & X_1 \times X_2               & X_2 , \ar[l]^-{i_2} \\
  }
\]
using that $(p_1 \times p_2) \circ \Delta = p$ and
$(i_x)_*(\ddt) + (i_y)_*(\ddt) = \frac{d}{dx} + \frac{d}{dy} = \Delta_*(\ddt)$.
\end{proof}

\begin{rem}\label{rem:infinite-product}
For an arbitrary product $X = \prod_{j \in J} X_j$ of diffeological spaces
and a point $x = (x_j)$,
there is also a natural linear map $\alpha: T_x(X) \ra \prod_{j \in J} T_{x_j}(X_j)$
induced by the projections.
We will characterize when $\alpha$ is surjective, and show that this
is not always the case.

To do so, we introduce some terminology.
An internal tangent vector of the form $p_*(\ddt)$ is said to be \dfn{representable},
and an internal tangent vector that can be expressed as a sum of $m$ or fewer
representables is said to be \dfn{$m$-representable}.
Recall that by Proposition~\ref{lgcurve},
every internal tangent vector in the domain of $\alpha$ is $m$-representable for some $m$.
Thus a family of internal tangent vectors $(v_j)$ in the image of $\alpha$ must have the property that each
component $v_j$ is $m$-representable for $m$ independent of $j$.
In fact, one can show that the image consists of exactly such families,
and therefore that the map $\alpha$ is surjective if and only if
there is an $m$ in $\N$ such that for all but finitely many $j$ in $J$,
every internal tangent vector in $T_{x_j}(X_j)$ is $m$-representable.
Manifolds and many other diffeological spaces have the property that
every internal tangent vector is $1$-representable (see the following remark),
so $\alpha$ is often surjective.

Here is an example for which $\alpha$ is not surjective.
For each $j \in \N$, consider the diffeological space $X_j$ which is
the quotient of $j$ copies of $\R$ where the origins have been identified
to one point $[0]$.  One can show that $T_{[0]}(X_j) \cong \R^{j}$ and contains internal tangent
vectors which are $j$-representable but not $(j-1)$-representable; see Example~\ref{ex:cross-pushout}.
So, with this family of diffeological spaces, $\alpha$ is not surjective.

We suspect that the map $\alpha$ can fail to be injective as well.  The injectivity
is related to the existence of a global bound on the number of basic relations
(see the proof of Proposition~\ref{prop:2plots}) needed to show that
two representable internal tangent vectors are equal.

See Example~\ref{ex:notfine}(\ref{item:inftyprod}) and Proposition~\ref{prop:isots} (with $X$ discrete)
for non-trivial cases in which $\alpha$ is an isomorphism.
\end{rem}

\begin{rem}\label{rem:representable}
Let $(X,x)$ be a pointed diffeological space.
If every internal tangent vector in $T_x(X)$ is $m$-representable,
then we say that $T_x(X)$ is \dfn{$m$-representable}.
This is the case, for example, when the vector space $T_x(X)$ is $m$-dimensional.
We will show that in fact many diffeological spaces have $1$-representable
internal tangent spaces.

Consider the case where $X$ is a smooth manifold.
Then the internal tangent space agrees with the usual tangent space
defined using curves, so $T_x(X)$ is $1$-representable for any $x$ in $X$.

Remark~\ref{rem:diff-group} shows that the internal tangent space of a
diffeological group at any point is also $1$-representable.
In particular, this holds for a diffeological vector space.
It is easy to see that if $A \subseteq X$ is either $D$-open in $X$ or a
retract of $X$, $x$ is in $A$ and $T_x(X)$ is $m$-representable, then so is $T_x(A)$.
In particular, the proof of Proposition~\ref{prop:cpt} shows that if
$Y$ is a diffeological space with compact $D$-topology
and $N$ is a smooth manifold, then the internal tangent space of
$C^\infty(Y,N)$ at any point is $1$-representable.

Finally, one can also show that the internal tangent space of a homogeneous
diffeological space (see~\cite[Definition~4.33]{CW}) at any point is $1$-representable.
\end{rem}

\subsection{External tangent spaces}\label{ss:external}

In contrast to the internal tangent space, which is defined using plots,
the external tangent space of a pointed diffeological space $(X,x)$
is defined using germs of real-valued smooth functions on $X$.
This is analogous to the operational tangent space defined in~\cite{KM}.

\medskip

Let $G_x(X) = \colim_B C^{\infty}(B, \R)$ be the diffeological space of \dfn{germs of
smooth functions of $X$ at $x$}, where the colimit is taken in $\Diff$,
$B$ runs over all $D$-open subsets of $X$ containing $x$ together with the sub-diffeology,
$C^\infty(B,\R)$ has the functional diffeology,
and the maps in the colimit are restrictions along inclusions.
$G_x(X)$ is a diffeological $\R$-algebra under pointwise addition,
pointwise multiplication and pointwise scalar multiplication,
i.e., all these operations are smooth,
and the evaluation map $G_x(X) \ra \R$ sending $[f]$ to $f(x)$
is a well-defined smooth $\R$-algebra map.

\begin{de}
An \dfn{external tangent vector} on $X$ at $x$
is a smooth
derivation on $G_x(X)$.  That is, it is a smooth
linear map $F:G_x(X) \ra \R$
such that the \dfn{Leibniz rule} holds: $F([f][g])=F([f])g(x)+f(x)F([g])$.
The \dfn{external tangent space} $\hat{T}_xX$ is the set of
all external tangent vectors of $X$ at $x$.
\end{de}

Clearly $\hat{T}_xX$ is a vector space under pointwise addition
and pointwise scalar multiplication.
The Leibniz rule implies that $F([c])=0$ for every external tangent vector
$F$ on $X$ at $x$ and every constant function $c: X \ra \R$.

Let $f:(X,x) \ra (Y,y)$ be a pointed smooth map between two pointed diffeological spaces.
Then $f$ induces a canonical linear map
$f_*:\hat{T}_x(X) \ra \hat{T}_y(Y)$ with $f_*(F)([g])=F([g \circ f])$
for $F \in \hat{T}_x(X)$ and $[g] \in G_y(Y)$,
where $g \circ f$ really means $g \circ f|_{f^{-1}(\dom(g))}$.
This gives a functor $\hat{T}:\Diff_* \ra \Vect$.

\medskip

We next give an equivalent characterization of the external tangent space of a diffeological space.
Recall that $G_x(X)$ is a diffeological $\R$-algebra,
and the evaluation map $G_x(X) \ra \R$ is a smooth $\R$-algebra homomorphism.
Let $I_x(X)$ be the kernel of the evaluation map, equipped with the sub-diffeology,
and let $I_x(X)/I^2_x(X)$ be the quotient vector space with the quotient diffeology.
We define $T'_x(X)=L^\infty(I_x(X)/I^2_x(X),\, \R)$, the set of all smooth linear maps $I_x(X)/I^2_x(X) \ra \R$.
It is a vector space.

\begin{prop}\label{equivexternal}
The map $\alpha:\hat{T}_x(X) \ra T'_x(X)$ defined by $\alpha(F)([f]+I_x^2(X))=F([f])$ is an isomorphism.
\end{prop}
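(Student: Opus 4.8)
The statement is the diffeological incarnation of the classical algebraic fact that derivations at a point are the same as linear functionals on $\mathfrak{m}/\mathfrak{m}^2$. My plan is to write down an explicit candidate inverse $\beta$ and check that $\alpha$ and $\beta$ are mutually inverse linear maps. All of the genuinely algebraic content is formal; the substance of the argument, and where I expect to spend the most care, lies in the smoothness verifications, which rely on the fact that $G_x(X)$ is a diffeological $\R$-algebra and on the correct use of the sub- and quotient diffeologies.

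First I would check that $\alpha$ is well defined. The Leibniz rule shows that any external tangent vector $F$ annihilates $I_x^2(X)$: for germs $[f],[g] \in I_x(X)$ we have $f(x)=g(x)=0$, so $F([f][g]) = F([f])g(x) + f(x)F([g]) = 0$, and $I_x^2(X)$ is spanned by such products. Hence $\alpha(F)$ is a well-defined linear map on $I_x(X)/I_x^2(X)$. For its smoothness, let $\pi : I_x(X) \to I_x(X)/I_x^2(X)$ be the quotient map. Then $\alpha(F) \circ \pi$ equals the restriction of $F$ to $I_x(X)$, which is smooth because $F$ is smooth and $I_x(X)$ carries the sub-diffeology; by the universal property of the quotient diffeology, $\alpha(F)$ is smooth, so $\alpha(F) \in T'_x(X)$.

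Next I would construct the inverse. Given $\ell \in T'_x(X)$, define $\beta(\ell)([h]) = \ell([h - h(x)] + I_x^2(X))$. To see that $\beta(\ell)$ is smooth, I would factor it as $\ell \circ \pi \circ \sigma$, where $\sigma : G_x(X) \ra I_x(X)$ sends $[h]$ to $[h] - h(x)[1]$. Here the map $[h] \mapsto [h] - h(x)[1]$ is smooth into $G_x(X)$ since $G_x(X)$ is a diffeological algebra and the evaluation map is smooth, and its image lies in $I_x(X)$, which has the sub-diffeology, so $\sigma$ is smooth; composing with the smooth maps $\pi$ and $\ell$ gives smoothness of $\beta(\ell)$. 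Linearity is clear. The Leibniz rule for $\beta(\ell)$ comes from the identity
\[
  fg - f(x)g(x) = (f - f(x))(g - g(x)) + f(x)(g - g(x)) + g(x)(f - f(x)),
\]
whose first term represents an element of $I_x^2(X)$; applying $\ell \circ \pi$ to $\sigma([f][g])$ therefore reproduces $f(x)\beta(\ell)([g]) + g(x)\beta(\ell)([f])$, so $\beta(\ell) \in \hat{T}_x(X)$.

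Finally I would verify that $\alpha$ and $\beta$ are mutually inverse. For $\alpha \circ \beta$, note that on a class represented by $[f] \in I_x(X)$ one has $f(x)=0$, so $\alpha(\beta(\ell))([f]+I_x^2(X)) = \ell([f]+I_x^2(X))$, giving $\alpha \circ \beta = \id$. For $\beta \circ \alpha$, I would use the splitting $[h] = [h - h(x)] + h(x)[1]$ together with $F([1]) = 0$, which follows from Leibniz since $F([1]) = F([1][1]) = 2F([1])$; this yields $\beta(\alpha(F))([h]) = F([h - h(x)]) = F([h])$, so $\beta \circ \alpha = \id$. The main obstacle is not the algebra but the smoothness bookkeeping in these two paragraphs: the delicate points are that $\alpha(F)$ descends to a \emph{smooth} map on the quotient and that $\beta(\ell)$ is a \emph{smooth} derivation, both of which hinge on $G_x(X)$ being a diffeological algebra and on the compatibility of the sub- and quotient diffeologies.
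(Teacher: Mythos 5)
Your proposal is correct and follows essentially the same route as the paper, which defines the same inverse $\beta(G)([g]) = G([g]-[g(x)]+I_x^2(X))$ and leaves the smoothness and mutual-inverse checks to the reader. You have simply filled in those details (well-definedness via the Leibniz rule, smoothness via the sub- and quotient diffeologies, and the algebraic identity for the Leibniz rule of $\beta(\ell)$), all of which are accurate.
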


\begin{proof}
One can define $\beta:T'_x(X) \ra \hat{T}_x(X)$ by $\beta(G)([g])=G([g]-[g(0)]+I_x^2(X))$,
where $G \in T'_x(X)$ and $[g] \in G_x(X)$.
It is easy to check that both $\alpha(F):I_x(X)/I_x^2(X) \ra \R$
and $\beta(G):G_x(X) \ra \R$ are smooth,
and that $\alpha$ and $\beta$ are well-defined inverses to each other.
\end{proof}

The external tangent space is also local:

\begin{prop}\label{extlocal}
Let $(X,x)$ be a pointed diffeological space, and
let $A$ be a $D$-open subset of $X$ containing $x$.
Equip $A$ with the sub-diffeology of $X$.
Then the natural inclusion map induces an isomorphism $\hat{T}_x(A) \cong \hat{T}_x(X)$.
\end{prop}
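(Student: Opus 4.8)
The plan is to reduce the statement to an isomorphism of the underlying diffeological $\R$-algebras of germs. Indeed, by definition $\hat{T}_x(X)$ is the set of smooth derivations $G_x(X) \ra \R$ and $\hat{T}_x(A)$ is the set of smooth derivations $G_x(A) \ra \R$. The inclusion $\iota : A \hookrightarrow X$, which is smooth for the sub-diffeology, induces by restriction a map of diffeological $\R$-algebras $\iota^* : G_x(X) \ra G_x(A)$ sending $[f]$ to $[f \circ \iota]$, and the map in the statement is the transpose $\iota_* : \hat{T}_x(A) \ra \hat{T}_x(X)$ given by $\iota_*(F)([g]) = F([g \circ \iota])$. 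So it suffices to show that $\iota^*$ is an isomorphism of diffeological $\R$-algebras; applying the (contravariant) passage to smooth derivations into $\R$ then yields that $\iota_*$ is a linear isomorphism, which is what we want.

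The key geometric input is that a $D$-open subset of $A$ (with the sub-diffeology) containing $x$ is already $D$-open in $X$. First I would record transitivity of the sub-diffeology: since $A$ carries the sub-diffeology of $X$, any $B \subseteq A$ has the same plots whether given the sub-diffeology as a subset of $A$ or as a subset of $X$. Next I would prove the topology fact directly. If $B$ is $D$-open in $A$ and $p : U \ra X$ is any plot of $X$, then $p^{-1}(A)$ is open in $U$ because $A$ is $D$-open in $X$; the restriction $p|_{p^{-1}(A)}$ is a plot of $A$; and therefore $p^{-1}(B) = (p|_{p^{-1}(A)})^{-1}(B)$ is open in $p^{-1}(A)$, hence open in $U$. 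Thus $B$ is $D$-open in $X$. Consequently the $D$-open neighborhoods of $x$ contained in $A$ are exactly the $D$-open neighborhoods of $x$ in $A$, and these are cofinal among all $D$-open neighborhoods of $x$ in $X$, since for any $D$-open $W \ni x$ in $X$ the set $W \cap A$ witnesses the cofinality.

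With this in hand I would identify the two colimits defining $G_x(X)$ and $G_x(A)$. Restricting the colimit diagram for $G_x(X)$ to the cofinal subsystem of $D$-open neighborhoods $B \subseteq A$ does not change the colimit; for each such $B$ the algebra $C^\infty(B,\R)$ is the same whether $B$ is viewed inside $X$ or inside $A$ (transitivity of the sub-diffeology); and this subsystem is precisely the indexing diagram for $G_x(A)$. Hence $\iota^*$ is an isomorphism of diffeological $\R$-algebras. Concretely, surjectivity says that a germ represented by $g : B \ra \R$ with $B$ $D$-open in $A$ is the image of the germ of the same $g$ viewed on the $D$-open-in-$X$ set $B$, and injectivity says that a germ on $X$ whose restriction to $A$ vanishes on some $D$-open $B \subseteq A$ already vanishes on the $D$-open-in-$X$ set $B$.

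I expect the only real content to be the topology fact of the second paragraph; everything else is the formal observation that the external tangent space depends on $X$ only through the diffeological $\R$-algebra $G_x(X)$, together with cofinality of the germ colimit. The one point to handle with a little care is the smoothness of $\iota^*$ and of its inverse, but these follow from the explicit descriptions of the colimit and functional diffeologies once the indexing diagrams have been matched as above.
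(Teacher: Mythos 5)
Your argument is correct and is precisely the reasoning the paper leaves implicit---its own proof of this proposition is simply ``This is clear.'' The key point you isolate (that $D$-open subsets of a $D$-open subspace are $D$-open in the ambient space, so the two germ colimits agree via a cofinal subsystem and hence $G_x(A)\cong G_x(X)$ as diffeological algebras) is exactly the content being taken for granted.
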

\begin{proof}
This is clear.
\end{proof}

Moreover, the external tangent space is determined by the one-dimensional plots:

\begin{prop}\label{prop:1-plots-determine-external}
Let $(X,x)$ be a pointed diffeological space,
and write $X'$ for the set $X$ with the diffeology generated by $C^\infty(\R,X)$.
Then the natural smooth map $X' \ra X$ induces an isomorphism $\hat{T}_x(X') \ra \hat{T}_x(X)$.
\end{prop}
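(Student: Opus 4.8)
The plan is to prove something slightly stronger than the stated isomorphism: that, after identifying the underlying data, $\hat{T}_x(X')$ and $\hat{T}_x(X)$ are literally the \emph{same} set of derivations on a common algebra, and that the map $f_*$ (induced by the smooth identity map $f : X' \ra X$) is the identity. The engine throughout is the fact, a consequence of Boman's theorem, that smoothness of a real-valued map and openness of a subset are \emph{detected by smooth curves}: for any diffeological space $Y$, a map $g : Y \ra \R$ is smooth if and only if $g \circ \gamma$ is smooth for every smooth curve $\gamma : \R \ra Y$ (apply Boman to each $g \circ p$), and, since the standard topology on an open subset of $\R^n$ is the final topology for its smooth curves, a subset $A \subseteq Y$ is $D$-open if and only if $\gamma^{-1}(A)$ is open for every such $\gamma$.

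First I would check that $X$ and $X'$ have exactly the same smooth curves. Every smooth curve $\R \ra X$ is one of the generators of the diffeology of $X'$, hence a plot of $X'$; conversely a plot $\R \ra X'$ is, near each point, either constant or of the form $c \circ h$ with $c$ a generating curve and $h : \R \ra \R$ smooth, so it is locally---hence globally---a plot of $X$. Thus $C^\infty(\R, X') = C^\infty(\R, X)$ as sets of functions, and the same holds for curves with image in any fixed subset $B$ equipped with the sub-diffeology. Feeding this into the two curve criteria gives a chain of coincidences: $X$ and $X'$ have the same $D$-topology, so $G_x(X)$ and $G_x(X')$ are colimits over the same family of $D$-open neighborhoods $B$ of $x$; and on each such $B$ the criterion for smoothness into $\R$, together with the equality of curves into $B$, shows that $C^\infty(B,\R)$ is the same set of functions whether $B$ carries the sub-diffeology from $X$ or from $X'$. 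Taking the colimit in $\Set$ (which the forgetful functor from $\Diff$ preserves) identifies the underlying algebras $G_x(X) = G_x(X')$, and under this identification the map induced by $f$, and hence $f_*$, is the identity.

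The one delicate point---and the step I expect to be the main obstacle---is that one must \emph{not} try to compare the diffeologies on $G_x(X)$ and $G_x(X')$ directly, because the functional diffeology on each $C^\infty(B,\R)$ is built from the product $\R^n \times B$, whose diffeology genuinely differs between $X$ and $X'$ (as $B$ has fewer plots in $X'$). I would circumvent this by again invoking the curve criterion: a derivation $F : G_x \ra \R$ is smooth exactly when $F \circ \gamma$ is smooth for every smooth curve $\gamma$ into $G_x$. Such a $\gamma$ locally factors through some $C^\infty(B,\R)$ via a curve whose adjoint $\R \times B \ra \R$ must be tested for smoothness; applying the curve criterion once more reduces this to composing with curves into $\R \times B$, which are precisely pairs consisting of a curve into $\R$ and a curve into $B$. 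Since curves into $B$ agree for $X$ and $X'$, the smooth curves into $G_x(X)$ and into $G_x(X')$ coincide, and therefore a derivation on the common algebra is smooth with respect to one diffeology exactly when it is with respect to the other. Hence $\hat{T}_x(X') = \hat{T}_x(X)$ and $f_*$ is an isomorphism. This is the external-tangent analog of Proposition~\ref{prop:2plots}, and the drop from two-dimensional to one-dimensional plots reflects that derivations, unlike the relations appearing in the internal colimit, are first-order and so are fully governed by curve data.
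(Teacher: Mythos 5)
Your proof is correct and follows essentially the same route as the paper's: both establish that $X$ and $X'$ have the same smooth curves, hence the same $D$-topology, and then use Boman's theorem twice --- once to show $C^\infty(B,\R) = C^\infty(B',\R)$ as sets, and once to compare the functional diffeologies --- before concluding that the smooth derivations agree. The only cosmetic difference is that the paper shows $1^* : C^\infty(B,\R) \to C^\infty(B',\R)$ is a full diffeomorphism (comparing all plots $U \times B \to \R$), whereas you compare only the smooth curves into the function spaces and then invoke the curve criterion for smoothness of the derivation $F : G_x \to \R$; this is an equivalent use of the same ingredients.
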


\begin{proof}
By~\cite[Theorem~3.7]{CSW}, we know that the $D$-topology on $X'$ coincides with the $D$-topology
on $X$, as they have the same smooth curves. Let $B$ be a $D$-open subset of $X$, equipped 
with the sub-diffeology of $X$, and write $B'$ for the same set equipped with the sub-diffeology 
of $X'$.
The natural smooth map $1:B' \ra B$ induces a smooth map 
$1^*: C^\infty(B,\R) \ra C^\infty(B',\R)$ of $\R$-algebras.
We will show that this is a diffeomorphism, which then implies that $G_x(X) \cong G_x(X')$
and therefore that $\hat{T}_x(X') \cong \hat{T}_x(X)$.
The map $1^*$ is clearly injective.
Surjectivity follows from Boman's theorem~\cite[Corollary~3.14]{KM},
which says that a map $U \to \R$ is smooth if and only if it sends smooth curves to smooth curves,
where $U$ is open in some $\R^n$.
To see that the inverse of $1^*$ is smooth, note that the plots
$U \to C^{\infty}(B, \R)$ correspond to smooth maps $U \times B \to \R$.
Applying Boman's theorem again, we see that these are the same as the
plots of $C^{\infty}(B', \R)$.
\end{proof}

\subsection{Examples and comparisons}\label{ss:examples}

We now calculate the internal and external tangent spaces of
some pointed diffeological spaces.
A table summarizing the results is in the Introduction.

\begin{ex}\label{(in)dis}
(1) Let $X$ be a discrete diffeological space.
Then for each $x \in X$,
$T_x(X)=0$, since the local generating category of $X$ at $x$
with one object $x:\R^0 \ra X$ is final in $\cG(X,x)$.
Also, $\hat{T}_x(X)=0$, since $I_x(X)=0$.

(2) Let $X$ be an indiscrete diffeological space.
Then for each $x \in X$, $T_x(X)=0$, since for any $p:\R \ra X \in \obj(\DS_0/(X,x))$,
there exists $q:\R \ra X \in \obj(\DS_0/(X,x))$ such that $q \circ f=p$,
where $f:\R \ra \R$ is defined by $f(x)=x^3$.
Also, one can show easily that $\hat{T}_x(X)=0$.
\end{ex}

\begin{ex}\label{top}
Let $(X,x)$ be a pointed topological space.
Write $C(X)$ for the diffeological space with underlying set $X$
whose plots are the continuous maps.
Then~\cite[Proposition~4.3]{He} says that $T_x(C(X))=0$.

We now show that $\hat{T}_x(C(X))=0$ as well.
Let $A$ be a $D$-open subset of $C(X)$, equipped with the sub-diffeology,
and fix a smooth map $g : A \to \R$.
It suffices to show that $g$ is locally constant.
Let $p : \R \to A$ be a plot of $A$;
that is, $p$ is a continuous map $\R \to X$ whose image is in $A$.
Since $g$ is smooth, so is $g \circ p : \R \to \R$.
Moreover, for any continuous $h : \R \to \R$, $p \circ h$ is a plot of $A$
and so $g \circ p \circ h$ is smooth as well.
Taking $h(t) = |t|+a$ for each $a \in \R$, for example,
one sees that $g \circ p$ is constant and so $g$ is locally constant.
\end{ex}

\begin{ex}\label{smoothmanifolds}
It is a classical result that for every pointed smooth manifold $(X,x)$,
$T_x(X) \cong \R^n \cong \hat{T}_x(X)$, with $n = \dim(X)$.
In fact, any derivation $F:G_x(X) \ra \R$ is smooth.
This follows from Proposition~\ref{extlocal} and~\cite[Lemma~4.3]{CSW}.
\end{ex}

\begin{ex}\label{ex:cross-pushout}
Let $X$ be the pushout of $\xymatrix{\R & \R^0 \ar[l]_0 \ar[r]^0 & \R}$ in $\Diff$.
The commutative diagram
\[
\xymatrix{\R^0 \ar[r]^0 \ar[d]_0 & \R \ar[d]^{i_2} \\ \R \ar[r]_{i_1} & \R^2}
\]
with $i_1(s)=(s,0)$ and $i_2(t)=(0,t)$ induces a smooth injective map $i:X \ra \R^2$,
and we identify points in $X$ with points in $\R^2$ under the map $i$.
Note that the diffeology on $X$ is different from the sub-diffeology of $\R^2$
(see Example~\ref{ex:cross-sub}),
but the $D$-topology on $X$ is the same as the sub-topology of $\R^2$.
It is not hard to check that
\[
T_x(X) = \begin{cases}\R,   & \text{if $x \neq (0,0)$} \\
                      \R^2, & \text{if $x=(0,0)$.}
         \end{cases}
\]
We claim that the same is true for the external tangent spaces $\hat{T}_x(X)$:
\[
\hat{T}_x(X) = \begin{cases} \R,   & \text{if $x \neq (0,0)$} \\
                             \R^2, & \text{if $x=(0,0)$.}
               \end{cases}
\]
The first equality follows from
Proposition~\ref{extlocal} and Example~\ref{smoothmanifolds}.
For the second equality, we define maps
$a:\hat{T}_{(0,0)}(X) \ra \hat{T}_0(\R) \oplus \hat{T}_0(\R)$ by
$F \mapsto (F_1,F_2)$ with $F_k([f])=F([\tilde{f}_k])$,
where $\tilde{f}_k(x_1,x_2)=f(x_k)$ for $k=1,2$,
and $b:\hat{T}_0(\R) \oplus \hat{T}_0(\R) \ra \hat{T}_{(0,0)}(X)$ by
$(G_1,G_2) \mapsto G$ with $G([g])=G_1([g \circ j_1])+G_2([g \circ j_2])$,
where $j_1,j_2$ are the structural maps from the pushout diagram of $X$.
It is clear that both $a$ and $b$ are well-defined linear maps and
that they are inverses, so the second equality follows.

By the same method, one can show that if $X_j$ is the quotient of $j$ copies of $\R$
with the origins identified to one point $[0]$, then
\[
T_x(X_j)=\hat{T}_x(X_j)=\begin{cases} \R, & \textrm{if $x \neq [0]$} \\ \R^j, & \textrm{if $x=[0]$.} \end{cases}
\]
\end{ex}

\begin{rem}\label{re:cross-pushout}
Note that in the above example $\dim_{(0,0)}(X)=1<2=\dim(T_{(0,0)}(X))$;
see~\cite{I2} for the definition of the dimension of a diffeological space at a point.
In general, unlike smooth manifolds, there is no relationship between the dimension of a diffeological space
at a point and the dimension of its tangent space at that point.
\end{rem}

\begin{ex}\label{ex:cross-sub}
Let $Y=\{(x,y) \in \R^2 \mid xy=0\}$ with the sub-diffeology of $\R^2$.
The map $i$ introduced in Example~\ref{ex:cross-pushout} gives a smooth bijection $X \ra Y$.
However, this map is not a diffeomorphism.
To see this, let $f:\R \ra \R$ be defined by
\[
f(x)=\begin{cases} e^{-\frac{1}{x}}, & \textrm{if $x>0$} \\
                   0,                & \textrm{if $x \leq 0$.}
     \end{cases}
\]
Then $f$ is a smooth function, and $\R \ra \R^2$ defined by $x \mapsto (f(x),f(-x))$ induces a plot of $Y$, but not a plot of $X$.

Although $X$ and $Y$ are not diffeomorphic, their internal and
external tangent spaces at any point are isomorphic.
This is clear away from the origin, so consider
$y=(0,0) \in Y$.
Both \cite[Example~4.4(i)]{He} and \cite[Example~6.2]{HM} claim without proof that
$T_y(Y)=\R^2$.
We sketch a proof here.
The inclusion map $i:Y \ra \R^2$ induces a linear map $i_*:T_y(Y) \ra T_y(\R^2)=\R^2$.
There is also a linear map $f:\R^2 \ra T_y(Y)$ sending $(1,0)$ to $p_*(\frac{d}{dt})$
and sending $(0,1)$ to $q_*(\frac{d}{dt})$,
where $p:\R \ra Y$ is defined by $x \mapsto (x,0)$ and
$q:\R \ra Y$ is defined by $x \mapsto (0,x)$.
It is clear that $i_* \circ f=1_{\R^2}$, so $f$ is injective.
To prove that $f$ is surjective, it is enough to show that
if $r:\R \ra Y$ is a plot sending $0$ to $y$ and
$r$ does not locally factor through $p$ or $q$ near $0$, then $r_*(\frac{d}{dt})=0$.
It is easy to observe that if we write the composite $i \circ r:\R \ra \R^2$
as $(f_1,f_2)$, then $f_1(0)=f_2(0)=0=f_1'(0)=f_2'(0)$,
and hence the conclusion follows from an argument similar to that used in Example~\ref{halfline:sub} below.

Now we claim that $G_y(Y)=G_y(X)$ as diffeological spaces, from which it follows that
$\hat{T}_y(Y) = \hat{T}_y(X)$, which is $\R^2$ by Example~\ref{ex:cross-pushout}.
To see that $G_y(Y)=G_y(X)$, we first note that the $D$-topologies on $X$ and $Y$ are equal,
both being the sub-topology of $\R^2$.
There is a cofinal system of $D$-open neighbourhoods of $y$ each of which is
diffeomorphic to $Y$ (resp.\ $X$) when given the sub-diffeology of $Y$ (resp.\ $X$).
Thus it is enough to show that $C^\infty(Y,\R)=C^\infty(X,\R)$ as diffeological spaces.
There is a canonical smooth injection $i: C^\infty(Y,\R) \ra C^\infty(X,\R)$ induced by the smooth bijection $X \ra Y$.
To show that this is a diffeomorphism, it suffices to show that every plot $V \ra C^\infty(X,\R)$
factors through $i$.
By adjointness, this is equivalent to showing that every smooth map $V \times X \to \R$
factors through the bijection $V \times X \to V \times Y$.
A smooth map $V \times X \to \R$ is the same
as a pair of smooth maps $g, h : V \times \R \to \R$ such that $g(v,0) = h(v,0)$ for all $v \in V$.
Such a map extends to a smooth map $V \times \R^2 \to \R$ which sends
$(v, (x,y))$ to $g(v,x) + h(v,y) - g(v,0)$, and therefore the restriction to $V \times Y$
is smooth.
(See~\cite[Example 12(c)]{V} for a similar argument.)

By the same method, one can show that if $Y_j$ is the union of all coordinate axes in $\R^j$ with the sub-diffeology, then
\[
T_y(Y_j)=\hat{T}_y(Y_j)=\begin{cases} \R, & \textrm{if $y \neq 0$}\\ \R^j, & \textrm{if $y=0$}. \end{cases}
\]
\end{ex}

\begin{ex}\label{ex:axes-in-R3}
Let $A = Y_3$ from the previous example, and
let $B = \{ (x,y) \in \R^2 \mid x = 0 \text{ or}\break y = 0 \text{ or } x = y \}$,
a union of three lines through the origin, with the sub-diffeology of $\R^2$.
We prove below that $A$ and $B$ are diffeomorphic.
This is also proved in~\cite[Example~2.72]{Wa}, using a more complicated argument.
It follows that $T_0(B) = \hat{T}_0(B) = \R^3$,
which shows that the induction $B \hookrightarrow \R^2$ does not induce a
monomorphism under $T_0$ or $\hat{T}_0$.

Consider the smooth function $\R^3 \to \R^2$ sending $(x,y,z)$ to
$(x+z/\sqrt{2},\, y+z/\sqrt{2})$.
This restricts to a smooth bijection $A \to B$ sending
$(x, 0, 0)$ to $(x, 0)$, $(0, y, 0)$ to $(0, y)$, and
$(0, 0, z)$ to $(z, z)/\sqrt{2}$.
Write $f : B \to A$ for its inverse.  We will show that $f$ is smooth.
By Boman's theorem~\cite[Corollary~3.14]{KM},
it is enough to check that for any plot $p : \R \to B$, $f \circ p$ is smooth.
And for this, it is enough to check that if we regard $f \circ p$ as a map
$\R \to \R^3$, all derivatives exist at all points.
This follows from the following claim, where we also regard
the derivatives of $p$ as maps $\R \to \R^2$.
\end{ex}

\begin{claim*}
For each $t$ in $\R$, $p^{(k)}(t)$ is in $B$ and $(f \circ p)^{(k)}(t) = f(p^{(k)}(t))$.
\end{claim*}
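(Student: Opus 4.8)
The plan is to reduce the Claim to a single first-order statement and then bootstrap by induction. Writing $p = (p_1,p_2)$ with $p_1,p_2 : \R \to \R$ smooth, the hypothesis that $p$ lands in $B$ is exactly that the product $p_1 p_2 (p_1 - p_2)$ vanishes identically. I would first record the elementary \emph{flatness lemma}: if smooth functions $g_1,\dots,g_m : \R \to \R$ satisfy $g_1 \cdots g_m \equiv 0$, then at every point $t_0$ at least one factor is flat, i.e.\ all of its derivatives vanish at $t_0$. (If instead every $g_i$ had finite order of vanishing $a_i$ at $t_0$, the leading Taylor terms would multiply to a nonzero monomial of degree $a_1 + \cdots + a_m$, so the product would be nonzero near $t_0$.) Applying this to $g_1 = p_1$, $g_2 = p_2$, $g_3 = p_1 - p_2$ gives the first half of the Claim at once: at each $t_0$ one of these is flat, so for every $k$ either $p_1^{(k)}(t_0) = 0$, or $p_2^{(k)}(t_0) = 0$, or $p_1^{(k)}(t_0) = p_2^{(k)}(t_0)$; that is, $p^{(k)}(t_0) \in B$.

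The second half, $(f \circ p)^{(k)} = f \circ p^{(k)}$, I would deduce from the single statement that for \emph{every} smooth curve $q : \R \to B$ the composite $f \circ q : \R \to \R^3$ is $C^1$ with $(f \circ q)' = f \circ q'$. Granting this, the Claim follows by induction on $k$: since $q = p'$ is again smooth and lands in $B$ by the first half, applying the statement to $p'$ gives $(f \circ p)'' = (f \circ p')' = f \circ p''$, and so on for all orders, making $f \circ p$ smooth.

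For the differentiation statement, a point $t_0$ with $q(t_0) \neq 0$ is easy: there $q$ maps a neighbourhood of $t_0$ into a single one of the three lines $L$, a linear subspace on which $f$ restricts to a linear map, so $f \circ q$ is smooth near $t_0$ and the chain rule gives $(f \circ q)'(t_0) = f(q'(t_0))$. The work is at a point with $q(t_0) = 0$; after translating take $t_0 = 0$, so the goal is $\lim_{h \to 0} f(q(h))/h = f(q'(0))$. Here I would invoke the flatness lemma to fix one of $q_1, q_2, q_1 - q_2$ that is flat at $0$ and split into the resulting three cases (the first two interchanged by the reflection across the diagonal, under which $f$ swaps its first two coordinates). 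In each case $q'(0)$ lies on a definite branch, and I would compare $f(q(h))$ with the smooth $\R^3$-valued function obtained by \emph{pretending} $q$ stays on that branch: a direct estimate shows the two differ coordinatewise by at most a constant times $|g(h)|$, where $g$ is the flat function. Since the comparison function is smooth with derivative $f(q'(0))$ at $0$, and $g(h)/h \to 0$ by flatness, dividing by $h$ yields the desired limit. Finally, $f \circ q'$ is continuous because $f$ is continuous on $B$ by the pasting lemma, so $f \circ q$ is genuinely $C^1$.

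The main obstacle is this origin computation: a priori $q(h)$ may switch among the three branches arbitrarily often as $h \to 0$, so one cannot simply reduce to a single branch. The flatness lemma is exactly what tames this, forcing the two ``wrong'' branches to carry only flat (hence $o(h^\infty)$) contributions, so that $f \circ q$ is pinned to the linear behaviour of the branch through $q'(0)$ to all orders.
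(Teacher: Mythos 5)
Your argument is correct, and its overall architecture matches the paper's: prove a first-order statement ($(f\circ p)'=f\circ p'$ together with $p'(t)\in B$), then iterate, using that each derivative of $p$ is again a smooth curve into $B$. The difference lies in how the hard point is handled. The paper's key trick is the observation that the bijection $A\to B$ is an isometry on each line, so $\|f(p(t+h))\|=\|p(t+h)\|$; this disposes of the difference quotient at a point where $p(t)=p'(t)=0$ in one line, and the remaining case ($p(t)=0$ but $p'(t)\neq 0$) is dismissed as "clear" because $p$ must then stay in a single line near $t$. Your key lemma is instead the flatness lemma for a product $g_1\cdots g_m\equiv 0$, applied to $p_1$, $p_2$, $p_1-p_2$, followed by a comparison of $f(q(h))$ with $f$ of the projection of $q$ onto the branch containing $q'(0)$; the error is controlled by the flat factor, hence is $o(h^\infty)$. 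Both are sound (I checked your branch-comparison estimate in all three cases; the constant $\sqrt{3}$ suffices). What your route buys: the flatness lemma gives an immediate and fully rigorous proof of the first half of the claim ($p^{(k)}(t)\in B$ for all $k$), it handles the origin case uniformly without splitting on whether $q'(t_0)$ vanishes, and it also supplies the justification that the paper leaves implicit for why $p$ stays in one line when $p(t)=0$ but $p'(t)\neq 0$. What the paper's route buys is brevity: the isometry identity collapses the only genuinely singular case to a one-line norm estimate, at the cost of being special to this particular $f$.
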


\begin{proof}
We first prove this for $k = 1$.
If $p(t) \neq (0,0)$ or $p'(t) \neq (0,0)$, then this is clear,
since, near $t$, $p$ must stay within one of the three lines.

Now suppose $p(t) = (0,0) = p'(t)$.  Then
\[
  (0,0) = p'(t) = \lim_{h \to 0} \frac{p(t+h)-p(t)}{h} = \lim_{h \to 0} \frac{p(t+h)}{h} .
\]
Therefore, if the limits exist, we have
\[
  (f \circ p)'(t) = \lim_{h \to 0} \frac{f(p(t+h)) - f(p(t))}{h} = \lim_{h \to 0} \frac{f(p(t+h))}{h} .
\]
But $\|f(p(t+h))\| = \|p(t+h)\|$, %
and so the last limit exists and equals $(0,0,0)$, which is $f(p'(t))$.

For general $k$, we have
$(f \circ p)^{(k)}(t) = (f \circ p')^{(k-1)}(t) = \cdots = (f \circ p^{(k)})(t)$, as required.
\end{proof}

\begin{rem}\label{rem:comparison-map}
Given a pointed diffeological space $(X,x)$,
there is a natural morphism $\beta:T_x(X) \ra \hat{T}_x(X)$
which is defined on the generators by
$\beta(p_*(u))([f])=u([f \circ p])$,
where $u$ is in $T_0(U) \cong \hat{T}_0(U)$,
$p:U \ra X$ is a plot of $X$ with $U$ connected,
$0 \in U$ and $p(0)=x$, and $[f]$ is in $G_x(X)$.
It is straightforward to show that $\beta(p_*(u)):G_x(X) \ra \R$ is smooth.
By the definition of push-forwards of tangent vectors on smooth manifolds,
it is clear that $\beta$ is well-defined.
Then we can linearly extend it to be defined on $T_x(X)$.
Clearly $\beta$ is linear,
and $\beta$ induces a natural transformation $T \ra \hat{T}:\Diff_* \ra \Vect$.
However in general, $\beta$ is neither injective nor surjective; see the following examples.
\end{rem}

\begin{ex}\label{ex:wirediffeology}
(1)
Let $X$ be $\R^n$ equipped with the diffeology generated by the set
$C^\infty(\R,\R^n)$ of smooth curves.
This is sometimes called the ``spaghetti'' or ``wire'' diffeology.
It is clear that $X$ is not diffeomorphic to $\R^n$ with the standard diffeology,
if $n \geq 2$.
By Proposition~\ref{prop:1-plots-determine-external}, we know that
$\hat{T}_x(X) \cong \hat{T}_x(\R^n) \cong \R^n$ for any $x \in X$.

On the other hand, one can show that in this case the relations between
generating internal tangent vectors are determined by smooth curves in $X$,
and hence that $T_x(X)$ has uncountable dimension when $n \geq 2$.
For example, the internal tangent vectors $(p_{\alpha})_*(\frac{d}{dt})$
for $\alpha \in \R$ are all linearly independent, where $p_{\alpha} : \R \to X$
sends $x$ to $(x, \alpha x)$.

This example shows that the internal tangent space of
a diffeological space at a point is not determined by the plots of curves.

(2) Let $Y$ be $\R^n$ equipped with the diffeology generated by the set
$C^\infty(\R^2,\R^n)$ of smooth planes.
(One might call this the ``lasagna'' diffeology.)
Then $Y$ is neither diffeomorphic to $X$ in (1) if $n \geq 2$,
nor diffeomorphic to $\R^n$ with the standard diffeology if $n \geq 3$.
But by Proposition~\ref{prop:2plots} and (1) above,
$T_y(Y) \cong \R^n \cong \hat{T}_y(Y)$ for any $y \in Y$.
\end{ex}

\begin{ex}\label{irrtorus}
Let $T^2=\R^2/\Z^2$ be the usual $2$-torus,
and let $\R_\theta$ be the image of the line $\{ y = \theta x \}$
under the quotient map $\R^2 \ra T^2$, with $\theta$ a fixed irrational number.
Note that $T^2$ is an abelian Lie group,
and $\R_\theta$ is a subgroup.
The quotient group $T^2/\R_\theta$ with the quotient diffeology
is called the \dfn{$1$-dimensional irrational torus of slope $\theta$}.
One can show that $T^2/\R_\theta$ is diffeomorphic to $T^2_{\theta} := \R/(\Z+\theta \Z)$
with the quotient diffeology; see~\cite[Exercise~31 on page~31]{I3}.

Let $x$ be the identity element in $T^2_\theta$.
Since the $D$-topology on $T^2_\theta$ is indiscrete,
the only smooth maps $T^2_\theta \ra \R$ are the constant maps,
which implies that $\hat{T}_x(T^2_{\theta})=0$.
This was also observed in~\cite[Example~4.2]{He}.

On the other hand, we claim that $T_x(T^2_{\theta}) = \R$.
Given a commutative solid square
\[
  \xymatrix@R13pt@C13pt{& U \ar[dl]_f \ar[dr]^g \\ \R \ar[dr]_\pi && \R \ar[dl]^\pi \\ & T^2_\theta}
\]
with $U$ a connected open subset of $\R^n$, $f$ and $g$ smooth maps, and $\pi$ the quotient map,
the difference $f - g$ is a continuous function landing in $\Z + \theta \Z$, and thus is constant.
In particular, if $0 \in U$ and $f(0) = 0 = g(0)$, then $f = g$.
Moreover, since plots $U \to T^2_{\theta}$ locally lift to $\R$, they lift as germs.
(In fact, the uniqueness implies that they lift globally.)
This shows that $\pi$ is a terminal object of $\cG(T^2_{\theta},x)$
and therefore that $T_x(T^2_{\theta}) \cong T_0(\R) \cong \R$.
See~\cite[Example~4.2]{He} for a geometric explanation of this result.
\end{ex}

\begin{ex}\label{halfline:quotient}
Let $O(n)$ act on $\R^n$ in the usual way.
Then the orbit space $H_n$ with the quotient diffeology bijects naturally with
the half line $[0,\infty)$,
but $H_n$ and $[0,\infty)$ with the sub-diffeology of $\R$ are not diffeomorphic,
nor are $H_n$ and $H_m$ for $n \neq m$ (see~\cite{I2}).
By Proposition~\ref{lgcurve}, it is easy to see that $T_{[0]}(H_n)=0$.

We claim that $\hat{T}_{[0]}(H_n)=\R$.
To prove this, observe that the natural map
\[ C^\infty(H_n, \R) \lra \{ f \in C^\infty(\R^n,\R) \mid f \text{ is $O(n)$-invariant} \} \]
is a diffeomorphism, where the right-hand-side is equipped with the sub-diffeology of $C^\infty(\R^n,\R)$.
This follows from the fact that the functor $U \times -$ is a left adjoint
for any open subset $U$ of a Euclidean space, and
therefore commutes with quotients.
Next, consider the natural smooth map
\[ \phi : \{ f \in C^\infty(\R^n, \R) \mid f \text{ is $O(n)$-invariant} \} \lra
          \{ g \in C^\infty(\R,   \R) \mid g \text{ is even} \} , \]
which sends $f$ to the function $g : \R \to \R$ defined by $g(t) = f(t,0,\ldots,0)$,
where the right-hand-side is equipped with the sub-diffeology of $C^\infty(\R,\R)$.
The map $\phi$ is injective, by rotational invariance.
By~\cite[Theorem~1]{Wh}, any smooth even function $g : \R \to \R$ can be
written as $h(t^2)$ for a (non-unique) smooth function $h : \R \to \R$,
which implies that $\phi$ is surjective: given $g$, define $f$ by $f(x) = h(\|x\|^2)$.
To see that $\phi$ is a diffeomorphism, use the Remark at the end of~\cite{Wh},
which says that Theorem~1 holds for functions of several variables which are
even in one of them.
Note that under the identifications provided by the above maps,
the diffeological algebra $G_{[0]}(H_n)$ of germs coincides
with the diffeological algebra $G$ of germs of even smooth functions from $\R$ to $\R$ at $0$.
Thus it suffices to compute the smooth derivations on $G$.

It is easy to check that the map $D : G \to \R$ which sends $g$ to $g''(0)$ is
a derivation.
It is smooth since the second derivative operator~\cite[Lemma~4.3]{CSW} and
evaluation at $0$ are.
And it is non-zero, since $D(j) = 2$,
where $j: \R \ra \R$ is defined by $j(x) = x^2$.
Now suppose that $F$ is any derivation on $G$.
If $g$ is a germ of an even smooth function $\R \to \R$ near $0$,
then $g(x) = g(0) + x^2 h(x)$ for some even smooth germ $h$.
Thus $F(g) = F(j) h(0)$ while $D(g) = 2 h(0)$, and so
$F = (F(j)/2) D$.
Therefore, $\hat{T}_{[0]}(H_n)$ is $1$-dimensional.

We note in passing that $G$ can also be described as the algebra of germs
of smooth functions on $H_1$, so the argument above can be viewed as a
reduction to the case where $n = 1$.
\end{ex}

The technique used in the following example to show that an internal tangent vector is zero
is used in many other examples in this paper.

\begin{ex}\label{halfline:sub}
Let $H_{sub}=[0,\infty)$, as a diffeological subspace of $\R$.
Let us calculate $T_0(H_{sub})$ and $\hat{T}_0(H_{sub})$.

We will first show that $T_0(H_{sub}) = 0$.
By Proposition~\ref{lgcurve}, it suffices to show that every
tangent vector of the form $p_*(\frac{d}{dt})$ is zero, where
$p : \R \to H_{sub}$ is a smooth curve with $p(0) = 0$ and $\frac{d}{dt}$ is the
standard unit vector in $T_0(\R)$.
It follows from Taylor's formula that $p'(0) = 0$,
and hence that $p(x) = x^2 r(x)$ for some plot $r: \R \ra H_{sub}$.
We define $q:\R^2 \ra H_{sub}$ by $q(x,y)=y^2 r(x)$,
which is clearly a plot of $H_{sub}$.
This restricts to $p$ on the diagonal $y=x$, and so
$p_*(\frac{d}{dt}) = q_*(\frac{d}{dx}+\frac{d}{dy})
= q_*(\frac{d}{dx}) + q_*(\frac{d}{dy})$.
Thus our tangent vector is the sum of the tangent vectors
obtained by restricting $q$ to the axes.
The restriction of $q$ to the $x$-axis gives the zero function,
and the restriction of $q$ to the $y$-axis gives
the function $h:\R \ra H_{sub}$ sending $y$ to $y^2 r(0)$.
The former clearly gives the zero tangent vector.
For the latter,
since $h(y) = h(-y)$, we have that $h_*(\frac{d}{dy}) = h_*(-\frac{d}{dy}) = -h_*(\frac{d}{dy})$,
which implies that $h_{*}(\frac{d}{dy}) = 0$.
Hence our original vector is $0$, and so $T_0(H_{sub})=0$.

To calculate $\hat{T}_0(H_{sub})$,
consider the squaring map $\alpha : H_1 \to H_{sub}$, where $H_1 = \R/O(1)$ is
described in Example~\ref{halfline:quotient}.
The map $\alpha$ is smooth, since it fits into a diagram
\[
  \xymatrix{\R \ar[r]^j \ar@{->>}[d] & \R \\
            H_1 \ar[r]_-{\alpha}     &  H_{sub}\,.\, \ar@{ (->}[u] }
\]
We therefore have a smooth map $\alpha^* : C^{\infty}(H_{sub}, \R) \to C^{\infty}(H_1, \R)$.
Regarding the latter as consisting of even smooth functions $\R \to \R$,
$\alpha^*$ sends a smooth function $h : H_{sub} \to \R$ to the smooth function
mapping $t$ to $h(t^2)$.
This is clearly injective, and by~\cite{Wh} it is surjective and a diffeomorphism.
Also note that by \cite[Lemma~3.17]{CSW}, the $D$-topology on $H_{sub}$ is the same as the sub-topology of $\R$.
This implies that the algebras $G_0(H_{sub})$ and $G_{[0]}(H_1)$ of germs are
isomorphic as diffeological algebras, and therefore that
$\hat{T}_0(H_{sub}) \cong \hat{T}_{[0]}(H_1) \cong \R$.
A non-zero element of $\hat{T}_0(H_{sub})$ is given by the smooth derivation sending $f$ to $f'(0)$.
Under the isomorphism, this corresponds to $D/2 \in \hat{T}_{[0]}(H_1)$
from Example~\ref{halfline:quotient}.
\end{ex}

\subsection{Other approaches to tangent spaces}\label{ss:other-approaches}

We have studied the internal and external tangent spaces in some detail
because we find them to be the most natural definitions.
Rather than arguing that one of them is the ``right'' definition, we
simply point out that it depends on the application.
The internal tangent space fits most closely with the definition of a
diffeological space, as it works directly with the plots, so it will
form the basis for the next section, on tangent bundles.

There are many other possible approaches to defining tangent spaces,
and again, these will be useful for different applications.
We briefly summarize a few variants of our approaches here.

\subsubsection{Variants of the internal tangent space}

We have seen in Proposition~\ref{lgcurve} that the internal tangent space
$T_x(X)$ is spanned by vectors of the form $p_*(\frac{d}{dt})$,
where $p : \R \to X$ is a smooth curve sending $0$ to $x$.
One could instead use smooth maps from $[0, \infty)$ to $X$ sending
$0$ to $x$, with relations coming from higher-dimensional quadrants.
With such a definition, $[0, \infty)$ as a diffeological subspace of $\R$
would have a non-trivial tangent space at $0$.

Our internal tangent spaces were defined as colimits in the category of vector spaces.
One could instead take the colimit in the category of sets, which would mean
that every tangent vector is representable by a smooth curve.

These two choices are independent;  one could form a tangent space
using either one or both of them.

\subsubsection{Variants of the external tangent space}

An external tangent vector is defined to be a smooth derivation $G_x(X) \to \R$.
One might instead consider all derivations.
For all examples in Subsection~\ref{ss:examples} we would get the same external tangent spaces,
but in general we don't know if every derivation is automatically smooth.

Independently, one could also change the definition of the space $G_x(X)$
of germs of smooth functions at $x$, for a general diffeological space $X$.
It was defined at the beginning of Subsection~\ref{ss:external} using the
$D$-topology on $X$.
However, when studying diffeological bundles~\cite{I1}, the correct notion
of ``locality'' uses the plots, not the $D$-open subsets.
Moreover, for global smooth functions, we have that
$C^{\infty}(X, \R) = \lim C^{\infty}(U, \R)$,
where the limit is taken in the category of sets and the indexing category
is the category $\DS/X$ of plots $U \to X$.
Thus, by analogy, one might define the germs of smooth functions at $x$ to be
$\lim G_0(U)$,
where the indexing category is the category $\DS_0/(X,x)$ of pointed plots
$(U,0) \to (X,x)$.
This has the advantage of being closer to the definition of the internal tangent
space, but we have not investigated it in detail.

There is an established definition of differential forms on a
diffeological space~\cite{I3}, and one could instead define the
tangent space to be the dual of the space of $1$-forms at a point.
This is closely related to the external tangent space we study,
since a germ $f$ of a smooth function gives rise to a $1$-form $df$.

\subsubsection{A mixed variant}

Finally, recall that in Remark~\ref{rem:comparison-map} we defined a
natural transformation $\beta : T \to \hat{T}$.
One could consider the image of this natural transformation.
That is, one identifies internal tangent vectors if they give rise
to the same directional derivative operators.
Something very close to this is done in~\cite{V}, except that
the derivative operators are only compared on global smooth functions
rather than germs of smooth functions.

In~\cite{I3}, a mixed variant of the definition involving
$1$-forms is proposed, and worked out in some detail.
Each $1$-dimensional plot centered at a point gives rise to a smooth
linear functional on $1$-forms, and the tangent space is defined to
be the span of such linear functionals.
This definition agrees with the internal tangent space in all examples
where we know both.

One can also consider mixed variants involving sets rather than
vector spaces.  For example, the set of external tangent
vectors which are represented by smooth curves is called the
kinematic tangent space in~\cite{St2}.
A very similar approach was taken in~\cite{So1}, which discussed
the case in which $X$ is a diffeological group, and used certain ``states''
$X \to \C$ to determine when two internal tangent vectors are equal.

\section{Internal tangent bundles}\label{s:bundle}

We discussed the internal tangent space of a diffeological space at a point in Subsection~\ref{ss:internal}.
As usual, if we gather all of the internal tangent spaces together, we can form the internal tangent bundle.
In Subsection~\ref{ss:dvs}, we begin by recalling the diffeology Hector defined on this bundle~\cite{He}.
We show that it is not well-behaved in general
and also point out some errors in~\cite{He,HM,La}.
We then introduce a refinement of Hector's diffeology, called the dvs diffeology,
give several examples and counterexamples, and describe the internal tangent bundle
of a diffeological group.
In Subsection~\ref{ss:conceptual}, we give a conceptual explanation of the
relationship between Hector's diffeology and the dvs diffeology:  they are
colimits, taken in different categories, of the same diagram.
In Subsection~\ref{ss:fine}, we study the question of when internal tangent spaces
are fine diffeological vector spaces.
Finally, in Subsection~\ref{ss:function}, we study the internal tangent bundles of function spaces,
and generalize a result in~\cite{He,HM} that says that the internal tangent space of
the diffeomorphism group of a compact smooth manifold at the identity is isomorphic
to the vector space of all smooth vector fields on the manifold.

\subsection{Definitions and examples}\label{ss:dvs}

In this subsection, we recall Hector's diffeology on the internal tangent bundle
of a diffeological space.  Then we observe in Example~\ref{ex:bundleofcross} that
when the internal tangent bundle is equipped with Hector's diffeology,
neither addition nor scalar multiplication are smooth in general.
This provides a counterexample to some claims in~\cite{He,HM,La}.
To overcome this problem, we introduce a refinement of Hector's diffeology,
called the dvs diffeology, on the tangent bundle.
Proposition~\ref{prop:subandproductforbundles} extends some results
about internal tangent spaces to internal tangent bundles.
Then we prove in Theorem~\ref{thm:Tdiffgrptrivial}
that the internal tangent bundle of a diffeological group is always trivial,
as the original proof in~\cite{HM} was partially based on a false result.
We also show that, in this case, Hector's diffeology and the dvs diffeology agree.

\begin{de}
The \dfn{internal tangent bundle} $TX$ of a diffeological space $X$
is defined to be the set $\coprod_{x \in X} T_x(X)$.
\dfn{Hector's diffeology}~\cite{He} on $TX$ is generated by the maps $Tf:TU \ra TX$,
where $f:U \ra X$ is a plot of $X$ with $U$ connected, $TU$ has the standard diffeology,
and for each $u \in U$, $T_u f: T_u(U) \ra T_{f(u)}(X)$ is defined to be the composite
$T_u(U) \ra T_0(U-u) \ra T_{f(u)}(X)$, with $U-u$ the translation of $U$ by $u$.
The internal tangent bundle of $X$ with Hector's diffeology is denoted $T^H(X)$,
and $T_x(X)$ with the sub-diffeology of $T^H(X)$ is denoted $T_x^H(X)$.
We write elements of $TX$ as $(x, v)$, where $v \in T_x(X)$.
\end{de}

Recall that by the universal property of colimits,
for any smooth map $f:X \ra Y$ and any $x \in X$,
we have a linear map $f_*: T_x(X) \ra T_{f(x)}(Y)$.
It is straightforward to check that $T^H:\Diff \ra \Diff$ is a functor,
and hence $f_*: T_x^H(X) \ra T_{f(x)}^H(Y)$ is smooth.
Moreover, the natural map $\pi_X:T^H(X) \ra X$ is smooth (indeed, it is a subduction),
and hence $\pi:T^H \ra 1$ is a natural transformation.
Also the zero section $X \ra T^H(X)$ is smooth.

\begin{de}\label{def:dvs}
A \dfn{diffeological vector space} is a vector space object in $\Diff$.
More precisely, it is both a diffeological space and a vector space
such that the addition and scalar multiplication maps are smooth.
\end{de}

The following example shows that $T_x^H(X)$ is not a diffeological vector space
in general.
In fact, both the addition map $T_x^H(X) \times T_x^H(X) \ra T_x^H(X)$
and the scalar multiplication map $\R \times T_x^H(X) \ra T_x^H(X)$
can fail to be smooth.
Therefore, both~\cite[Proposition~6.6]{HM} and~\cite[Lemma~5.7]{La} are false.

\begin{ex}\label{ex:bundleofcross}
Let $X$ be the diffeological space introduced in Example~\ref{ex:cross-pushout},
two copies of $\R$ glued at the origin thought of as a subset of $\R^2$.
We first show that the addition map
$T_{(0,0)}^H(X) \times T_{(0,0)}^H(X) \ra T_{(0,0)}^H(X)$ is not smooth.
To see this, let $f, g: \R \ra T_{(0,0)}(X)$ be given by
$f(x)=(x,0)$ and $g(x)=(0,x)$,
where we use the natural identification $T_{(0,0)}(X) \cong \R^2$.
Clearly $f$ and $g$ are smooth as maps $\R \ra T_{(0,0)}^H(X)$.
We will show that the sum $h = f + g: \R \ra T_{(0,0)}^H(X)$
given by $x \mapsto (x,x)$ is not smooth.
Any plot $p: U \to X$ must locally factor through one of the axes,
and so the diffeology on $T_{(0,0)}^H(X)$ is generated by plots
factoring through one of the axes.
Clearly $h$ is not locally constant and
does not locally factor through a generating plot,
so it is not smooth.
Now we show that scalar multiplication
$\R \times T_{(0,0)}^H(X) \ra T_{(0,0)}^H(X)$ is not smooth.
This is because the (non-smooth) map $h$ can be described as the composite
$\R \ra \R \times T_{(0,0)}^H(X) \ra T_{(0,0)}^H(X)$,
where the first map is given by $t \mapsto (t,(1,1))$, which is clearly smooth.
Note that, away from the axes, the diffeology on $T_{(0,0)}^H(X)$ is discrete.
\end{ex}

\begin{rem}\label{rem:scalar-mult}
It follows that the fibrewise scalar multiplication map $\R \times T^H(X) \to T^H(X)$
is also not smooth in general.
This is particularly surprising given the following observation.
A generating plot of $T^H(X)$ is of the form $Tf : TU \to T^H(X)$,
where $f : U \to X$ is a plot of $X$ with $U$ connected.
Thus one might expect the plots $1_{\R} \times Tf : \R \times TU \to \R \times T^H(X)$
to generate the product diffeology on $\R \times T^H(X)$.
If so, the commutative square
\[
\xymatrix@C+10pt{\R \times TU \ar[r]^-{1_{\R} \times Tf} \ar[d] & \R \times T^H(X) \ar[d] \\
                    TU \ar[r]_{Tf}                   & T^H(X) }
\]
would imply that scalar multiplication on $T^H(X)$ is smooth.
The problem is that there may be tangent vectors in $T^H(X)$ which are
not in the image of any generating plot $TU \to T^H(X)$, in which case
one needs to consider constant plots as well.
However, this argument has shown that if every tangent vector in $X$ is
$1$-representable (Remarks~\ref{rem:infinite-product} and~\ref{rem:representable}),
then scalar multiplication is smooth.
This will be the case when $X$ is a diffeological group (Remark~\ref{rem:diff-group}).
It is not hard to see that the converse is true as well:  when scalar multiplication
is smooth, every tangent vector in $X$ is $1$-representable.
\end{rem}

We will introduce a new diffeology on the internal tangent bundle of a diffeological space
that makes the addition and scalar multiplication maps smooth.
We first introduce the following concept:

\begin{de}
Let $X$ be a diffeological space.
A \dfn{diffeological vector space over $X$} is a diffeological space $V$,
a smooth map $p : V \to X$
and a vector space structure on each of the
fibres $p^{-1}(x)$ such that the addition map $V \times_X V \to V$,
the scalar multiplication map $\R \times V \to V$
and the zero section $X \ra V$ are smooth.
Here $\R \times V$ has the product diffeology
and $V \times_X V$ has the sub-diffeology of the product diffeology on $V \times V$.
In other words, a diffeological vector space over $X$ is
a vector space object in the overcategory $\Diff/X$
over the field object $\pr_2: \R \times X \to X$.

In the case when $X$ is a point, we recover the concept of diffeological vector space.
\end{de}

Note that if $V$ is a diffeological vector space over $X$, then it
is automatically the case that each fibre of $p$, with the sub-diffeology,
is a diffeological vector space.
It also follows that $p$ is a subduction.

\medskip

Now we give a construction that we will use to enlarge the
diffeology on $T^H(X)$ in order to make it a diffeological
vector space over $X$.
A similar construction can be found in~\cite[Theorem~5.1.6 and Definition~6.2.1]{V},
using a different definition of the tangent spaces.
One can show that the notion of \emph{regular vector bundle} in~\cite{V}
exactly matches our notion of a diffeological vector space over a
diffeological space.

\begin{prop}\label{prop:dvs-over-X}
Let $p:V \to X$ be a smooth map between diffeological spaces, and
suppose that each fibre of $p$ has a vector space structure.  Then there
is a smallest diffeology $\cD$ on $V$ which contains the given diffeology
and which makes $V$ into a diffeological vector space over $X$.
\end{prop}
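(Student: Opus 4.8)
The plan is to realize $\cD$ as the meet, in the complete lattice of diffeologies on $V$, of the family $S$ of all diffeologies that contain the given diffeology $\cD_V$ and make $V$ into a diffeological vector space over $X$. Since that lattice is complete, it suffices to show that $S$ is nonempty and closed under arbitrary intersections: the intersection of all members of $S$ is then itself a member, and it is the least element, which is exactly the diffeology we want. So the whole argument reduces to these two verifications.

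For nonemptiness, the naive candidate (the indiscrete diffeology) fails, since it need not make $p$ smooth. Instead I would use the largest diffeology $p^*\cD_X$ making $p:V\to X$ smooth, whose plots are exactly the maps $q:U\to V$ with $p\circ q\in\cD_X$; this is easily checked to be a diffeology, and it is in fact the largest member of $S$. It contains $\cD_V$ because $p$ is smooth for $\cD_V$. To see it lies in $S$, I would exploit that the fibrewise operations preserve fibres: for a plot $q=(q_1,q_2)$ of $V\times_X V$ one has $p\circ(\mathrm{add}\circ q)=p\circ q_1\in\cD_X$, for scalar multiplication $p\circ(r\cdot q)=p\circ q\in\cD_X$, and the zero section satisfies $p\circ z=\id_X$, so $z$ composed with any plot of $X$ returns that plot. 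Hence all three structure maps are smooth for $p^*\cD_X$, and $p^*\cD_X\in S$.

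The substantive work is closure under intersection. Let $\{\cD_i\}_{i\in I}\subseteq S$ and set $\cD=\bigcap_i\cD_i$. That $\cD$ contains $\cD_V$ is immediate, and that $p$ is smooth for $\cD$ follows because $\cD\subseteq\cD_i$, so every $\cD$-plot of $V$ is a $\cD_i$-plot and is therefore carried by $p$ into $\cD_X$. The key observation is that forming products and fibre products commutes with intersection of diffeologies: a map into $V\times V$ is a plot for the product of $\cD$ with itself iff both of its components lie in every $\cD_i$, so the product diffeology built from $\cD$ equals $\bigcap_i$ of the product diffeologies built from the $\cD_i$, and the same holds for the sub-diffeology on $V\times_X V$ and for $\R\times V$. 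Granting this, if $q$ is a plot of the domain of one of the operations relative to $\cD$, then $q$ is simultaneously such a plot relative to each $\cD_i$; smoothness of that operation for each $\cD_i$ places the composite in every $\cD_i$, hence in $\cD$. The zero section is handled the same way. Therefore $\cD\in S$.

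I expect the compatibility of products and fibre products with intersection to be the step demanding the most care, since it is precisely what allows smoothness of the fibrewise addition and scalar multiplication to descend from the individual $\cD_i$ to their intersection; the subtlety is that the \emph{domains} of these operations depend on the diffeology of $V$, so one cannot treat the operations as fixed maps between fixed spaces. Once that bookkeeping is in place, the conclusion that $\cD=\bigcap S$ is the smallest good diffeology is purely formal.
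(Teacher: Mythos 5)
Your proposal is correct and follows essentially the same route as the paper: the paper also seeds the family with the largest diffeology making $p$ smooth and then verifies that the intersection of all qualifying diffeologies again makes the fibrewise operations smooth, using exactly your observation that a plot of $V\times_X V$ (resp.\ $\R\times V$) for the intersection is simultaneously a plot for each member of the family. Your write-up is somewhat more explicit about why the pullback diffeology lies in the family and why fibre products commute with intersections, but these are elaborations of the same argument, not a different one.
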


\begin{proof}
We first take the largest diffeology on $V$ making $p: V \to X$ smooth.
It is easy to see that this diffeology contains the original diffeology on $V$
and makes $V$ into a diffeological vector space over $X$.
Now consider the intersection $\cD$ of all diffeologies $\cD_i$ on $V$ which
have these two properties.
We claim that $\cD$ also has these two properties.
It is clear that $\cD$ contains the original diffeology on $V$
and that $p:(V,\cD) \ra X$ and the zero section $X \ra (V,\cD)$ are smooth.
A plot in $V \times_X V$ consists of plots $q_1, q_2 : U \to V$ in $\cD$ such that
$p \circ q_1 = p \circ q_2$.
Since the pointwise sum $q_1+q_2 : U \to V$ is in each of
the diffeologies $\cD_i$, it is in $\cD$ as well.
Thus $V \times_X V \to V$ is smooth with respect to $\cD$.
Similarly, $\R \times V \to V$ is smooth.
Therefore, $(V,\cD)$ is a diffeological vector space over $X$.
\end{proof}

We write $\tilde{V}$ for $V$ equipped with the diffeology $\cD$.

In the special case where $X$ is a point and the vector space $V$ starts with the
discrete diffeology, the above proposition proves that there is a smallest
diffeology on $V$ making it into a diffeological vector space.
This diffeology is called the fine diffeology;
see Subsection~\ref{ss:fine} and~\cite[Chapter~3]{I3}.

Note that the largest diffeology on $V$ making $p: V \to X$ smooth, which
was used in the proof of the above proposition, is not
generally interesting, since it induces the indiscrete diffeology on each fibre.

\begin{rem}\label{rem:alt-description}
One can give a more concrete description of the diffeology $\cD$ on $V$
described in Proposition~\ref{prop:dvs-over-X}:
it is generated by the linear combinations of the original plots of $V$
and the composite of the zero section with plots of $X$.
More precisely, given a plot $q : U \to X$,
plots $q_1, q_2, \ldots, q_k : U \to V$
such that $p \circ q_i = q$ for all $i$,
and plots $r_1, r_2, \ldots, r_k: U \to \R$ in the standard diffeology on $\R$,
the linear combination $U \to V$ sending $u$ to $r_1(u) q_1(u) + \cdots + r_k(u) q_k(u)$
in $p^{-1}(q(u))$ is a plot in $\cD$, and every plot in $\cD$ is locally of this form.
Note that when $k=0$, this is the composite of the plot $q$ of $X$
with the zero section of $V$.

One consequence is the following description of the fibres of $\tilde{V}$.
For $x \in X$, write $\tilde{V}_x$ for $p^{-1}(x)$ with the sub-diffeology of $\tilde{V}$
and $\widetilde{V_x}$ for the same set with the diffeology obtained by
starting with the sub-diffeology of $V$ and completing it to a vector space
diffeology using Proposition~\ref{prop:dvs-over-X}.
It is not hard to see that these diffeologies agree.
(See also~\cite[Proposition~6.2.2(iiii)]{V}.)
More generally, this construction commutes with pullbacks.
\end{rem}

The following two results also follow immediately from Remark~\ref{rem:alt-description}.

\begin{prop}\label{prop:Hectortodvs}
Let
\[
\xymatrix{V \ar[r]^f \ar[d]_p & W \ar[d]^q \\ X \ar[r]_g & Y}
\]
be a commutative diagram in $\Diff$, such that each fibre of $p$ and $q$
has a vector space structure and $f|_{p^{-1}(x)}:p^{-1}(x) \ra q^{-1}(g(x))$
is linear for each $x \in X$.
Then the map $f: \widetilde{V} \ra \widetilde{W}$ is smooth.
Furthermore, if both $f$ and $g$ in the original square are inductions,
then so is $f: \widetilde{V} \ra \widetilde{W}$.
\end{prop}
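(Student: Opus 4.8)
The first assertion I would read off directly from the concrete description of the completed diffeology in Remark~\ref{rem:alt-description}. Since that description says every plot of $\widetilde{V}$ is locally a linear combination $u \mapsto \sum_i r_i(u)\, q_i(u)$, where the $q_i : U \to V$ are plots of the original $V$ with a common composite $p \circ q_i = q : U \to X$ and the $r_i : U \to \R$ are smooth, it suffices to check that $f$ carries such a plot to a plot of $\widetilde{W}$. Because all the $q_i(u)$ lie in the single fibre $p^{-1}(q(u))$ and $f$ is linear on that fibre, we get $f\big(\sum_i r_i(u) q_i(u)\big) = \sum_i r_i(u)\, f(q_i(u))$. Each $f \circ q_i$ is a plot of the original $W$ (as $f : V \to W$ is smooth), and commutativity of the square gives $q \circ (f \circ q_i) = g \circ p \circ q_i = g \circ q$, so the $f \circ q_i$ share the common base plot $g \circ q$ of $Y$. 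Hence the image is again a generating linear combination, and $f : \widetilde{V} \to \widetilde{W}$ is smooth.

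For the second assertion, I note first that $f$ is injective (being an induction), so the content is that the diffeology of $\widetilde{V}$ agrees with the sub-diffeology it inherits from $\widetilde{W}$. One inclusion is just the smoothness proved above. For the other, I would start with a function $c : U \to V$ such that $f \circ c$ is a plot of $\widetilde{W}$ and aim to show that $c$ is a plot of $\widetilde{V}$. Shrinking $U$, write $f \circ c = \sum_i s_i\, w_i$ with the $w_i$ plots of the original $W$ over a common base plot $\bar q : U \to Y$ and the $s_i$ smooth. Since $\bar q = q \circ (f \circ c) = g \circ (p \circ c)$ is a plot of $Y$ factoring through $g$, and $g$ is an induction, $\gamma := p \circ c$ is a plot of $X$; this is exactly where the hypothesis on $g$ enters. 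The plan is then to lift the data $(s_i, w_i)$ to plots $v_i : U \to V$ over $\gamma$ with $f \circ v_i = w_i$, so that $f \circ c = f\big(\sum_i s_i v_i\big)$; injectivity of $f$ would then give $c = \sum_i s_i v_i$, exhibiting $c$ as a plot of $\widetilde{V}$. Here the induction hypothesis on $f$ is what should make each $v_i$ a legitimate plot of $V$, since $f$ being an induction means precisely that a map into $V$ is a plot of $V$ exactly when its composite with $f$ is a plot of $W$.

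The step I expect to be the main obstacle is exactly this lifting of the local linear combination from $W$ back to $V$. The difficulty is that, although the whole combination $\sum_i s_i w_i = f \circ c$ takes values in $\im(f)$, the individual $w_i$ need not, so one cannot naively set $v_i = f^{-1} \circ w_i$ and must instead track how a $V$-valued combination of $W$-plots decomposes. To isolate the role of the two hypotheses I would factor $f$ through the pullback $P := X \times_Y W$, as the composite of the fibrewise-linear map $(p,f) : V \to P$ over $X$ with the projection $\pi_W : P \to W$. The map $\pi_W : \widetilde{P} \to \widetilde{W}$ is a pullback of the induction $g$, hence an induction, and the commutation of the completion with pullbacks from Remark~\ref{rem:alt-description} identifies $\widetilde{P}$ with $X \times_Y \widetilde{W}$; this disposes of $g$. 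What remains is to show that the completion sends the fibrewise-linear induction $(p,f) : V \hookrightarrow P$ over the fixed base $X$ to an induction, which is the genuinely delicate point, since it requires controlling the decomposition of $V$-valued linear combinations of $P$-plots using that $V$ carries the sub-diffeology and that the fibres of $V$ sit linearly inside those of $P$. I would attack this via the concrete description together with the sheaf condition, reducing to a local statement near each parameter and treating the locus where the scalar coefficients vanish separately.
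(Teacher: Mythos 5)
Your treatment of the first assertion is correct and is exactly the argument the paper intends: the authors simply assert that the proposition ``follows immediately from Remark~\ref{rem:alt-description}'', and pushing a local linear combination $\sum_i r_i q_i$ forward to $\sum_i r_i (f\circ q_i)$ over the base $g\circ q$ is that immediate argument.

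For the ``furthermore'' clause there is a genuine gap, and it sits precisely in the step you defer at the end. Your reduction through the pullback $P=X\times_Y W$ is sound (the pullback of an induction is an induction, and the completion commutes with pullbacks), but the residual claim --- that the completion of a fibrewise-linear induction over a \emph{fixed} base is again an induction --- is not merely delicate; in that generality it fails, so no amount of work with the sheaf condition will close it. For instance, take $X=Y$ a point, let $W=\R^2$ carry the diffeology generated by the two curves $p(t)=(t,\alpha(t))$ and $q(t)=(t,\alpha(t)+t)$ with $\alpha$ the characteristic function of the irrationals, and let $V=\{0\}\times\R$ with the sub-diffeology. Every non-locally-constant plot of $W$ has non-constant first coordinate, so the sub-diffeology on $V$ is discrete and $\widetilde V$ is the fine, i.e.\ standard, diffeology on $\R$; yet $t\mapsto 2p(t)-q(2t)=\bigl(0,\,2\alpha(t)-\alpha(2t)-2t\bigr)=(0,\alpha(t)-2t)$ is a plot of $\widetilde W$ landing in $V$ that is not smooth, so $\widetilde V\to\widetilde W$ is not an induction. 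What rescues the statement in the places the paper actually uses it (e.g.\ Proposition~\ref{prop:subandproductforbundles}(1), where the fibre maps are isomorphisms) is the additional hypothesis that $f$ carries $p^{-1}(x)$ \emph{onto} $q^{-1}(g(x))$ for each $x$. Under that hypothesis the lifting you correctly identify as the obstacle becomes trivial: if $f\circ c=\sum_i s_i w_i$ locally, then each $w_i$ takes values in $q^{-1}(g(X))\subseteq\im(f)$ because its base is $g\circ(p\circ c)$, so $v_i:=f^{-1}\circ w_i$ is defined, is a plot of $V$ since $f$ is an induction, satisfies $p\circ v_i=p\circ c$, which is a plot of $X$ since $g$ is an induction, and $c=\sum_i s_i v_i$ by fibrewise injectivity. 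You should either add this fibrewise surjectivity to the hypotheses or restrict to the situations where it holds; as written, the deferred step cannot be completed.
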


\begin{prop}\label{pr:dvs-product}
Let $p:V \ra X$ and $q:W \ra Y$ be smooth maps between diffeological spaces.
Assume that each fibre of $p$ and $q$ has a vector space structure.
Then $\widetilde{V} \times \widetilde{W}$ is isomorphic to
$\widetilde{V \times W}$ as diffeological vector spaces over $X \times Y$.
\end{prop}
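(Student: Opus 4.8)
The plan is to show directly that the two diffeologies on the common underlying set $V \times W$ coincide, using the concrete description of the completed diffeology given in Remark~\ref{rem:alt-description}. The fibrewise vector space structures agree on the nose, since the fibre of $p \times q$ over $(x,y)$ is $p^{-1}(x) \times q^{-1}(y)$ with the product vector space structure, and the comparison map is the identity of underlying sets, hence automatically fibrewise linear over $X \times Y$. So the whole content is the equality of the diffeologies; once that is established, the fact that $\widetilde{V \times W}$ is a diffeological vector space over $X \times Y$ (by Proposition~\ref{prop:dvs-over-X}) transfers to $\widetilde{V} \times \widetilde{W}$.

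First I would show that the identity map $\widetilde{V} \times \widetilde{W} \to \widetilde{V \times W}$ is smooth. A plot of $\widetilde{V} \times \widetilde{W}$ is a pair $(\phi, \psi)$ of plots $\phi : U \to \widetilde{V}$ and $\psi : U \to \widetilde{W}$. By Remark~\ref{rem:alt-description}, and shrinking $U$ if necessary, I may write $\phi(u) = \sum_i r_i(u)\, a_i(u)$ and $\psi(u) = \sum_j s_j(u)\, b_j(u)$, where the $a_i : U \to V$ are original plots with common base $p \circ a_i = \alpha : U \to X$, the $b_j : U \to W$ are original plots with common base $q \circ b_j = \beta : U \to Y$, and the $r_i, s_j : U \to \R$ are smooth. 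Working locally is harmless here by the sheaf condition.

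The key step is to rewrite $(\phi, \psi)$ as a linear combination of original plots of $V \times W$ over the single base plot $(\alpha, \beta) : U \to X \times Y$, and for this I pad with zero sections. Writing $0_V$ and $0_W$ for the zero sections of $V$ and $W$, the maps $(a_i, 0_W \circ \beta)$ and $(0_V \circ \alpha, b_j)$ are original plots of $V \times W$, since each coordinate is an original plot (using that the zero section composed with a plot of the base is a plot), and both lie over $(\alpha, \beta)$. Since $(\phi, \psi)(u) = \sum_i r_i(u)\,(a_i, 0_W \circ \beta)(u) + \sum_j s_j(u)\,(0_V \circ \alpha, b_j)(u)$ in the fibre over $(\alpha(u), \beta(u))$, Remark~\ref{rem:alt-description} shows that $(\phi, \psi)$ is a plot of $\widetilde{V \times W}$. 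This zero-padding is the only genuinely non-formal point, and it is precisely what the concrete description makes available.

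Conversely, I would show the identity $\widetilde{V \times W} \to \widetilde{V} \times \widetilde{W}$ is smooth. A plot of $\widetilde{V \times W}$ is locally of the form $u \mapsto \sum_i r_i(u)\, s_i(u)$, where the $s_i : U \to V \times W$ are original plots over a common base $(\alpha, \beta) : U \to X \times Y$. Writing $s_i = (a_i, b_i)$, the first coordinate $\sum_i r_i a_i : U \to V$ is a linear combination of original plots of $V$ over $\alpha$, hence a plot of $\widetilde{V}$, and likewise the second coordinate is a plot of $\widetilde{W}$; therefore the pair is a plot of $\widetilde{V} \times \widetilde{W}$. Combining the two directions shows the identity is a diffeomorphism, and together with the observations in the first paragraph this identifies $\widetilde{V} \times \widetilde{W}$ with $\widetilde{V \times W}$ as diffeological vector spaces over $X \times Y$.
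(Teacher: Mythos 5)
Your setup and your second direction (splitting a linear combination of original plots of $V \times W$ into its two coordinates) are fine, and your plan of comparing the two diffeologies via Remark~\ref{rem:alt-description} is the natural way to flesh out the paper's one-line justification. The problem is in the step you yourself flag as the only non-formal one: the zero-padding. For Remark~\ref{rem:alt-description} to certify $(\phi,\psi)$ as a plot of $\widetilde{V\times W}$, the terms $(a_i,\,0_W\circ\beta)$ and $(0_V\circ\alpha,\,b_j)$ must be plots of $V\times W$ for the \emph{original} product diffeology. But nothing in the hypotheses makes the zero section $Y\to W$ smooth for the original diffeology on $W$; the composite $0_W\circ\beta$ is only guaranteed to be a plot of $\widetilde{W}$ (the $k=0$ case of the remark). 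So your decomposition exhibits $(\phi,\psi)$ as a linear combination of plots of $\widetilde{V}\times\widetilde{W}$, which is circular, not as a linear combination of original plots of $V\times W$. Nor can you dodge this by using only the honest original plots $(a_i,b_j)$: a combination $\sum_{i,j}c_{ij}(a_i,b_j)$ has coordinates $\sum_i\bigl(\sum_j c_{ij}\bigr)a_i$ and $\sum_j\bigl(\sum_i c_{ij}\bigr)b_j$, so matching $\bigl(\sum_i r_i a_i,\ \sum_j s_j b_j\bigr)$ by prescribing row and column sums forces $\sum_i r_i=\sum_j s_j$ pointwise, which need not hold.

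The gap is not cosmetic: with no further hypothesis the statement can fail. Take $X=Y=\R$, $V=W=\R^2$ with $p=q=\pr_1$, the evident vector space structure on each fibre $\{x\}\times\R$, and the diffeology on $V$ (and on $W$) generated by the single curve $t\mapsto(t,1)$. Over the diagonal base $\delta(t)=(t,t)$ the only original plot of $V\times W$ is $t\mapsto((t,1),(t,1))$, so by Remark~\ref{rem:alt-description} every plot of $\widetilde{V\times W}$ over $\delta$ has the form $t\mapsto((t,g(t)),(t,g(t)))$ for a single smooth $g$; yet $t\mapsto((t,1),(t,0))$ is a plot of $\widetilde{V}\times\widetilde{W}$ over $\delta$, and a fibrewise linear diffeomorphism over $X\times Y$ would have to carry the two-parameter family of plots over $\delta$ into the one-parameter diagonal family, forcing each fibre map to land in a proper subspace and hence fail to be bijective. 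The repair is to add the hypothesis that the zero sections $X\to V$ and $Y\to W$ are smooth for the original diffeologies; then your padded terms really are original plots of $V\times W$ and your argument goes through verbatim. That hypothesis does hold in the only case the paper uses, namely $V=T^H(X)$ and $W=T^H(Y)$, whose zero sections are smooth, so Proposition~\ref{prop:subandproductforbundles}(2) and its consequences are unaffected; but you should state the extra hypothesis (or restrict to that case) rather than rely on the zero section being an original plot.
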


Here is the new diffeology on the internal tangent bundle of a diffeological space:

\begin{de}\label{def:dvsdiffeology}
We define the \dfn{dvs diffeology} on $TX$ to be the smallest diffeology containing
Hector's diffeology which makes $TX$ into a diffeological vector space over $X$.
The internal tangent bundle of $X$ with the dvs diffeology is denoted $T^{dvs}(X)$,
and $T_x(X)$ with the sub-diffeology of $T^{dvs}(X)$ is denoted $T_x^{dvs}(X)$.
\end{de}

As a convention, whenever Hector's diffeology coincides with the dvs diffeology
for a tangent space or a tangent bundle, we omit the superscript.

\smallskip

By Proposition~\ref{prop:Hectortodvs} and
the corresponding results for Hector's diffeology,
it is clear that $T^{dvs}:\Diff \ra \Diff$ is a functor,
and we have a natural transformation $\pi:T^{dvs} \ra 1$.
Hence, for any smooth map $f:X \ra Y$ and any $x \in X$,
the induced map $f_*: T_x^{dvs}(X) \ra T_{f(x)}^{dvs}(Y)$
is a smooth linear map between diffeological vector spaces.
Also, the natural map $\pi_X:T^{dvs}(X) \ra X$ is a subduction.

\begin{ex}
When $X$ is a smooth manifold, it follows from Example~\ref{smoothmanifolds} that
$TX$ agrees with the usual tangent bundle as a set.
In fact, it is not hard to check that
Hector's diffeology on $TX$ coincides with the standard diffeology,
regarding $TX$ as a smooth manifold.
Since $TX$ is a diffeological vector space over $X$,
the dvs diffeology on $TX$ also coincides with the standard diffeology.
\end{ex}

\begin{rem}\label{re:bundle}
Example~\ref{ex:cross-pushout} shows that $TX \ra X$ is not
a diffeological bundle~\cite{I1} in general,
whether $TX$ is equipped with Hector's diffeology or the dvs diffeology,
since the pullback along a non-constant plot passing through the origin
would have fibres of different dimensions.
The same example also shows that $TX \ra X$ is not a fibration
in the sense of~\cite{CW}, with either diffeology.
Indeed, there is no dashed arrow in $\Diff$ making the diagram
\[
  \xymatrix{
    0 \ar[r] \ar@{ (->}[d] & TX \ar[d] \\
   \R \ar[r] \ar@{-->}[ur] & X
  }
\]
commute, where the top map sends $0$ to $(1,1) \in T_{(0,0)}(X)$
and the bottom map is the inclusion of the $x$-axis.
\end{rem}

Now we extend Propositions~\ref{prop:internallocal} and~\ref{product}
from internal tangent spaces to internal tangent bundles:

\begin{prop}\label{prop:subandproductforbundles}\
\begin{enumerate}
\item \label{subforbundles} If $A$ is a $D$-open subset of a diffeological space $X$,
equipped with the sub-diffeology of $X$,
then $T^H(A) \ra T^H(X)$ is an induction such that $T_a^H(A) \ra T_a^H(X)$
is a diffeomorphism for each $a \in A$.
The same is true for the dvs diffeology.

\item \label{productforbundles} Let $X$ and $Y$ be diffeological spaces.
Then there is a natural diffeomorphism $T^H(X \times Y) \ra T^H(X) \times T^H(Y)$
which commutes with the projections to $X \times Y$,
and the same is true for the dvs diffeology.
\end{enumerate}
\end{prop}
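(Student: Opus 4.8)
The plan is to prove each statement first for Hector's diffeology by working directly with generating plots, and then to deduce the corresponding dvs statement formally from the machinery already developed, namely Propositions~\ref{prop:Hectortodvs} and~\ref{pr:dvs-product}. In both parts the underlying bijection of sets is provided by the tangent-space results established earlier: Proposition~\ref{prop:internallocal} in part~(\ref{subforbundles}) and Proposition~\ref{product} in part~(\ref{productforbundles}). The real content is therefore to upgrade these fibrewise bijections to diffeomorphisms of the total spaces, and the heart of the matter is the smoothness of the inverse maps for Hector's diffeology.

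For part~(\ref{subforbundles}), the inclusion $A \hookrightarrow X$ is smooth, so functoriality of $T^H$ gives a smooth map $T^H(A) \to T^H(X)$; by Proposition~\ref{prop:internallocal} it is injective with image $\pi_X^{-1}(A)$. To see that it is an induction, I would show that any plot of $T^H(X)$ landing in $\pi_X^{-1}(A)$ is a plot of $T^H(A)$. Such a plot locally factors through a generating plot $Tg : TV \to T^H(X)$, say as $Tg \circ \theta$; since $A$ is $D$-open, $V' := g^{-1}(A)$ is open in $V$, the fact that the base lands in $A$ forces $\theta$ to factor through $TV' \subseteq TV$, and $g|_{V'} : V' \to A$ is a plot of $A$, so the factorisation passes through the generating plot $T(g|_{V'})$ of $T^H(A)$. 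The fibrewise claim $T_a^H(A) \cong T_a^H(X)$ then follows by restricting the induction to the fibre over $a$ and invoking bijectivity. For the dvs statement I would feed the commuting square with vertical maps $\pi_A, \pi_X$ and horizontal inductions $A \hookrightarrow X$ and $T^H(A) \to T^H(X)$, whose fibre maps are linear isomorphisms, into Proposition~\ref{prop:Hectortodvs}; since $T^{dvs} = \widetilde{T^H}$ over the respective bases, this yields the induction $T^{dvs}(A) \to T^{dvs}(X)$ at once, and the fibrewise diffeomorphism as before.

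For part~(\ref{productforbundles}), the natural map is $\Phi = ((\pr_1)_*, (\pr_2)_*) : T^H(X \times Y) \to T^H(X) \times T^H(Y)$, which commutes with the projections to $X \times Y$ and is a fibrewise linear isomorphism by Proposition~\ref{product}; it is smooth because its two components are $T^H(\pr_1)$ and $T^H(\pr_2)$. The key computation is that $\Phi^{-1}$ carries a generating plot $Tf \times Tg : TU \times TV \to T^H(X) \times T^H(Y)$ (for plots $f : U \to X$ and $g : V \to Y$ with $U,V$ connected) to $T(f \times g)$ precomposed with the standard diffeomorphism $c : TU \times TV \cong T(U \times V)$; since $T(f \times g)$ is a generating plot of $T^H(X \times Y)$, this composite is smooth, and as the plots $Tf \times Tg$ locally generate the product diffeology, $\Phi^{-1}$ is smooth. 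For the dvs statement I would combine this Hector isomorphism (which is an isomorphism over $X \times Y$ respecting the fibrewise linear structure) with Proposition~\ref{pr:dvs-product}: applying the completion $\widetilde{(-)}$, which respects isomorphisms over a fixed base by Proposition~\ref{prop:Hectortodvs}, gives $T^{dvs}(X \times Y) = \widetilde{T^H(X \times Y)} \cong \widetilde{T^H(X) \times T^H(Y)} \cong T^{dvs}(X) \times T^{dvs}(Y)$.

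I expect the main obstacle to lie in part~(\ref{productforbundles}) for Hector's diffeology, specifically in verifying the identity $\Phi^{-1} \circ (Tf \times Tg) = T(f \times g) \circ c$ and in confirming that the plots $Tf \times Tg$ really do suffice to test smoothness of a map out of the product diffeology on $T^H(X) \times T^H(Y)$. Once the two Hector cases are secured, the dvs cases are essentially formal consequences of the completion functor together with Propositions~\ref{prop:Hectortodvs} and~\ref{pr:dvs-product}.
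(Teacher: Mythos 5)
Your treatment of part~(\ref{subforbundles}) is essentially the paper's argument: the paper records the same observation via a commutative square identifying a plot of $T^H(X)$ that lands over $A$ with a plot of $T^H(A)$ through $T(p^{-1}(A)) \hookrightarrow TU$, then quotes Proposition~\ref{prop:internallocal} for fibrewise bijectivity and Proposition~\ref{prop:Hectortodvs} for the dvs case; that part of your proposal is complete. For part~(\ref{productforbundles}) you also follow the paper's route: your identity $\Phi^{-1}\circ(Tf\times Tg)=T(f\times g)\circ c$ is exactly the commutative square the paper's proof invokes, and your dvs reduction via Propositions~\ref{prop:Hectortodvs} and~\ref{pr:dvs-product} is what the paper intends.

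The obstacle you flag at the end of part~(\ref{productforbundles}), however, is a genuine gap and not a formality, and neither your argument nor the paper's one-line proof resolves it. The plots $Tf\times Tg$ do \emph{not} generate the product diffeology on $T^H(X)\times T^H(Y)$: a plot of the product is a pair of plots, each of which locally either factors through a generating plot \emph{or is constant}, and a constant plot at a tangent vector that is not $1$-representable (Remarks~\ref{rem:infinite-product} and~\ref{rem:scalar-mult}) factors through no $Tf$. Concretely, let $X$ be the axes with the pushout diffeology (Example~\ref{ex:bundleofcross}) and $Y=\R$. The pair consisting of the constant plot at $((0,0),(1,1))\in T^H(X)$ and the plot $w\mapsto(w,\ddt)$ of $T\R$ is a plot of the product, but its image under $\Phi^{-1}$ is $w\mapsto\bigl(((0,0),w),\,((1,1),1)\bigr)$, which is not locally constant and cannot locally factor through any generating plot $TF$ of $T^H(X\times\R)$: every plot $F$ of $X\times\R$ locally factors through one axis times $\R$, so $F_*$ never produces a vector with $T_{(0,0)}(X)$-component $(1,1)$. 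Thus your proposed verification of smoothness of $\Phi^{-1}$ only covers the plots that do factor through some $Tf\times Tg$, and, as far as I can see, the Hector half of the statement itself requires an extra hypothesis such as $1$-representability of the internal tangent vectors of $X$ and $Y$ --- which does hold in the paper's later applications, e.g.\ to diffeological groups. The dvs half of part~(\ref{productforbundles}) is unaffected: there one can rewrite $\Phi^{-1}$ of such a mixed plot as a finite fibrewise sum of plots over a common base plot, and such sums are plots in the dvs diffeology by Remark~\ref{rem:alt-description}.
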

\begin{proof}
(1)
To see that $T^H(A) \ra T^H(X)$ is an induction, note
that for any plot $p:U \ra X$, $p^{-1}(A)$ is an open subset
of $U$ since $A$ is $D$-open in $X$,
and we have a commutative square
\[
\xymatrix{T(p^{-1}(A)) \ar@{ (->}[r] \ar[d] & TU \ar[d]^{Tp} \\ T^H(A) \ar[r] & T^H(X).}
\]
It follows that, for each $a$ in $A$, the map $T_a^H(A) \ra T_a^H(X)$ is an induction as well.
By Proposition~\ref{prop:internallocal}, this map is an isomorphism
of vector spaces, and therefore it is a diffeomorphism.
The corresponding result for the dvs diffeology then essentially follows from Proposition~\ref{prop:Hectortodvs}.

(2) The first statement follows directly from Proposition~\ref{product}
and the fact that for any plots $p:U \ra X$ and $q:V \ra Y$, we have
the following commutative square
\[
\xymatrix{T(U \times V) \ar[d]_{T(p \times q)} \ar[r]^-{\cong} & TU \times TV \ar[d]^{Tp \times Tq} \\ T^H(X \times Y) \ar[r] & T^H(X) \times T^H(Y),}
\]
and the second statement follows then from Proposition~\ref{pr:dvs-product}.
\end{proof}

We end this subsection with a discussion of the internal tangent bundles of diffeological groups
(see Definition~\ref{def:diff-group}).

\begin{rem}\label{rem:diff-group}\
\begin{enumerate}
\item Let $e$ be the identity in a diffeological group $G$.
It is shown in~\cite[Proposition~6.4]{HM} that the multiplication map $G \times G \ra G$
induces the addition map $T_e(G) \times T_e(G) \ra T_e(G)$.
It follows that the addition map
$T_e^H(G) \times T_e^H(G) \ra T_e^H(G)$ is smooth.
Moreover,~\cite[Corollary~6.5]{HM} implies that every element in $T_e(G)$
can be written as $p_*(\frac{d}{dt})$, where $p:\R \ra G$ is a single smooth curve with $p(0)=e$,
and so it follows from Remark~\ref{rem:scalar-mult} that scalar multiplication is smooth as well.
Therefore, $T_e^H(G)$ is a diffeological vector space.
Since left-multiplication by any $g \in G$ is a diffeomorphism,
we also see that $T_g^H(G)$ is a diffeological vector space
and that $T_e^H(G)$ and $T_g^H(G)$ are isomorphic as diffeological vector spaces.
Similarly, $T_e^{dvs}(G)$ and $T_g^{dvs}(G)$ are isomorphic as diffeological vector spaces.

\item If $V$ is a diffeological vector space, one might hope that the
scalar multiplication map $\R \times V \to V$ induces the scalar multiplication
map $\R \times T_0(V) \to T_0(V)$, where we use that $T_r(\R) \cong \R$ for any $r \in \R$
and that $T$ respects finite products.
(We omit superscripts here because we are not using the diffeology in this remark.)
But since induced maps are always linear, and scalar multiplication is bilinear,
this will only happen when $T_0(V) = 0$.
Instead, for fixed $r \in \R$, we can consider the map $V \to V$ sending $v$ to $r v$.
It is not hard to show that for $r \in \Q$, this induces the map $T_0(V) \to T_0(V)$
sending $u$ to $r u$.
We conjecture that this is true for any $r \in \R$.
It is true if for any $u \in T_0(V)$, there exist plots $p: \R \to V$ and 
$q: \R \to V$ such that $p(0) = 0$, $p_*(\ddt) = u$ and $p(t) = t q(t)$ for $t$ in a neighborhood of $0 \in \R$. 
This condition is satisfied when $V = C^\infty(X, \R^n)$ with the functional diffeology, for any 
diffeological space $X$, or when $V$ is a retract of such a space in the category of diffeological vector spaces.
\end{enumerate}
\end{rem}

Although the proof of~\cite[Proposition~6.8]{HM} was partly based on a false result,
the proposition is still correct:

\begin{thm}\label{thm:Tdiffgrptrivial}
Let $G$ be a diffeological group.
Then $T^H(G)$ is a diffeological vector space over $G$ and
all of $T^H(G)$, $G \times T_e^H(G)$, $T^{dvs}(G)$ and $G \times T_e^{dvs}(G)$
are isomorphic as diffeological vector spaces over $G$.
Therefore, $T_g^H(G) = T_g^{dvs}(G)$ for any $g \in G$.
\end{thm}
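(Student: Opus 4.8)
The plan is to trivialize the tangent bundle by left-translation, exactly as one does for Lie groups, and then to exploit Definition~\ref{def:dvsdiffeology}: once Hector's diffeology is shown to make $TG$ into a diffeological vector space over $G$, the dvs diffeology must coincide with it, being by definition the \emph{smallest} diffeology containing Hector's with this property. Concretely, I would produce a fibrewise-linear diffeomorphism $\Phi : G \times T_e^H(G) \to T^H(G)$ over $G$ sending $(g,v)$ to $(L_g)_*(v)$, where $L_g$ denotes left-multiplication by $g$, and deduce everything from it.

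To build $\Phi$ and its inverse I would use the group structure together with the product isomorphism $T^H(G \times G) \cong T^H(G) \times T^H(G)$ of Proposition~\ref{prop:subandproductforbundles}(\ref{productforbundles}). Writing $m : G \times G \to G$ for multiplication, I would define $\Phi$ as the composite
\[
  G \times T_e^H(G) \lra T^H(G) \times T^H(G) \cong T^H(G \times G) \llra{Tm} T^H(G),
\]
where the first map sends $(g,v)$ to $(0_g, v)$, using the smooth zero section $G \to T^H(G)$ in the first coordinate and the smooth inclusion $T_e^H(G) \hookrightarrow T^H(G)$ in the second. A short computation with the product formula of Proposition~\ref{product} gives $Tm(0_g, v) = (L_g)_*(v)$, since $m(\cdot, e) = \id$ annihilates the first entry while $m(g, \cdot) = L_g$ acts on the second. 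For the inverse I would use the smooth map $\mu : G \times G \to G$, $\mu(g,h) = g^{-1}h$ (smooth because inversion is smooth in a diffeological group), and set $\Psi(w) = (\pi(w),\, T\mu(0_{\pi(w)}, w))$; the analogous computation gives $T\mu(0_g, w) = (L_{g^{-1}})_*(w) \in T_e(G)$, so $\Psi$ lands in $G \times T_e^H(G)$, is smooth, and is inverse to $\Phi$.

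Since each $(L_g)_*$ is a linear isomorphism, $\Phi$ is a fibrewise-linear diffeomorphism over $G$. By Remark~\ref{rem:diff-group}, $T_e^H(G)$ is a diffeological vector space, so the trivial bundle $G \times T_e^H(G)$ is a diffeological vector space over $G$; transporting this structure across the fibrewise-linear $\Phi$ realizes the given fibrewise vector space structure on $TG$ and shows that $T^H(G)$ is a diffeological vector space over $G$. Now $T^{dvs}(G)$ is, by Definition~\ref{def:dvsdiffeology}, the smallest diffeology containing Hector's that makes $TG$ a diffeological vector space over $G$; as Hector's diffeology already does so, we get $T^{dvs}(G) = T^H(G)$, whence $T_e^{dvs}(G) = T_e^H(G)$ and $T_g^{dvs}(G) = T_g^H(G)$ for every $g$. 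All four spaces in the statement are then identified with $G \times T_e^H(G)$ as diffeological vector spaces over $G$.

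The main obstacle is establishing smoothness of the inverse $\Psi$: whereas $\Phi$ is an essentially formal combination of the zero section, the fibre inclusion, and $Tm$, the inverse genuinely needs inversion in $G$ to be smooth, and one must verify that the second coordinate of $\Psi$ factors through the fibre $T_e^H(G)$ with its sub-diffeology. Once $\Phi$ is known to be a diffeomorphism over $G$, the remaining steps—fibrewise linearity and the collapse of the dvs diffeology onto Hector's—are immediate.
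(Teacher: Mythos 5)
Your proposal is correct and follows essentially the same route as the paper: the paper also defines the trivialization as the composite of (zero section $\times$ inclusion), the product isomorphism $T^H(G)\times T^H(G)\cong T^H(G\times G)$ from Proposition~\ref{prop:subandproductforbundles}(2), and $Tm$, and obtains the inverse by the same device with $g^{-1}$ inserted (your $T\mu$ with $\mu(g,h)=g^{-1}h$ is literally the paper's $a(g^{-1},(g,v))$), before concluding $T^{dvs}(G)=T^H(G)$ by minimality exactly as you do. The one worry you flag --- that $\Psi$'s second coordinate lands in $T_e^H(G)$ with its sub-diffeology --- is automatic, since a map into a subspace is smooth precisely when its composite with the inclusion is.
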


\begin{proof}
In the proof of~\cite[Proposition~6.8]{HM}, Hector and Macias-Virgos considered the map
$F:G \times T_e^H(G) \ra T^H(G)$ given by $(g,v) \mapsto (g,(L_g)_*(v))$,
where $L_g:G \ra G$ is left multiplication by $g$,
and they argued that $F$ is a diffeomorphism.
But in the proof that $F$ is smooth, they used that
$T_e^H(G)$ is a fine diffeological vector space, which is not true in general;
see Example~\ref{ex:notfine} for counterexamples.
It is easy to fix this.
Let $a$ be the composite $G \times T^H(G) \ra T^H(G) \times T^H(G)
\cong T^H(G \times G) \ra T^H(G)$, where the first map is given by
$\sigma \times 1_{T^H(G)}$ with $\sigma:G \ra T^H(G)$ the zero section,
the second map is the isomorphism from Proposition~\ref{prop:subandproductforbundles}(2),
and the third map is induced from the multiplication $G \times G \ra G$.
Clearly $a$ is smooth, and it is straightforward to check that $F$ equals $a|_{G \times T_e^H(G)}$,
and is therefore smooth.
One can also see that $F^{-1} : T^H(G) \to G \times T_e^H(G)$ is given by
$F^{-1}(g, v) = (g, a(g^{-1}, (g, v)))$, which is smooth.
So $T^H(G)$ is diffeomorphic to $G \times T_e^H(G)$, and the diffeomorphism
respects the projections to $G$ and the linear structures on the fibres.
In particular, $T^H(G)$ is a diffeological vector space over $G$.
The rest then follows directly from Definition~\ref{def:dvsdiffeology}.
(The last statement also follows from the second paragraph of 
Remark~\ref{rem:alt-description} and Remark~\ref{rem:diff-group}(1).)
\end{proof}

Because of the previous result, our convention allows us to write $TG$ and $T_g(G)$
(without superscripts) when $G$ is a diffeological group.
This includes the case when $G$ is a diffeological vector space.

\subsection{A conceptual description of the Hector and dvs diffeologies}\label{ss:conceptual}

In this subsection we give a categorical explanation of the difference between Hector's diffeology
and the dvs diffeology on the internal tangent bundle of a diffeological space.
To summarize briefly, they can both be described as the colimit of a natural diagram,
but the colimits take place in different categories.
This material is not needed in the rest of the paper.

\medskip

For a diffeological space $X$,
a \dfn{vector space with diffeology over $X$}
is a diffeological space $V$, a smooth map $p:V \ra X$
and a vector space structure on each of the fibres $V_x = p^{-1}(x)$,
with no compatibility conditions.
The category $\VSD$ has as objects the vector spaces with diffeology
over diffeological spaces, and as morphisms the commutative squares
\[
\xymatrix{V \ar[r]^g \ar[d] & W \ar[d] \\ X \ar[r]_f & Y}
\]
in $\Diff$ such that
for each $x \in X$, $g|_{V_x}: V_x \ra W_{f(x)}$ is linear.

\begin{prop}
The category $\VSD$ is complete and cocomplete.
\end{prop}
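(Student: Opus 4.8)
The plan is to construct limits and colimits directly, treating the base diffeological space, the total diffeological space, and the vector-space structure on each fibre separately. The guiding observation is that the forgetful functor $\Vect \ra \Set$ preserves limits but not colimits; this is precisely why limits in $\VSD$ are straightforward while colimits require care. Throughout I would use that $\Diff$ is complete and cocomplete and that limits and colimits in $\Diff$ are computed on underlying sets.

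For completeness, given a diagram $D:J \ra \VSD$ with $D(j)=(p_j:V_j \ra X_j)$, I would form the base $X=\lim_j X_j$ and the total space $V=\lim_j V_j$ in $\Diff$, so that the $p_j$ induce a smooth map $p:V \ra X$. A point of $V$ is a compatible family $(v_j)$, and for $x=(x_j)\in X$ the fibre $p^{-1}(x)$ is the set-level limit of the fibres $(V_j)_{x_j}$. Since the transition maps of $D$ are fibrewise linear, this is a limit in $\Vect$, and because $\Vect \ra \Set$ preserves limits its underlying set agrees with $p^{-1}(x)$; this equips each fibre with a canonical vector-space structure. The projections assemble into a cone in $\VSD$, and the universal properties of the $\Diff$-limits together with fibrewise linearity yield the universal property in $\VSD$.

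For cocompleteness, the naive attempt of taking $V=\colim_j V_j$ in $\Diff$ fails: its underlying set computes the \emph{set-level} colimit of the fibres, whereas the fibres of a colimit in $\VSD$ must carry the vector-space colimit, and these disagree already for coproducts. The main obstacle is thus to build the total space with the correct fibres. I would instead first form $X=\colim_j X_j$ in $\Diff$. For each $x\in X$, let $J_x$ be the category whose objects are pairs $(j,a)$ with $a\in X_j$ mapping to $x$ and whose morphisms are the arrows of $J$ compatible with the base maps; by fibrewise linearity $(j,a)\mapsto (V_j)_a$ defines a functor $J_x \ra \Vect$, and I set $F_x=\colim_{J_x}(V_j)_a$, a vector space. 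The total space is $V=\coprod_{x\in X}F_x$, equipped with the final diffeology generated by the natural maps $V_j \ra V$ sending $v\in(V_j)_a$ to its image in $F_{[a]}$. One checks that $p:V \ra X$ is smooth, so $(p:V \ra X)$ is an object of $\VSD$; note that no smoothness of the fibrewise operations is required by the definition of $\VSD$.

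Finally I would verify the colimit universal property: a cocone to $(q:W \ra Y)$ yields a base map $X \ra Y$ from the colimit in $\Diff$, and for each $x\in X$ with image $y$ the fibrewise maps $(V_j)_a \ra W_y$ form a cocone, hence factor uniquely through a linear map $F_x \ra W_y$; these combine into a fibrewise-linear map $V \ra W$, which is smooth because $V$ carries the final diffeology. Uniqueness follows since the coprojections generate each $F_x$ and the diffeology on $V$ is determined by the maps $V_j \ra V$. Conceptually, this realizes $\VSD \ra \Diff$ as a bifibration whose fibres $\mathcal{E}_X$ (vector spaces with diffeology over a fixed base) are complete and cocomplete, and the computations above are the explicit form of the total-category (co)limit; alternatively one could invoke a general (co)completeness theorem for bifibrations with (co)complete base and fibres.
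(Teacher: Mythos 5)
Your proposal is correct and follows essentially the same route as the paper: limits are computed componentwise on total spaces and bases in $\Diff$ (with the fibre vector-space structure coming for free since $\Vect\ra\Set$ preserves limits), while for colimits one takes the base colimit in $\Diff$, forms each fibre as a vector-space colimit over the category of elements lying above that point (your $J_x$ is the paper's $\cC_x$), and equips the disjoint union of fibres with the smallest diffeology making the structure maps smooth. The closing remark about bifibrations is a nice conceptual gloss but does not change the argument.
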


\begin{proof}
Let $F: I \ra \VSD$ be a functor from a small category,
and write $V_i \ra X_i$ for $F(i)$.
There are two functors $t,b:\VSD \ra \Diff$,
sending the above commutative square to $g:V \ra W$ and $f:X \ra Y$, respectively.
Write $\lim V_i$ and $\lim X_i$ for the limits of the functors
$t \circ F$ and $b \circ F$, respectively.
Then it is easy to check that there is a canonical smooth map
$\lim V_i \ra \lim X_i$ which is the limit of $F$ in $\VSD$.

Next, write $X=\colim X_i$ for the colimit of the functor $b \circ F$.
Let $\cC$ be the category of elements of $b \circ F$.
Then $\cC$ has as objects the pairs $(i,a)$ for $i \in \obj(I)$ and $a \in X_i$,
and the morphisms $(i,a) \ra (j,b)$ in $\cC$ are the morphisms $f:i \ra j$ in $I$
such that $b(F(f)): X_i \ra X_j$ sends $a$ to $b$.
There is a natural bijection between $\pi_0(\cC)$ and the underlying set of $X$.
For any $x \in X$, the connected full subcategory $\cC_x$
of $\cC$ corresponding to $x$ consists of the objects $(i,a)$
such that $a \in X_i$ is sent to $x \in X$ by the cocone map $X_i \to X$.
There is a functor $\cC_x \ra \Vect$ sending $(i,a)$ to
$V_{i,a}$, the fibre above $a \in X_i$ in $V_i$, and
sending $f:(i,a) \ra (j,b)$ to $t(F(f))|_{V_{i,a}}:V_{i,a} \ra V_{j,b}$.
Let $V_x$ be the colimit of this functor
and let $V$ be the disjoint union $\coprod_{x \in X} V_x$.
There is a canonical map $V_i \ra V$ for each $i \in \obj(I)$,
and we equip $V$ with the smallest diffeology making these maps smooth.
Then the canonical projection $V \ra X$ is smooth,
and one can check that this is the colimit of $F$.
\end{proof}

We write $\DVS$ for the full subcategory of $\VSD$ with objects
diffeological vector spaces over diffeological spaces (see Definition~\ref{def:dvs}).
Propositions~\ref{prop:dvs-over-X} and~\ref{prop:Hectortodvs} imply that
the forgetful functor $\DVS \ra \VSD$ has a left adjoint,
sending $p:V \ra X$ to $p:\tilde{V} \ra X$.
It is clear that if $p:V \ra X$ is a diffeological vector space over $X$,
then $\tilde{V}=V$.
Therefore, the category $\DVS$ is also complete and cocomplete,
with the limit computed in $\VSD$,
and the colimit obtained by applying the left adjoint functor
to the corresponding colimit in $\VSD$.
Also, Proposition~\ref{pr:dvs-product} says that
the left adjoint commutes with finite products.

\medskip

Recall that $\DS$ is the category with objects all open subsets of $\R^n$
and morphisms smooth maps between them.
The main result of this subsection is now straightforward to check.

\begin{thm}\label{thm:dvs-vs-Hector}
Let $X$ be a diffeological space.
Consider the functor $\DS/X \ra \DVS$ defined by
\[
\vcenter{\xymatrix@C5pt@R-5pt{U \ar[dr]_-p \ar[rr]^f & & V \ar[dl]^-q \\ & X_{\strut} }}
\quad\longmapsto\quad
\vcenter{\xymatrix{TU \ar[r]^{Tf} \ar[d]_{\pi_U} & TV \ar[d]^{\pi_V} \\ U \ar[r]_f & V.}}
\]
The colimit of this functor is $\pi_X: T^{dvs}(X) \ra X$,
and the colimit of the composite functor $\DS/X \ra \DVS \ra \VSD$
is $\pi_X:T^H(X) \ra X$.
\end{thm}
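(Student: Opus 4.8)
The plan is to compute the colimit in $\VSD$ explicitly, recognize it as $\pi_X : T^H(X) \to X$, and then obtain the $\DVS$ statement for free by applying the left adjoint $\widetilde{(-)}$. First I would note that the displayed functor genuinely takes values in $\DVS$: for $U$ open in $\R^n$ the tangent bundle $TU \cong U \times \R^n$ is a smooth manifold, so $\pi_U : TU \to U$ is a diffeological vector space over $U$. Consequently, by the description of colimits in $\DVS$ (computed by applying $\widetilde{(-)}$ to the colimit in $\VSD$), it suffices to identify the colimit of the composite $\DS/X \to \DVS \to \VSD$ with $\pi_X : T^H(X) \to X$; part~(a) then follows because $\widetilde{T^H(X)} = T^{dvs}(X)$ by Definition~\ref{def:dvsdiffeology}.

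To compute the $\VSD$ colimit I would run the three-step recipe from the proof that $\VSD$ is cocomplete. The base diagram $\DS/X \to \VSD \to \Diff$ sends a triangle to $f : U \to V$, so by Proposition~\ref{colim} its colimit is $X$, with cocone maps the plots $p : U \to X$. The total space then carries the smallest diffeology making each canonical map $TU \to V$ smooth; since on the fibre over $a \in U$ this canonical map is $T_a(U) \to T_{p(a)}(X)$, which is exactly Hector's generating map $Tp : TU \to TX$, this smallest diffeology is precisely Hector's diffeology. Thus, modulo the identification of the fibres, the $\VSD$ colimit is $T^H(X)$ with its canonical projection to $X$.

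The one step that needs genuine care, and which I expect to be the main obstacle, is the fibre identification: for each $x \in X$ the fibre of the colimit is $\colim_{\cC_x} T_a(U)$, where $\cC_x$ is the category of pairs $(p : U \to X, a)$ with $p(a) = x$, and I must show this agrees with $T_x(X) = \colim(\DS_0/(X,x) \to \DS_0 \to \Vect)$. I would do this in two cofinality reductions. First, restricting a plot to the connected component $U^\circ$ of $a$ defines a functor $\cC_x \to \cC_x^{\mathrm{conn}}$ that is a retraction onto the full subcategory of connected domains, and the component inclusions $U^\circ \hookrightarrow U$ give a natural transformation inducing isomorphisms on tangent spaces; this reduces the colimit to connected domains. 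Second, on connected domains the translation $g_0 : U \to U - a$, $u \mapsto u - a$, exhibits the centred plot $p(\,\cdot + a)$ together with $g_0$ as an initial object of the relevant comma category, so the functor $\DS_0/(X,x) \to \cC_x^{\mathrm{conn}}$ sending a centred plot $q$ to $(q,0)$ is final. Composing these gives $\colim_{\cC_x} T_a(U) \cong T_x(X)$, and since the identification $T_a(U) \cong T_0(U-a)$ by translation is exactly the map appearing in Hector's diffeology, the fibre identification is compatible with the diffeologies. Assembling the base, the fibres and the diffeology yields part~(b), and part~(a) follows as explained above.
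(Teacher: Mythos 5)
Your proposal is correct and follows exactly the route the paper intends: the paper's entire ``proof'' of this theorem is the remark that it is now straightforward to check, given the explicit construction of colimits in $\VSD$, the left adjoint $\VSD \ra \DVS$ sending $V$ to $\tilde{V}$, and Proposition~\ref{colim} identifying the base colimit with $X$. Your two cofinality reductions for the fibre identification (passing to the connected component of the marked point, where the retraction is a right adjoint and hence final, and then centring by translation, which gives an initial object of the relevant comma category) supply precisely the details the paper omits, and both are sound.
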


This can also be phrased as saying that
the functors $T^H:\Diff \ra \VSD$ and $T^{dvs}:\Diff \ra \DVS$ are
left Kan extensions of the natural functors
$\DS \ra \VSD$ and $\DS \ra \DVS$
along the inclusion functor $\DS \to \Diff$.

\subsection{Fineness of internal tangent spaces}\label{ss:fine}

Recall that any vector space $V$ has a smallest diffeology making it into a
diffeological vector space.
This diffeology is called the \dfn{fine} diffeology,
and its plots are exactly those maps which locally are of the
form $U \to \R^n \to V$, where the first map is smooth and the
second map is linear; see~\cite[Chapter~3]{I3}.
In this subsection, we study when $T^H_x(X)$ and $T^{dvs}_x(X)$ are
fine diffeological vector spaces, giving examples where they are
fine and where they are not.

The material in this subsection is independent of the material in
the following subsection.

\medskip

From Example~\ref{ex:bundleofcross},
we know that $T^H_x(X)$ in general is not a fine diffeological vector space.
We now give a condition implying that $T^{dvs}_x(X)$ is fine.

\begin{prop}\label{prop:fine}
Let $(X,x)$ be a pointed diffeological space.
Assume that there exists a local generating set $G$ of $X$ at $x$ with the property
that for any $q:U \ra X$ in $G$,
if $f : V \to U$ is a smooth map from an open subset $V$ of $\R^n$ such that
$q \circ f = c_x$ (the constant map), then $f$ is locally constant.
Then $T_x^{dvs}(X)$ is a fine diffeological vector space.
\end{prop}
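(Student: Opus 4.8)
The plan is to prove the two inclusions between the dvs diffeology on $T_x(X)$ and the fine diffeology. Since $T_x^{dvs}(X)$ is a diffeological vector space and the fine diffeology is by definition the smallest such, the fine diffeology is automatically contained in the dvs diffeology, so the real work is the reverse inclusion. To get a grip on the dvs plots I would invoke Remark~\ref{rem:alt-description} (with $X$ a point), which identifies $T_x^{dvs}(X)$ with $\widetilde{T_x^H(X)}$ and describes its diffeology as generated by the smooth-coefficient linear combinations $v \mapsto \sum_i r_i(v) P_i(v)$ of plots $P_i$ of $T_x^H(X)$. Because the fine diffeology is itself a vector space diffeology, its plots are preserved by the vector space operations, hence closed under such combinations; so it suffices to prove the single statement that \emph{every plot of $T_x^H(X)$ is a plot of the fine diffeology on $T_x(X)$}.

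So I would fix a plot $P$ of $T_x^H(X)$, that is, a plot of $T^H(X)$ landing in the fibre over $x$, together with a point $v_0$ in its domain. By the description of a generated diffeology, near $v_0$ either $P$ is constant (hence trivially fine) or $P = Tf \circ \psi$ for some plot $f : U \to X$ with $U$ connected and some smooth $\psi = (a, w) : W' \to TU = U \times \R^{\dim U}$; here $f(a(v)) = x$ for all $v$. Writing $a_0 = a(v_0)$, the translated germ $f(\,\cdot + a_0) : (U - a_0, 0) \to (X, x)$ is an object of $\cG(X,x)$, so since $G$ is a local generating set there are $q : (U', 0) \to (X, x)$ in $G$ and a germ of smooth map $h : (U - a_0, 0) \to (U', 0)$ with $f(\,\cdot + a_0) = q \circ h$. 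Setting $\tilde a(v) = h(a(v) - a_0)$ for $v$ near $v_0$ then gives a smooth map satisfying $q \circ \tilde a = c_x$ and $\tilde a(v_0) = h(0) = 0$.

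This is the step where the hypothesis does its work: applying it to $q \in G$ and the map $\tilde a$ forces $\tilde a$ to be locally constant, hence $\tilde a \equiv 0$ near $v_0$. Consequently, when I translate $f$ to $a(v)$ and push forward, the base point $h(a(v) - a_0)$ is constantly $0$, so by functoriality of $T^H$ (the chain rule, together with the translation built into the definition of $T_u f$) the formula collapses to $P(v) = (T_0 q)\bigl(L(v)\, w(v)\bigr)$, where $L(v)$ is the Jacobian of $h$ at $a(v) - a_0$, a smoothly varying linear map, and $T_0 q : T_0(U') \to T_x(X)$ is a fixed linear map. Thus near $v_0$, $P$ factors as $W' \to \R^{\dim U'} = T_0(U') \xrightarrow{\,T_0 q\,} T_x(X)$ with smooth first factor and linear second factor, which is exactly the local form of a fine plot. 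Since $v_0$ was arbitrary, $P$ is fine, which completes the reduction and hence the proof.

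I expect the third paragraph to be the main obstacle. A general Hector plot is built from an arbitrary plot $f$ of $X$, whereas the hypothesis only constrains the members of $G$, so the content lies in transporting the situation through the factorization $f(\,\cdot + a_0) = q \circ h$ and then checking two things: that the transported base-point map $\tilde a$ becomes locally constant, and that the push-forward genuinely collapses to a single fixed linear map $T_0 q$ post-composed with a smooth $\R^{\dim U'}$-valued function. The translation conventions in the definition of $T_u f$ make this bookkeeping the delicate part, but they are precisely what is needed for the chain rule to apply cleanly once $\tilde a \equiv 0$.
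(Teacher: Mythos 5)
Your proposal is correct and follows essentially the same route as the paper's proof: reduce via Remark~\ref{rem:alt-description} to showing that Hector's diffeology on $T_x(X)$ is contained in the fine diffeology, locally factor a generating plot through some $q \in G$, and apply the hypothesis to force the transported base-point map to be locally constant, so that the plot collapses to a smooth map followed by the fixed linear map $q_*$. Your third paragraph simply makes explicit the germ-level bookkeeping that the paper compresses into the phrase ``shrinking $V'$ further if necessary, we can assume that $U \to X$ is in the local generating set $G$.''
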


\begin{proof}
By the second half of Remark~\ref{rem:alt-description}, the
diffeology on $T^{dvs}_x(X)$ is the smallest diffeology containing
Hector's diffeology which makes $T^{dvs}_x(X)$ into a diffeological
vector space.
Thus it suffices to show that Hector's diffeology is contained
in the fine diffeology.
That is, we need to show that every plot $V \ra T^H_x(X)$
locally smoothly factors through a linear map $\R^n \to T^H_x(X)$.
Let $p : V \to T^H_x(X)$ be a plot.
Since Hector's diffeology on $TX$ is generated by maps $TU \to TX$
induced by plots $U \to X$, for each $v_0 \in V$ there is a connected
open neighbourhood $V'$ of $v_0$ in $V$ such that $p|_{V'}$ is either
constant or of the form $V' \llra{g} TU \to TX$, with image in $T_x(X)$,
where $g$ is smooth.
In the first case, every constant map factors smoothly through a
linear map $\R \to T_x(X)$.
In the second case, shrinking $V'$ further if necessary,
we can assume that $U \to X$ is in the local generating set $G$.
The composite $V' \llra{g} TU \to U \to X$ is constant, and so by hypothesis,
$g$ must land in $T_{u}(U)$ for some $u \in U$.
This shows that $p|_{V'}$ smoothly factors through the linear map $T_u(U) \to T_x(X)$.
Since $T_u(U)$ is diffeomorphic to $\R^n$, for some $n$, we are done.
\end{proof}

Here are some results that follow from Proposition~\ref{prop:fine}:

\begin{ex}\label{ex:fineexamples}
$T_x^{dvs}(X)$ is a fine diffeological vector space when:
\begin{enumerate}
\item $X$ is a smooth manifold, and $x$ is any point of $X$;
\item $X$ is the axes in $\R^2$ with the pushout diffeology (Example~\ref{ex:cross-pushout}),
and $x$ is any point of $X$;
\item\label{tangentirrtorus} $X$ is a $1$-dimensional irrational torus (Example~\ref{irrtorus}),
and $x$ is any point of $X$.
\end{enumerate}
\end{ex}

\begin{rem}\label{rem:inverse}
Since the irrational torus $T^2_\theta$ is a diffeological group, by Theorem~\ref{thm:Tdiffgrptrivial}
we know that $T^H_{[0]}(T^2_\theta) \cong T^{dvs}_{[0]}(T^2_\theta)$.
So by (\ref{tangentirrtorus}) of the above example, both of them have the fine diffeology.
Therefore, the map $T_0(\R) \ra T_{[0]}(T^2_\theta)$ induced by the
quotient map $\R \to T^2_\theta$ from Example~\ref{irrtorus} is a linear diffeomorphism.
As a consequence, the inverse function theorem does not hold for general diffeological spaces
when tangent spaces are equipped with either Hector's diffeology or the dvs diffeology,
in the sense that, if $f:A \ra B$ is a smooth map between diffeological spaces such that
$f_*:T_a^H(A) \ra T_{f(a)}^H(B)$ (or $f_*:T_a^{dvs}(A) \ra T_{f(a)}^{dvs}(B)$) is a linear diffeomorphism for some $a \in A$,
then it is not true that there exist $D$-open neighborhoods $A'$ and $B'$ of $a \in A$ and $f(a) \in B$, respectively,
such that $f:A' \ra B'$ is a diffeomorphism.
\end{rem}

\begin{ex}
Note that any smooth linear bijection from a diffeological vector space
to a fine diffeological vector space is a linear diffeomorphism.
As a consequence, we know that the tangent space $T_0^{dvs}(Y)$ of the axes $Y$
in $\R^2$ with the sub-diffeology (Example~\ref{ex:cross-sub}) has the fine diffeology.
\end{ex}

\begin{prop}\label{prop:finedvs}
Let $V$ be a fine diffeological vector space.
Then $T_0(V) \cong V$ as diffeological vector spaces.
\end{prop}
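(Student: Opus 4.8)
The plan is to construct an explicit linear isomorphism $\phi : V \to T_0(V)$ and show it is a diffeomorphism. Since $V$ is a diffeological vector space, hence a diffeological group under addition, Remark~\ref{rem:diff-group} and Theorem~\ref{thm:Tdiffgrptrivial} tell us that $T_0(V)$ is a diffeological vector space and that Hector's and the dvs diffeologies agree on it, so there is no ambiguity in writing $T_0(V)$. I would define $\phi(v) = (p_v)_*(\ddt)$, where $p_v : \R \to V$ is the smooth curve $p_v(t) = tv$.

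First I would check that $\phi$ is linear. Homogeneity is immediate from $p_{\lambda v} = p_v \circ (t \mapsto \lambda t)$, and additivity follows from the plot $q : \R^2 \to V$, $q(s,t) = sv + tw$, together with the relation $\Delta_*(\ddt) = \frac{d}{ds} + \frac{d}{dt}$ for the diagonal $\Delta$, exactly as in the proof of Proposition~\ref{product}. Smoothness of $\phi$ is then automatic, since a linear map out of a fine diffeological vector space into any diffeological vector space is smooth: plots of $V$ locally factor as $U \to \R^n \to V$ with the second map linear.

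Next I would establish bijectivity. For injectivity, I would use that every (algebraically) linear functional $\ell : V \to \R$ is smooth, again because $V$ is fine, and that such functionals separate points of $V$. Since $\ell_*(\phi(v)) = (\ell \circ p_v)_*(\ddt) = \ell(v)$, the vanishing of $\phi(v)$ forces $\ell(v)=0$ for all $\ell$, hence $v=0$. For surjectivity, I would invoke the $1$-representability of $T_0(V)$ from Remark~\ref{rem:diff-group}: every tangent vector is $p_*(\ddt)$ for a single curve $p$ with $p(0)=0$. Writing $p = L \circ \psi$ near $0$ with $\psi : \R \to \R^n$ smooth, $\psi(0)=0$, and $L : \R^n \to V$ linear (using fineness), a short computation gives $p_*(\ddt) = L_*(\psi'(0)) = \phi(L(\psi'(0)))$.

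The remaining and main point is to show that $\phi^{-1}$ is smooth; then the Example immediately preceding this proposition, that a smooth linear bijection onto a fine diffeological vector space is a linear diffeomorphism, finishes the proof. I would verify smoothness of $\mu := \phi^{-1}$ on the generators of Hector's diffeology. A plot of $T_0(V)$ locally factors through a generating plot $Tf : TU \to T^H(V)$ for some plot $f : U \to V$, or is constant. Choosing a local factorization $f = L \circ h$ with $h : U \to \R^n$ smooth and $L$ linear, a direct computation identifies $\mu(Tf(u,\xi)) = L(dh_u(\xi))$ on the locus $f(u)=0$; since $(u,\xi) \mapsto L(dh_u(\xi))$ is the composite of the smooth differential $dh : TU \to \R^n$ with the smooth linear map $L$, it is smooth on all of $TU$, and smoothness being local, $\mu \circ Tf$ is smooth. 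The main obstacle is precisely this last step: one must unwind Hector's diffeology on $T_0(V)$ and combine it with the local normal form of plots of a fine vector space to recognize $\phi^{-1}$ as the honest derivative $p \mapsto p'(0)$ and confirm its smoothness.
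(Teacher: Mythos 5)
Your proof is correct, but it takes a genuinely different route from the paper's. The paper never writes down an explicit map: it observes that the inclusions $W \hookrightarrow V$ of finite-dimensional linear subspaces form a final subcategory of $\cG(V,0)$, so that $T_0(V) \cong \colim_W T_0(W) \cong \colim_W W \cong V$ as vector spaces; it then applies the criterion of Proposition~\ref{prop:fine} to conclude that $T_0^{dvs}(V)$ carries the fine diffeology, whence the linear isomorphism is automatically an isomorphism of diffeological vector spaces, and finally invokes Theorem~\ref{thm:Tdiffgrptrivial} to identify Hector's diffeology with the dvs one. You instead exhibit the isomorphism $v \mapsto (t \mapsto tv)_*(\ddt)$ and verify everything by hand: linearity via the plot $(s,t) \mapsto sv+tw$ and the diagonal trick, injectivity via smooth linear functionals (all of which are smooth on a fine space and separate points), surjectivity via $1$-representability and the local normal form $L \circ \psi$ of plots of a fine space, and smoothness of the inverse on Hector's generating plots. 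These steps all check out; in particular the identity $T_u f(\xi) = L_*(dh_u(\xi)) = \phi(L(dh_u(\xi)))$ on the locus $f(u)=0$ is correct once one translates so that the relevant germs are pointed. Two small cosmetic points: the local factorization of a plot of a fine space only gives $L(\psi(0))=0$, not $\psi(0)=0$, so you should replace $\psi$ by $\psi - \psi(0)$; and once both $\phi$ and $\phi^{-1}$ are shown smooth, the appeal to the preceding Example is redundant. What your approach buys is an explicit description of the isomorphism and of its inverse as the derivative-at-$0$ map, which is useful for computations; what the paper's approach buys is brevity, since the finality argument plus Proposition~\ref{prop:fine} dispose of both the vector-space identification and the diffeology in a few lines, obtaining the fineness of $T_0^{dvs}(V)$ as a statement of independent interest rather than checking plots one by one.
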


\begin{proof}
Recall that the vector space $T_0(V)$ is the colimit of $T_0(U)$
taken over the category $\cG(V,0)$.
Consider the full subcategory $\cG$ of $\cG(V,0)$
consisting of inclusions $W \hookrightarrow V$ of finite-dimensional linear subspaces.
It is not hard to see that this is a final subcategory, in the sense that
the overcategory $p/\cG$ is non-empty and connected for each
object $p$ in $\cG(V,0)$.
Therefore, we can restrict to the subcategory $\cG$
and find that, as vector spaces,
\[
  T_0(V) \cong \colim_{W \in \cG} T_0(W) \cong \colim_{W \in \cG} W \cong V.
\]
By the criteria in Proposition~\ref{prop:fine}, we know that $T_0^{dvs}(V)$
is a fine diffeological vector space, and so $T_0^{dvs}(V) \cong V$ as
diffeological vector spaces.
By Theorem~\ref{thm:Tdiffgrptrivial},
$T_0^H(V) = T_0^{dvs}(V)$ as diffeological vector spaces.
\end{proof}

\begin{rem}
When $V$ is fine, it follows from Theorem~\ref{thm:Tdiffgrptrivial} that
$TV \cong V \times V$ as
diffeological vector spaces over $V$.
This is not true for an arbitrary diffeological vector space;
see Example~\ref{(in)dis}(2).
\end{rem}

\begin{ex}\label{ex:notfine}
The following examples show that when $G$ is a diffeological group,
$T_e(G)$ is not necessarily fine, contradicting the argument given
in~\cite[Proposition~6.8]{HM}.
As a consequence, although the vector space $T_e(G)$ is a colimit in the category
of vector spaces of $T_0(U)$, the diffeological vector space $T_e(G)$ is not a colimit in the category
of diffeological vector spaces of $T_0(U)$ with the fine diffeology.
For properties of (fine) diffeological vector spaces, see~\cite{Wu}.

\begin{enumerate}
\item \label{item:inftyprod} Let $X = \prod_\omega \R$ be the countable product of copies of $\R$
with the product diffeology.
Then there is a canonical smooth map $TX \ra \prod_\omega T \R$
which induces a smooth linear map $\alpha:T_0(X) \ra \prod_\omega \R$,
where we identify $T_0 (\R)$ with $\R$ in the natural way.
Consider the plot $p: \R^2 \to X$ sending $(s,t)$ to $(t, st, s^2t, s^3t, \ldots)$.
It induces one of Hector's generating plots $T\R^2 \to TX$.
The restriction of $T \R^2$ to the $s$-axis is a rank-$2$ bundle
whose total space $U$ is diffeomorphic to $\R^3$.
The composite $U \hookrightarrow T\R^2 \to TX$ gives a plot $q$ in $T_0(X)$,
since the plot $p$ sends the $s$-axis to the point $0$ in $X$.
Now consider the composite $\alpha \circ q: U \ra T_0(X) \ra \prod_\omega \R$.
At the point $(s,0)$, it sends $\partial_s$ to $0$
and $\partial_t$ to $(1, s, s^2, s^3, \ldots)$.
As $s$ varies, these $(1, s, s^2, s^3, \ldots)$'s are all
linearly independent in $\prod_\omega \R$.
Since $\alpha$ is linear,
$q$ doesn't locally factor through any finite-dimensional linear subspace of $T_0(X)$.
Therefore, the diffeology on $T_0(X)$ is strictly larger than the fine diffeology.
It follows from Theorem~\ref{thm:Tdiffgrptrivial} that the same is true for $T_x(X)$ for any $x \in X$,
since $X$ is a diffeological group (in fact, a diffeological vector space).

While it isn't needed above, one can also show that $\alpha$ is an
isomorphism, so $T_0(X) \cong X$ as vector spaces.
The surjectivity follows from Remark~\ref{rem:infinite-product},
and the injectivity follows from the fact that $X$ is a diffeological group,
using an argument similar to that used in Example~\ref{halfline:sub}.
In fact, Proposition~\ref{prop:isots} shows that $\alpha$ is a diffeomorphism.

\item Write $Y$ for $C^\infty(\R^n,\R)$, where $n \geq 1$,
and write $\hat{0}$ for the zero function in $Y$.
Define $\phi:Y \ra X$ by $f \mapsto (f(0),\frac{\partial f}{\partial y_1}(0),\frac{\partial^2 f}{\partial y_1^2}(0),\ldots)$.
Then $\phi$ is a smooth map, by~\cite[Lemma~4.3]{CSW}.
Let $p:\R^2 \ra Y$ be defined by $p(s,t)(y)=te^{sy_1}$.
Then $p$ is a plot of $Y$.
The restriction of $T\R^2$ to the $s$-axis is a rank-$2$ bundle
whose total space $U$ is diffeomorphic to $\R^3$.
As in~(1), one can check that the composite
$U \hookrightarrow T\R^2 \ra TY$ gives a plot $q$ in $T_{\hat{0}}(Y)$,
but the composite
$\alpha \circ T_{\hat{0}}\phi \circ q:
U \ra T_{\hat{0}}(Y) \ra T_0(X) \ra \prod_\omega \R$
does not locally factor through a finite-dimensional linear subspace
of $\prod_\omega \R$.
Therefore, $T_f(Y)$ is not a
fine diffeological vector space for any $f \in Y$.

\item Let $M$ be a smooth manifold of positive dimension,
and let $i:\R^n \ra M$ be a smooth chart.
Using a smooth partition of unity one can show that for any smooth map $f:\R^2 \times \R^n \ra \R$,
there exists a smooth map $g:\R^2 \times M \ra \R$ such that
$g \circ (1 \times i)|_{\R^2 \times B}=f|_{\R^2 \times B}$,
where $B$ is an open neighborhood of $0$ in $\R^n$.
By (2), we know that $T_f(C^\infty(M,\R))$
is not a fine diffeological vector space
for any $f \in C^\infty(M,\R)$.
\end{enumerate}

As a corollary of (3),
if $M$ is a compact smooth manifold of positive dimension,
and $U$ is an open subset of $\R^n$ for some $n \in \Z^+$,
then for any $f \in C^\infty(M,U)=:Z$,
both $T^H_f(Z)$ and $T^{dvs}_f(Z)$ are not fine.
This follows from
Proposition~\ref{prop:subandproductforbundles}(1) and the fact that
$Z=C^\infty(M,U)$ is a $D$-open subset of
$C^\infty(M,\R^n)$~\cite[Proposition~4.2]{CSW}.
\end{ex}

\subsection{Internal tangent bundles of function spaces}\label{ss:function}

In this subsection, we first observe in Proposition~\ref{prop:map}
that for any diffeological spaces $X$ and $Y$,
there is always a natural smooth map $\gamma : T^H(C^\infty(X,Y)) \ra C^\infty(X,T^H (Y))$.
In Propositions~\ref{prop:isots} and~\ref{prop:cpt}, we give two special cases when
the above map is actually a diffeomorphism,
and it follows that in these cases $T^H(C^\infty(X,Y))=T^{dvs}(C^\infty(X,Y))$.
In particular, we recover in Corollary~\ref{cor:cpt}
the fact that the internal tangent space
of the diffeomorphism group of a compact smooth manifold at the identity
is isomorphic to the vector space of all smooth vector fields on the manifold.

Stacey~\cite{St2} also studies the map $\gamma$, but with a different focus.

\begin{prop}\label{prop:map}
Let $X$ and $Y$ be diffeological spaces.
There is a smooth map
\[
\gamma:T^H(C^\infty(X,Y)) \lra C^\infty(X,T^H (Y)),
\]
which is natural in $X$ and $Y$,
and which makes the following triangle commutative:
\[
\xymatrix@C5pt{T^H(C^\infty(X,Y)) \ar[rr]^\gamma \ar[dr]_{\pi_{C^\infty(X,Y)}}
& & C^\infty(X,T^H(Y)) \ar[dl]^{(\pi_Y)_*} \\ & C^\infty(X,Y).}
\]
\end{prop}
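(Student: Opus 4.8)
The plan is to construct $\gamma$ as the adjoint, under the cartesian closed structure of $\Diff$, of a smooth map $\tilde\gamma\colon T^H(C^\infty(X,Y)) \times X \to T^H(Y)$. First I would build $\tilde\gamma$ as the composite
\[
\begin{aligned}
T^H(C^\infty(X,Y)) \times X
  &\xrightarrow{\,\id \times \sigma\,}
  T^H(C^\infty(X,Y)) \times T^H(X) \\
  &\xrightarrow{\ \cong\ }
  T^H(C^\infty(X,Y) \times X)
  \xrightarrow{\,T^H(\mathrm{ev})\,}
  T^H(Y),
\end{aligned}
\]
where $\sigma\colon X \to T^H(X)$ is the zero section, the middle map is the inverse of the natural diffeomorphism of Proposition~\ref{prop:subandproductforbundles}(\ref{productforbundles}), and $\mathrm{ev}\colon C^\infty(X,Y) \times X \to Y$ is the (smooth) evaluation map. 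Each arrow is smooth, so $\tilde\gamma$ is smooth, and taking its adjoint produces a smooth map $\gamma\colon T^H(C^\infty(X,Y)) \to C^\infty(X,T^H(Y))$; in particular each $\gamma(f,v)$ is automatically a smooth map $X \to T^H(Y)$. Unwinding the definitions (or checking on a representable tangent vector $p_*(\ddt)$ as in Proposition~\ref{lgcurve}), $\tilde\gamma$ sends $((f,v),x)$ to $(\mathrm{ev}_x)_*(v) \in T_{f(x)}(Y)$, where $\mathrm{ev}_x\colon C^\infty(X,Y) \to Y$ is evaluation at $x$; equivalently, $\gamma(f,v)$ is the map $x \mapsto (f(x),(\mathrm{ev}_x)_*(v))$.

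For the triangle, I would verify commutativity after passing to adjoints, since $(\pi_Y)_* \circ \gamma$ is the adjoint of $\pi_Y \circ \tilde\gamma$. Using the naturality of $\pi\colon T^H \to 1$, the fact that the product isomorphism of Proposition~\ref{prop:subandproductforbundles}(\ref{productforbundles}) commutes with the projections, and the identity $\pi_X \circ \sigma = \id_X$, the composite $\pi_Y \circ \tilde\gamma$ collapses to $\mathrm{ev} \circ (\pi_{C^\infty(X,Y)} \times \id_X)$. This is precisely the adjoint of $\pi_{C^\infty(X,Y)}$, so the bijectivity of the adjunction yields $(\pi_Y)_* \circ \gamma = \pi_{C^\infty(X,Y)}$.

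For naturality, I would observe that every ingredient of $\tilde\gamma$ is natural: the evaluation map is natural in both variables, $\sigma$ and $\pi$ are natural transformations, the product isomorphism is natural, $T^H$ is a functor, and the adjunction is natural. Chasing a map $\phi\colon Y \to Y'$ (covariant naturality in $Y$, via postcomposition) and a map $\psi\colon X' \to X$ (contravariant naturality in $X$, via precomposition) through the composite then produces the two required commuting squares. This bookkeeping is the most labor-intensive part, with the care concentrated on the contravariant $X$-variable where one must use the compatibility $\mathrm{ev}^{X'} \circ (\psi^* \times \id_{X'}) = \mathrm{ev}^{X} \circ (\id \times \psi)$.

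The main obstacle is conceptual rather than computational: the temptation is to define $\gamma$ directly on the representable generators $p_*(\ddt)$, which would force separate verifications that the assignment respects the basic relations and that the resulting family of maps $X \to T^H(Y)$ is smooth and varies smoothly in $(f,v)$. Packaging $\gamma$ instead as the adjoint of the functorial composite above sidesteps both well-definedness and smoothness at once, reducing the entire argument to the formal properties already established for $T^H$, the evaluation map, the zero section, and the product isomorphism.
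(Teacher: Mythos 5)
Your proof is correct, but it takes a genuinely different route from the paper's. The paper constructs $\gamma$ by hand on representable generators: it first shows that $\tau:C^\infty(\R,Y)\to T^H(Y)$, $\alpha\mapsto(\alpha(0),\alpha_*(\ddt))$, is smooth, sets $\gamma(f,p_*(\ddt))=\tau\circ\hat p$ where $\hat p:X\to C^\infty(\R,Y)$ is the double adjoint of the plot $p$, verifies the triangle for this partial definition so that it can be extended linearly on each fibre, and then checks well-definedness and smoothness separately, the latter via an explicit commutative diagram involving the generating plots $TU\to T^H(C^\infty(X,Y))$ and the zero section of $TV$. Your construction instead packages $\gamma$ as the adjoint of the composite $T^H(C^\infty(X,Y))\times X\to T^H(C^\infty(X,Y))\times T^H(X)\cong T^H(C^\infty(X,Y)\times X)\to T^H(Y)$, so that smoothness and well-definedness come for free from the functoriality of $T^H$, the zero section, Proposition~\ref{prop:subandproductforbundles}(\ref{productforbundles}) and cartesian closedness, and the triangle and naturality reduce to formal diagram chases. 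The two maps agree: both are fibrewise linear (yours because every constituent of $\tilde\gamma$ is linear on fibres), and unwinding the product isomorphism of Proposition~\ref{product} on $(v,0)$ gives $(i_1)_*(v)$ with $\mathrm{ev}\circ i_1=\mathrm{ev}_x$, so both send $(f,p_*(\ddt))$ to $x\mapsto(f(x),(\mathrm{ev}_x\circ p)_*(\ddt))$. What the paper's hands-on approach buys is the explicit description of $\gamma$ on representables that is used later in Propositions~\ref{prop:isots} and~\ref{prop:cpt}; what yours buys is the elimination of the separate well-definedness and smoothness verifications. Since you also supply the unwinding on representables, nothing downstream is lost.
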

\begin{proof}
Observe that the map $\tau:C^\infty(\R,Y) \ra T^H(Y)$ defined by
$\alpha \mapsto (\alpha(0),\alpha_* (\frac{d}{dt}))$ is smooth,
since for any plot $q:U \ra C^\infty(\R,Y)$,
we have a commutative diagram
\[
\xymatrix{U \ar[d] \ar[r]^-q & C^\infty(\R,Y) \ar[d]^-\tau \\
          TU \times T \R \ar[r]_-{T \tilde{q}} & T^H(Y)}
\]
in $\Diff$,
where the left vertical map is defined by $u \mapsto ((u,0),(0,\frac{d}{dt}))$,
and $\tilde{q}:U \times \R \ra Y$ is the adjoint of $q$.

Now we first partially define $\gamma:T^H(C^\infty(X,Y)) \ra C^\infty(X,T^H (Y))$
by sending $(f,p_*(\frac{d}{dt}))$ to $\tau \circ \hat{p}$,
where $p:\R \ra C^\infty(X,Y)$ is a plot such that $p(0)=f$,
and $\hat{p}$ is the double adjoint of $p$,
using the cartesian closedness of $\Diff$.
One can show that the following triangle
\[
\xymatrix@C5pt{T^H(C^\infty(X,Y)) \ar[rr]^\gamma \ar[dr]_{\pi_{C^\infty(X,Y)}} & & C^\infty(X,T^H(Y)) \ar[dl]^{(\pi_Y)_*} \\ & C^\infty(X,Y),}
\]
commutes, so we can linearly extend this on each fibre to define $\gamma$.
It is straightforward to check that the map $\gamma$ is well-defined.
Moreover, $\gamma$ is smooth:
given any plots $q:U \ra C^\infty(X,Y)$ and $r:V \ra X$, the composite
\[
\xymatrix{TU \ar[r]^-{Tq} & T^H(C^\infty(X,Y)) \ar[r]^\gamma & C^\infty(X,T^H(Y))}
\]
is smooth since for the adjoint map $TU \times X \ra T^H(Y)$
we have a commutative diagram
\[
\xymatrix{TU \times V \ar[r]^{1 \times r} \ar[d]_{1 \times \sigma} & TU \times X \ar[d] \\ TU \times TV \ar[r]_{T \beta} & T^H(Y),}
\]
where $\sigma$ is the zero section, and $\beta$ is the composite
\[
\xymatrix{U \times V \ar[r]^{1 \times r} & U \times X \ar[r]^-{\tilde{q}} & Y.}
\]
The naturality of $\gamma$ directly follows from its definition.
\end{proof}

\begin{aside}
Note that when $X$ is discrete, the map $\gamma$ just defined
coincides with the map $\alpha$ from Remark~\ref{rem:infinite-product}
in the case where each $X_j$ is equal to $Y$.
In fact, there is a framework which encompasses both the map $\gamma$ in the previous proposition
and the map $\alpha$ in Remark~\ref{rem:infinite-product} as special cases.
Let $f:Y \ra X$ be a fixed smooth map between diffeological spaces.
Write $\Gamma(f)$ for the set of all smooth sections of $f$ equipped with the sub-diffeology of $C^\infty(X,Y)$,
and write $A$ for the set $\{g \in C^\infty(X,T^H(Y)) \mid \pi_Y \circ g \in \Gamma(f) \text{ and } f_*(g(x))=(x,0) \text{ for all } x \in X\}$ of ``vertical'' sections of $T^H(Y) \to X$,
equipped with the sub-diffeology of $C^\infty(X,T^H(Y))$.
Then $(\pi_Y)_*:C^\infty(X,T^H(Y)) \ra C^\infty(X,Y)$ restricts to a smooth map $(\pi_Y)_*:A \ra \Gamma(f)$.
Moreover, $\gamma:T^H(C^\infty(X,Y)) \ra C^\infty(X,T^H(Y))$ in the previous proposition induces a smooth map $\gamma:T^H(\Gamma(f)) \ra A$,
and we have a commutative triangle
\[
\xymatrix@C5pt{T^H(\Gamma(f)) \ar[rr]^-{\gamma} \ar[dr]_{\pi_{\Gamma(f)}} && A \ar[dl]^{(\pi_Y)_*} \\ & \Gamma(f).}
\]
In particular,
if the fixed map $f$ is the projection $\pr_1:X \times Y \ra X$, then we recover the previous proposition.
And if $J$ is a discrete diffeological space, $\{X_j\}_{j \in J}$ is a set of diffeological spaces,
and the fixed map $f: \coprod_{j \in J} X_j \ra J$ sends each $X_j$ to $j$,
then the map $\gamma:T^H(\Gamma(f)) \ra A$ discussed above is the same as the map $\alpha$ discussed in Remark~\ref{rem:infinite-product}.
\end{aside}

We will now focus on cases in which $T^H(Y) = T^{dvs}(Y)$, so we write $TY$
to simplify the notation.
In this situation, it is straightforward to show that
$(\pi_Y)_*: C^\infty(X,TY) \ra C^\infty(X,Y)$
is a diffeological vector space over $C^\infty(X,Y)$.
Given a smooth map $f : X \to Y$, we write
$S_f(X,Y)=\{ g \in C^\infty(X, \, TY) \mid \pi_Y \circ g=f \}$
for the fibre of $(\pi_Y)_*$ over $f$, a diffeological vector space.
The map $\gamma$ restricts to a natural smooth linear map
$\gamma_f: T_f^H(C^\infty(X,Y)) \ra S_f(X,Y)$.

\begin{prop}\label{prop:isots}
Let $X$ be a diffeological space.
The map $\gamma:T(C^\infty(X,\R^n)) \ra C^\infty(X,T\R^n)$
from Proposition~\ref{prop:map} is an isomorphism of diffeological
vector spaces over $C^\infty(X,\R^n)$.
\end{prop}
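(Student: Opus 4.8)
The plan is to identify both sides explicitly, write down a candidate inverse, and then reduce the statement to two facts: a fibrewise bijectivity statement and the smoothness of the inverse. Write $V = C^\infty(X,\R^n)$; with the functional diffeology this is a diffeological vector space, hence a diffeological group, so by Remark~\ref{rem:diff-group}(1) every internal tangent vector of $V$ is $1$-representable, and by Theorem~\ref{thm:Tdiffgrptrivial} Hector's and the dvs diffeologies on $TV$ agree (justifying the notation $T$). On the target side, $T\R^n \cong \R^n \times \R^n$, so $C^\infty(X,T\R^n) \cong V \times V$ as a diffeological vector space over $V$, the first factor being the base, with $S_f(X,\R^n)$ identified with $V$ via the ``vector part''. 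Unwinding the formula in Proposition~\ref{prop:map}, $\gamma$ sends $p_*(\ddt) \in T_f(V)$ to the velocity $x \mapsto \frac{d}{dt} p(t)(x)|_{t=0} \in V$.

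First I would define the candidate inverse $\delta$ directly: given $(f,w) \in V \times V$, set $\delta(f,w) = p_*(\ddt)$, where $p$ is the affine curve $p(t) = f + t w$ in $V$. This is a genuine smooth curve through $f$ since the vector space operations of $V$ are smooth, and one checks immediately that $\gamma \circ \delta = \id$, which gives fibrewise surjectivity. Next I would prove fibrewise injectivity, which is the heart of the argument. Suppose $v = p_*(\ddt) \in T_f(V)$, using $1$-representability, maps to $0$, i.e.\ $p$ has vanishing velocity. Passing to adjoints and applying a version of Hadamard's lemma uniform in the parameter $x \in X$ (using that $\R$-direction partial derivatives of smooth maps $\R \times X \to \R^n$ are again smooth, cf.~\cite[Lemma~4.3]{CSW}), I can write $p(t) = f + t^2 r(t)$ for a smooth curve $r : \R \to V$. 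Then, exactly as in Example~\ref{halfline:sub}, the plot $q(x,y) = f + y^2 r(x)$ of $V$ restricts to $p$ on the diagonal, so $p_*(\ddt) = q_*(\frac{d}{dx}) + q_*(\frac{d}{dy})$; the $x$-axis restriction is the constant $f$ and the $y$-axis restriction $y \mapsto f + y^2 r(0)$ is even, so both summands vanish and $v = 0$. Together with surjectivity this shows $\gamma$ is a bijection, forcing $\delta = \gamma^{-1}$.

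It then remains to prove that $\delta$ is smooth. A plot $U \to C^\infty(X,T\R^n)$ corresponds by adjointness to a pair of plots $\bar G_1, \bar G_2 : U \to V$, the base and vector parts. I would form the plot $P : U \times \R \to V$, $P(u,t) = \bar G_1(u) + t\,\bar G_2(u)$, which is smooth because the operations of $V$ are. Shrinking $U$ so that $U \times \R$ is connected, $TP : T(U \times \R) \to T^H(V)$ is one of Hector's generating plots, hence smooth, and I would precompose it with the smooth section $\iota : U \to T(U\times\R)$, $u \mapsto ((u,0),\ddt)$, selecting the $\R$-direction at $(u,0)$. A direct check shows $TP \circ \iota$ equals $\delta$ applied to the original plot, so $\delta$ carries plots to plots and is therefore smooth. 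Since $\gamma$ is a fibrewise-linear bijection over the identity of $V$, smooth with smooth inverse, it is an isomorphism of diffeological vector spaces over $V$.

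I expect the main obstacle to be the fibrewise injectivity, namely the lemma that a zero-velocity curve through $f$ represents the zero internal tangent vector; this is where the genuinely diffeological input lives, and it relies on the uniform Hadamard factorization together with the even-function trick of Example~\ref{halfline:sub}. The smoothness of $\delta$ is the second nontrivial point, but once one recognizes $\delta$ on plots as a restriction of a generating plot $TP$ along the section $\iota$, it is essentially forced.
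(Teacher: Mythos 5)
Your proposal is correct and follows essentially the same route as the paper: fibrewise surjectivity via the affine curves $t \mapsto f + tw$, fibrewise injectivity by showing a zero-velocity curve represents the zero internal tangent vector via a two-dimensional plot (the trick of Example~\ref{halfline:sub}), and smoothness of the inverse by recognizing it on plots as a restriction of the generating plot coming from the affine homotopy $(t,u) \mapsto \bar G_1(u) + t\,\bar G_2(u)$. The only cosmetic difference is in the injectivity step: the paper factors out a single $t$, writing $\tilde p(t,x) = f(x) + t\,g(t,x)$ with $g(0,x)=0$, and uses the plot $q(t_1,t_2) = f + t_1 g(t_2,\cdot)$ whose restrictions to \emph{both} axes are constant, whereas you factor out $t^2$ and invoke the even-function trick on one axis --- both work.
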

In particular, taking the case where $X$ is discrete shows that
$T(\prod_{j \in X} \R^n) \cong \prod_{j \in X} T\R^n$ as diffeological vector spaces over $\prod_{j \in X} \R^n$;
see Proposition~\ref{product}, Remark~\ref{rem:infinite-product}
and Example~\ref{ex:notfine}(1).

\begin{proof}
We first prove that for each $f \in C^\infty(X,\R^n)$,
the restriction $\gamma_f:T_f(C^\infty(X,\R^n)) \ra S_f(X,\R^n)$ of $\gamma$
to the fibre over $f$ is an isomorphism of vector spaces.

Since in this case, $\gamma$ is a smooth map between diffeological
vector spaces over $C^\infty(X,\R^n)$,
it is enough to prove that $\gamma_f$ is a bijection for $n=1$.

We first prove injectivity.
Note that $C^\infty(X,\R)$ is a diffeological group.
Corollary~6.5 of~\cite{HM} says that for any diffeological group $G$,
every element of $T_e(G)$ comes from a plot $U \ra G$.
(Compare with Example~\ref{ex:bundleofcross}.)
Since $T_e(G) \cong T_g(G)$, the same is true for $T_g(G)$.
By the proof of Proposition~\ref{lgcurve}, every internal tangent vector
is in fact of the form $p_*(\frac{d}{dt})$, where $p : \R \to G$ is
a plot with $p(0) = g$.
Also note that:

(1) If $\alpha:U \times X \ra \R$ is smooth
for $U$ some open subset of a Euclidean space,
then $\frac{\partial \alpha}{\partial u_i}:U \times X \ra \R$
is also smooth for any coordinate $u_i$ in $U$,
since for any plot $\beta:W \ra X$, $\frac{\partial \alpha}{\partial u_i} \circ (1_U \times \beta)=\frac{\partial}{\partial u_i}(\alpha \circ (1_U \times \beta))$.

(2) If $K$ is a compact subset of $U$,
$\phi:K \ra \R$ is an integrable function,
and $F:U \times X \ra \R$ is smooth,
then by~\cite[Theorem~V.2.9.9]{G},
the function $X \ra \R$ defined by $\int_K \phi(s) F(s,x) \, ds$ is smooth.

Now assume that a plot $p:\R \ra C^\infty(X,\R)$
with $p(0)=f$ induces $0 \in S_f(X,\R)$;
that is, the adjoint map $\tilde{p}:\R \times X \ra \R$ is smooth,
$\tilde{p}(0,x)=f(x)$ and $\frac{\partial \tilde{p}}{\partial t}(0,x)=0$
for all $x \in X$.
We can conclude that $\tilde{p}(t,x)=f(x)+tg(t,x)$
for $g(t,x)=\int_0^1 (D_1 \tilde{p})(st,x) \, ds$,
and this $g$ is in $C^\infty(\R \times X,\R)$ by (1) and (2).
Note that $g(0,x)=0$ for all $x \in X$.
Then we define $q : \R^2 \ra C^\infty(X,\R)$ by
$q(t_1,t_2)(x) = f(x)+t_1 g(t_2,x)$.
The restriction of $q$ to
either axis is a constant map $\R \ra C^\infty(X,\R)$,
so $q_*(\frac{d}{dt_1}) = 0 = q_*(\frac{d}{dt_2})$ in $T_f(C^\infty(X,\R))$.
And the restriction of $q$ to the diagonal is $p$, so,
as in the argument given in Example~\ref{halfline:sub},
$p_*(\frac{d}{dt}) = 0$ in $T_f(C^\infty(X,\R))$.

For surjectivity,
take any $(f,g) \in S_f(X,\R)$; that is $g:X \ra \R$ is any smooth map.
Define $p:\R \ra C^\infty(X,\R)$ by $t \mapsto (x \mapsto f(x)+tg(x))$.
It is straightforward to check that $p$ is a plot with $p(0)=f$ and
that the map $\gamma_f:T_f(C^\infty(X,\R)) \ra S_f(X,\R)$
sends $p_*(\frac{d}{dt})$ to $g$.

Hence, together with Proposition~\ref{prop:map},
we have proved so far that the  map $\gamma:T(C^\infty(X,\R^n)) \ra C^\infty(X,T\R^n)$
is a smooth bijection and linear on each fibre.
We claim that in this case, $\gamma^{-1}$ is also smooth, and hence
$\gamma$ is an isomorphism of diffeological vector spaces over $C^\infty(X,\R^n)$.
Here is the proof.
Notice that it is enough to prove this for $n=1$.
For any plot $q=(q_1,q_2):U \ra C^\infty(X,T\R)$, we define
a smooth map $\rho:\R \times U \ra C^\infty(X,\R)$
by $\rho(t,u)=q_1(u)+tq_2(u)$.
The smoothness of $\gamma^{-1}$ follows from the commutative diagram
\[
\xymatrix{U \ar[d] \ar[r]^-q & C^\infty(X,T\R) \ar[d]^(0.45){\gamma^{-1}} \\ T\R \times TU \ar[r]_-{T\rho} & T(C^\infty(X,\R)),}
\]
where the left vertical map is given by $u \mapsto ((0,\frac{d}{dt}),(u,0))$.
\end{proof}

As direct consequences,
(i) the map $\gamma_f:T_f(C^\infty(X,\R^n)) \ra S_f(X,\R^n)$ is
an isomorphism between diffeological vector spaces;
(ii) $C^\infty(X,\R^n) \times C^\infty(X,\R^n) \cong C^\infty(X,T\R^n) \cong
T(C^\infty(X,\R^n))$ as diffeological vector spaces over $C^\infty(X,\R^n)$.
Moreover, we have:

\begin{prop}\label{prop:cpt}
Let $X$ be a diffeological space such that its $D$-topology is compact,
and let $N$ be a smooth manifold.
Then the map $\gamma:T^H(C^\infty(X,N)) \ra C^\infty(X,TN)$ in
Proposition~\ref{prop:map} is an isomorphism of diffeological vector
spaces over $C^\infty(X,N)$.
\end{prop}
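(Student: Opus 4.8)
The plan is to reduce the statement to the already-established Euclidean case, Proposition~\ref{prop:isots}, by exhibiting $N$ as a retract of an open subset of a Euclidean space and then exploiting the naturality of $\gamma$ in the target variable. Since $\gamma$ is already known to be smooth, base-preserving and linear on each fibre by Proposition~\ref{prop:map}, and since $T^H(N)$ agrees with the usual tangent bundle $TN = T^{dvs}(N)$ for a smooth manifold, it will remain only to produce a smooth inverse that respects the projection to $C^\infty(X,N)$ and is fibrewise linear; the whole argument can be carried out at the level of morphisms of diffeological vector spaces over the relevant bases.

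First I would embed $N$ as a closed submanifold of some $\R^k$ via the Whitney embedding theorem (recall all manifolds here are finite-dimensional, Hausdorff, second countable and without boundary), and choose a tubular neighbourhood: an open set $W \subseteq \R^k$ together with a smooth retraction $\rho : W \to N$ satisfying $\rho \circ \iota = \id$, where $\iota : N \hookrightarrow W$ is the inclusion. Applying the functor $C^\infty(X,-)$ produces a retraction $\rho_* \circ \iota_* = \id$ of $C^\infty(X,N)$ onto $C^\infty(X,W)$, and applying $T^H$ together with $C^\infty(X, T^H(-))$ turns the two naturality squares for $\gamma$ (one for $\iota$, one for $\rho$) into a diagram exhibiting $\gamma_N$ as a retract of $\gamma_W$, the instance of $\gamma$ with target $W$. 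Because a retract of an isomorphism is an isomorphism, $\gamma_N$ is an isomorphism as soon as $\gamma_W$ is, with inverse $v \circ \gamma_W^{-1} \circ u'$ where $u = (\iota_*)_*$, $v = (\rho_*)_*$ and $u' = C^\infty(X, T\iota)$; this inverse is automatically smooth, base-preserving and fibrewise linear.

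It therefore suffices to treat an open subset $W \subseteq \R^k$, and this is where the compactness hypothesis enters. Here I would invoke locality. By \cite[Proposition~4.2]{CSW}, the $D$-compactness of $X$ guarantees that $C^\infty(X,W)$ is a $D$-open subset of $C^\infty(X,\R^k)$, and correspondingly $C^\infty(X, TW)$ is exactly the part of $C^\infty(X, T\R^k)$ lying over $C^\infty(X,W)$. By Proposition~\ref{prop:subandproductforbundles}(\ref{subforbundles}) the inclusion then induces an induction $T^H(C^\infty(X,W)) \hookrightarrow T^H(C^\infty(X,\R^k))$ identifying the source with its restriction over this $D$-open set. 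Since $\gamma$ is natural along the open inclusion $W \hookrightarrow \R^k$, the map $\gamma_W$ is precisely the restriction of $\gamma_{\R^k}$ over $C^\infty(X,W)$, and as $\gamma_{\R^k}$ is an isomorphism by Proposition~\ref{prop:isots}, so is $\gamma_W$.

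The main obstacle I anticipate is this locality step for $\gamma_W$: one must check that the $D$-open identification on the source (the tangent-bundle side, via Proposition~\ref{prop:subandproductforbundles}(\ref{subforbundles})) and on the target (the section-space side $C^\infty(X, T\R^k)$) are compatible with $\gamma$, so that ``restricting over $C^\infty(X,W)$'' genuinely yields the map $\gamma_W$ and not merely some map inducing the correct maps on fibres. Establishing the $D$-openness of $C^\infty(X,W)$ — the one place the hypothesis that the $D$-topology of $X$ is compact is used — and matching it cleanly with the induction statement is the technical heart of the proof; once these identifications are in place, both the retract argument and the Euclidean input are purely formal.
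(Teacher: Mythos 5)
Your proposal is correct and follows essentially the same route as the paper's proof: embed $N$ in a Euclidean space, take a tubular neighbourhood $U$ with retraction, use the compactness of the $D$-topology on $X$ together with \cite[Proposition~4.2]{CSW} to see that $C^\infty(X,U)$ is $D$-open in $C^\infty(X,\R^n)$, apply Proposition~\ref{prop:subandproductforbundles}(\ref{subforbundles}) and the corresponding induction on the section-space side to deduce that $\gamma^U$ is an isomorphism from Proposition~\ref{prop:isots}, and conclude by exhibiting $\gamma_N$ as a retract of $\gamma^U$ via naturality. The only difference is expository (you present the retract step before the locality step, the paper does the reverse), and the ``technical heart'' you flag — matching the two $D$-open identifications with $\gamma$ — is exactly what the paper handles by the functoriality of $\gamma$.
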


\begin{proof}
Let $N \ra \R^n$ be a smooth embedding,
and let $U$ be an open tubular neighborhood of $N$
with inclusions $i:N \ra U$ and $j:U \ra \R^n$,
and a smooth retraction $r:U \ra N$ such that $r \circ i=1_N$.
Since the $D$-topology on $X$ is compact,~\cite[Proposition~4.2]{CSW} implies that
$C^\infty(X,U)$ is a $D$-open subset of $C^\infty(X,\R^n)$,
and hence by Proposition~\ref{prop:subandproductforbundles}(1),
$j_*:T^H(C^\infty(X,U)) \ra T(C^\infty(X,\R^n))$ is an induction, and
for each $f \in C^\infty(X,U)$, the restriction
$j_{*,f}:T_f^H(C^\infty(X,U)) \ra T_f(C^\infty(X,\R^n))$ is an isomorphism.
It is straightforward to see that $j_*:C^\infty(X,TU) \ra C^\infty(X,T\R^n)$
is also an induction, and for each $f \in C^\infty(X,U)$,
the restriction $j_{*,f}:S_f(X,U) \ra S_f(X,\R^n)$
is an isomorphism, since $U$ is open in $\R^n$.
So by Proposition~\ref{prop:isots} and the functoriality of $\gamma$,
we know that $\gamma^U:T^H(C^\infty(X,U)) \ra C^\infty(X,TU)$ is an isomorphism
of diffeological vector spaces over $C^\infty(X,U)$.
Since $\gamma:T^H(C^\infty(X,N)) \ra C^\infty(X,TN)$ is a retract of $\gamma^U$,
it is also an isomorphism of diffeological vector spaces over $C^\infty(X,N)$.
\end{proof}

In this case, since $T^H(C^\infty(X,N))$ is
a diffeological vector space over $C^\infty(X,N)$,
it follows that $T^H(C^\infty(X,N))=T^{dvs}(C^\infty(X,N))$.

Note that, if $M$ is a smooth manifold,
then $\Diff(M)$, the set of all diffeomorphisms from $M$ to itself,
equipped with the sub-diffeology of $C^\infty(M,M)$,
is a diffeological group.
So we recover Proposition~6.3 of~\cite{HM}:

\begin{cor}\label{cor:cpt}
Let $M$ be a compact smooth manifold.
Then $T_{1_M}(\Diff(M))$ is isomorphic to the vector space of
all smooth vector fields on $M$.
\end{cor}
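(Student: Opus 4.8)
The plan is to deduce the corollary from Proposition~\ref{prop:cpt} by first reducing the computation of $T_{1_M}(\Diff(M))$ to a computation inside $C^\infty(M,M)$, and then reading off the fibre of $\gamma$ over $1_M$. The starting observation is that, taking $X = M$ (which has compact $D$-topology since $M$ is compact) and $N = M$, Proposition~\ref{prop:cpt} provides an isomorphism $\gamma : T^H(C^\infty(M,M)) \to C^\infty(M,TM)$ of diffeological vector spaces over $C^\infty(M,M)$. Restricting $\gamma$ to the fibre over $1_M$ yields an isomorphism $\gamma_{1_M} : T_{1_M}(C^\infty(M,M)) \to S_{1_M}(M,M)$, where by definition $S_{1_M}(M,M) = \{ g \in C^\infty(M,TM) \mid \pi_M \circ g = 1_M \}$. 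But this fibre is exactly the set of smooth sections of the bundle projection $\pi_M : TM \to M$, that is, the vector space of smooth vector fields on $M$, with its usual pointwise vector space structure. So the only remaining task is to identify $T_{1_M}(\Diff(M))$ with $T_{1_M}(C^\infty(M,M))$.

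This identification comes from locality together with $D$-openness. First I would verify that $\Diff(M)$ is a $D$-open subset of $C^\infty(M,M)$; granting this, Proposition~\ref{prop:internallocal} applied to the $D$-open neighbourhood $\Diff(M)$ of $1_M$ gives a natural isomorphism $T_{1_M}(\Diff(M)) \cong T_{1_M}(C^\infty(M,M))$, since $\Diff(M)$ carries the sub-diffeology of $C^\infty(M,M)$. Composing with $\gamma_{1_M}$ then produces the desired isomorphism between $T_{1_M}(\Diff(M))$ and the space of smooth vector fields on $M$, proving the corollary. Since $\gamma_{1_M}$ and the locality isomorphism are both isomorphisms of diffeological vector spaces, one in fact obtains a diffeological, not merely linear, identification, which is slightly stronger than what is stated.

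The main obstacle is the $D$-openness of $\Diff(M)$ in $C^\infty(M,M)$, and this is precisely where compactness of $M$ is used. By definition of the $D$-topology, I must show that for every plot of $C^\infty(M,M)$, corresponding by adjunction to a smooth map $H : U \times M \to M$ with $U$ open in some $\R^k$, the set $\{ u \in U \mid H(u,-) \in \Diff(M) \}$ is open. Compactness of $M$ lets one control the whole map uniformly: if $H(u_0,-)$ is a diffeomorphism, then for $u$ near $u_0$ the differential of $H(u,-)$ remains invertible at \emph{every} point of $M$ (an invertibility condition that can be checked on the compact set $M$), so $H(u,-)$ stays a local diffeomorphism, and a standard perturbation argument then shows it remains bijective, hence a diffeomorphism. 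I expect this openness statement to be the only nontrivial point; everything else is a direct application of the already-established Propositions~\ref{prop:cpt} and~\ref{prop:internallocal}.
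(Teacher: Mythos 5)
Your proposal is correct and follows essentially the same route as the paper: apply Proposition~\ref{prop:cpt} with $X = N = M$, identify the fibre $S_{1_M}(M,M)$ with the smooth vector fields, and transfer the computation from $C^\infty(M,M)$ to $\Diff(M)$ via Proposition~\ref{prop:internallocal} using the $D$-openness of $\Diff(M)$ in $C^\infty(M,M)$. The only difference is that the paper simply cites~\cite[Corollary~4.15]{CSW} for that $D$-openness, whereas you sketch a direct compactness argument for it.
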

\begin{proof}
This follows from Propositions~\ref{prop:internallocal} and~\ref{prop:cpt},
and the fact that $\Diff(M)$ is a $D$-open subset of $C^\infty(M,M)$
since $M$ is a compact smooth manifold~\cite[Corollary~4.15]{CSW}.
\end{proof}

\vspace*{10pt} %
\end{document}